\newtheorem{Thm}{Theorem}[section]
\newtheorem{Prop}[Thm]{Proposition}
\theoremstyle{definition}
\newtheorem{Def}[Thm]{Definition}
\newtheorem{Def/Thm}[Thm]{Definition/Theorem}
\newtheorem{Cor}[Thm]{Corollary}
\newtheorem{lemma}[Thm]{Lemma}
\theoremstyle{remark}
\newtheorem{Rmk}[Thm]{Remark}
\newtheorem{EG}[Thm]{Example}
\numberwithin{equation}{section}
\newcommand{\ti }{\times}
\newcommand{\ot }{\otimes}
\newcommand{\Ext}{{\mathrm{Ext}}}
\newcommand{\Hom }{{\mathrm{Hom}}}
\newcommand{\sHom}{{\mathcal{H}om}}
\newcommand{\tr }{{\mathrm{tr}}}
\newcommand{\Spec}{{\mathrm{Spec}}}
\newcommand{\cA}{{\mathcal{A}}}
\newcommand{\cO}{{\mathcal{O}}}
\newcommand{\cM}{{\mathcal{M}}}
\newcommand{\cC}{{\mathcal{C}}}
\newcommand{\cY}{{\mathcal{Y}}}
\newcommand{\cX}{{\mathcal{X}}}
\newcommand{\cZ}{{\mathcal{Z}}}
\newcommand{\HH}{{\mathbb H}}
\newcommand{\PP }{{\mathbb P}}
\newcommand{\GG }{{\mathbb G}}
\newcommand{\ZZ }{{\mathbb Z}}
\newcommand{\RR }{{\mathbb R}}
\newcommand{\ka }{{\alpha}}
\newcommand{\ch}{\mathrm{ch}}
\newcommand{\lan}{\langle}
\newcommand{\ran}{\rangle}
\newcommand{\LL}{\mathbb{L}}
\newcommand{\td}{\mathrm{td}}
\newcommand{\Ch}{\mathrm{Ch}}
\newcommand{\cB}{\mathcal{B}}
\newcommand{\End}{\mathrm{End}}
\newcommand{\MF}{\mathrm{MF}}
\newcommand{\Perf}{\mathrm{Perf}}
\newcommand{\Mod}{\mathrm{Mod}}
\newcommand{\MC}{\mathrm{MC}}
\newcommand{\vC}{\text{\rm \v{C}}}
\newcommand{\fU}{\mathfrak{U}}
\newcommand{\MFdg}{\mathrm{MF}_{dg}}
\newcommand{\tch}{\ch_{tw}}
\newcommand{\at}{\mathrm{at}}
\newcommand{\can}{\mathfrak{can}}
\newcommand{\Tr}{\mathrm{Tr}}
\newcommand{\bp}{\mathfrak{p}}
\newcommand{\oC}{\overline{C}}
 \newcommand{\rC}{\mathrm{C}}
 \newcommand{\ftw}{\mathfrak{tw}}
  \newcommand{\oa}{\overline{a}}     
  \newcommand{\oMCII}{\overline{\MC}^{II}}
\newcommand{\fc}{\mathfrak{c}}
\newcommand{\Hochp}{C'}
\newcommand{\MCp}{\MC '}
\newcommand{\otau}{\overline{\tau}}
\newcommand{\oMC}{\overline{\MC}}
\newcommand{\cD}{\mathcal{D}}
\newcommand{\dC}{d_{\scriptscriptstyle \text{\v{C}ech}}}
\newcommand{\Kos}{\mathrm{Kos}}
\newcommand{\Coh}{\mathrm{Coh}}
\begin{document}
\title[Riemann-Roch for stacky matrix factorizations]{
Riemann-Roch for stacky matrix factorizations}

\begin{abstract}
We establish a Hirzebruch-Riemann-Roch type theorem and  Grothendieck-Riemann-Roch type theorem for matrix factorizations on quotient Deligne-Mumford  stacks.
For this we first construct a Hochschild-Kostant-Rosenberg type isomorphism explicit enough to yield 
a categorical Chern character formula. We next find an expression of the canonical pairing of Shklyarov
under the isomorphism. 
\end{abstract}  

\author[D. Choa]{Dongwook Choa}
\address{Korea Institute for Advanced Study\\
85 Hoegiro, Dongdaemun-gu \\
Seoul 02455\\
Republic of Korea}
\email{dwchoa@kias.re.kr}

\author[B. Kim]{Bumsig Kim$^*$}
\address{Korea Institute for Advanced Study\\
85 Hoegiro, Dongdaemun-gu \\
Seoul 02455\\
Republic of Korea}

\author[B. Sreedhar]{Bhamidi Sreedhar}
\address{Center for Geometry and Physics\\ Institute for Basic Science (IBS)\\ Pohang 37673\\ Republic of Korea}
\email{sreedhar@ibs.re.kr}
\date{\today}

\thanks{$*$: The second author, Professor Bumsig Kim (1968-2021), sadly passed away during the final stages of the preparation of  this manuscript. 
The first and third authors were both postdocs under his mentorship at KIAS when this project started. We thank Prof. Bumsig Kim  
for his mathematical generosity and kindness. We shall eternally be grateful for the opportunity to have known him not only as a great mathematician but also as a wonderful human being.}

\subjclass[2020]{Primary 14A22; Secondary 16E40, 18G80}

\keywords{Stacky Matrix factorizations, Cyclic Homology, Categorical Chern characters, Hirzebruch-Riemann-Roch,  Hochschild homology}

\maketitle

\tableofcontents

\section{Introduction}

\subsection{Main results}
Let $k$ be an algebraically closed field of characteristic zero. The main interest of this paper is an {\em LG model}, $(\cX, w)$, where $\cX$ is a smooth separated Deligne-Mumford stack of finite type over $k$ and a regular function $w$ with no other critical values but zero. 

By a matrix factorization for $(\cX, w)$ we mean a pair $(P, \delta _P)$ of a locally free coherent $\GG$-graded sheaf $P$ on $\cX$ and a curved differential $\delta _P$ whose 
square is $w\cdot \mathrm{id}_P$. Here $\GG$ can be either the group $\ZZ$ or $\ZZ/2$ depending on $w$.
There is the notion of the coderived category of matrix factorizations $\mathrm{D}\MF(\cX, w)$  and its dg enhancement defined as
 the dg quotient of the dg category of matrix factorizations by the subcategory 
 of coacyclic or equivalently locally contractible matrix factorizations. Later we will introduce its dg-enhancement denoted by $\MFdg(\cX, w)$; see \cite{EP, PV: Sing} and also Definition~\ref{def: Cech model}.

\subsubsection{HKR and a Chern character formula} Associated to the dg category $\MFdg (\cX, w)$ there is  the so-called mixed Hochschild chain complex 
\[ \MC(\MFdg (\cX, w)).\]
It has been expected that  $\MC(\MFdg (\cX, w))$ should be quasi-isomorphic to the $dw$-twisted de Rham mixed complex $(\Omega ^{\bullet}_{I\cX}, -dw |_{I\cX} , d)$ 
of the inertia stack $I\cX$ of $\cX$. However only particular cases have been proven so far. In this paper we verify that the expectation is indeed true. 

We first introduce some notation. Let $\rho _{\cX} : I\cX \to \cX$ be the natural forgetful morphism. Write 
$P|_{I\cX}$ for $\rho ^* _{\cX} P$ and let  $\can _{P|_{I\cX}}$ be the canonical automorphism of $P|_{I\cX}$; see \S~\ref{sec: canonical auto}.
Let $\tr$ denote the supertrace morphism 
\[ \RR \Hom (P|_{I\cX}, P|_{I\cX} \ot (\Omega ^{\bullet}_{I\cX}, -dw|_{I\cX} )) \to \HH ^*(I\cX, ( \Omega ^{\bullet}_{I\cX}, -dw|_{I\cX} )) \]
and let  $\hat{\at} (P|_{I\cX}) $ denote  the Atiyah class of the matrix factorization $P|_{I\cX}$ for $(I\cX, w|_{I\cX})$; see \cite{FK, KP, KP2} and \S~\ref{sub: Atiyah}.

\begin{Thm}\label{thm: HKR and Ch}
Suppose $\cX$ is smooth and has the resolution property. Then there is an isomorphism 
\[ \MC (\MFdg (\cX, w)) \cong \RR\Gamma (\Omega ^{\bullet}_{I\cX}, -dw|_{I\cX} , d)  \]
in the derived category of mixed complexes.
Under the isomorphism the Hochschild homology valued  Chern character $\ch _{HH} (P)$ is 
representable by 
\[ \tr \big( \can _{P|_{I\cX}} \exp ( \hat{\at} (P |_{I\cX} )) \big)  \] 
in $\HH ^* ( I\cX, (\Omega ^{\bullet}_{I\cX}, -dw|_{I\cX} ) )$ after the appropriate sense  of the exponential  operation $\exp$ is taken into account; see \S~\ref{sub: Atiyah}. 
\end{Thm}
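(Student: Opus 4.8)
The plan is to reduce the statement to an affine (or rather, local-model) computation glued via Čech techniques, following the now-standard strategy for matrix-factorization categories and their Hochschild invariants. First I would set up the local model: since $\cX$ has the resolution property, write $\cX = [U/G]$ for a smooth affine scheme $U$ with an action of a linearly reductive group $G$, so that $\MFdg(\cX,w)$ is computed by a category of $G$-equivariant matrix factorizations on $U$. In the local model the inertia stack decomposes as $I\cX = \coprod_{[g]} [U^g/C_G(g)]$ over conjugacy classes, and the twisted de Rham complex $(\Omega^\bullet_{I\cX}, -dw|_{I\cX}, d)$ decomposes accordingly. For the local building block I would invoke (or recall the proof of) the HKR-type isomorphism for matrix factorizations: the mixed Hochschild complex of the dg category of matrix factorizations on an affine $V$ with potential $f$ is quasi-isomorphic to the Koszul-type resolution computing $(\Omega^\bullet_V, -df, d)$, with the equivariant/orbifold sectors producing exactly the $U^g$-contributions and the $\can$-twist accounting for the $g$-action on the fibers. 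This is where the explicit chain-level maps (the HKR map $I_{\mathrm{HKR}}$ and its Čech-twisted refinement, cf. the \texttt{PerfC}/\texttt{ThC} formalism referenced in the paper) must be written down carefully enough to be compatible with the mixed (i.e.\ $S^1$-equivariant) structure.

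Second, I would globalize. Choose an affine étale (or Zariski, given the resolution property) hypercover of $\cX$ adapted to a presentation, and assemble the local HKR quasi-isomorphisms into a morphism of Čech double complexes; the twisted de Rham side becomes $\RR\Gamma$ of the sheafified twisted de Rham complex on $I\cX$, and the Hochschild side becomes $\MC(\MFdg(\cX,w))$ by descent for the mixed Hochschild complex of the dg category. The compatibility of the local maps on overlaps is the content of the Čech-twisted construction; one checks that the transition data (which a priori only matches up to homotopy) can be rigidified, or else that the resulting map of complexes is still a quasi-isomorphism by a spectral-sequence argument on the Čech filtration, the $E_1$-page comparison being exactly the local statement just established.

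Third, for the Chern character formula, I would track $\ch_{HH}(P)$ through this chain of quasi-isomorphisms. By general nonsense the categorical Chern character of an object $P$ of a dg category is the class of $\mathrm{id}_P$ under the trace/Hochschild map $\End(P)\to \Hoch(\MFdg(\cX,w))$; restricting to $I\cX$ and applying the local HKR identification sends $\mathrm{id}_P$ to the supertrace of the operator obtained by exponentiating the Atiyah class, with the canonical automorphism $\can_{P|_{I\cX}}$ appearing because the Hochschild-homology class on the twisted sector $[U^g/\cdots]$ is computed by the $g$-twisted trace on fibers. Concretely, I would use the Atiyah-class description of $\ch_{HH}$ already available in the matrix-factorization setting (the cited \cite{FK, KP, KP2}), push it through the HKR isomorphism, and verify that the "appropriate sense of $\exp$" — namely, that $\hat{\at}(P|_{I\cX})$ is a curved/twisted Atiyah class whose exponential must be interpreted inside the $(-dw)$-twisted complex so that $\exp$ terminates or converges appropriately — is precisely the one forced by the chain-level formulas.

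The main obstacle I expect is the globalization step, specifically making the local HKR quasi-isomorphisms strictly compatible on overlaps at the level of mixed complexes: the HKR map depends on auxiliary choices (a connection, a splitting of the first-order neighborhood, local coordinates) and on a triple overlap these choices differ, so the naive Čech assembly only yields a morphism in the homotopy category, not an honest map of mixed complexes. Resolving this requires either the homotopy-coherent (twisted complex / $A_\infty$) bookkeeping that the paper's \v{C}ech-model machinery is presumably designed to handle, or an argument that the ambiguity is itself $S^1$-homotopically trivial; in either case the bookkeeping needed to preserve the mixed structure through the gluing is the technical heart of the proof. A secondary difficulty is pinning down the orbifold/inertia contributions: verifying that the Hochschild complex of equivariant matrix factorizations on $[U/G]$ localizes onto the fixed loci $U^g$ with the correct normalization (the $\can$-twist and any sign/orientation factors) demands a careful equivariant HKR computation, but this is local and should follow the template of the untwisted orbifold HKR theorem.
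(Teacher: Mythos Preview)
Your proposal has the right overall shape but contains a genuine gap in both the setup and the globalization strategy, and the paper's actual argument circumvents precisely the difficulty you flag.

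First, a small but real error in the setup: the resolution property does \emph{not} give you a global presentation $\cX=[U/G]$ with $U$ affine and $G$ linearly reductive. In the paper's Theorem~\ref{thm: HKR and Ch} no global-quotient hypothesis is assumed; the reduction to a local model is done \'etale-locally on the \emph{coarse moduli space} $\underline{\cX}$, where a separated DM stack is of the form $[\Spec A/G]$ for a \emph{finite} group $G$. Your linearly-reductive presentation is neither available nor needed here.

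The substantive gap is in the globalization. You propose to produce local HKR quasi-isomorphisms and then glue them over a cover, and you correctly identify that such local maps depend on choices (connections, coordinates) so that strict compatibility on overlaps is the crux. The paper does not attempt this gluing at all. Instead it writes down a \emph{single global chain map}
\[
\tau:\ \Hochp(q\MFdg(\cX,w))\xrightarrow{\rho_{\cX}^*}\Hochp(q\MFdg(I\cX,w))\xrightarrow{\ftw}\Hochp(q\MFdg(I\cX,w))\xrightarrow{nat}\Gamma(\cX,\Hochp(q\MF_{I\cX,w}(-)))
\]
(and a mixed-complex refinement $\tau_m$ using the eigen-decomposition under $\can$), and then checks that this globally defined map is a quasi-isomorphism when restricted to each local patch $[\Spec A/G]$. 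The local check is Lemma~\ref{lem: key hkr}: one passes, by quasi-Morita invariance for Hochschild complexes of the \emph{second kind}, from $q\MFdg^G(A,w)$ to the single object $(A\rtimes G,-w\otimes 1)$, and verifies that $\tau$ restricted there factors Baranovsky's quasi-isomorphism $\psi_A$ through a generalized trace. The insertion of $\ftw$ (cap product with $[\can]$) is exactly what makes this triangle commute; without it the map would not even be a quasi-isomorphism on $[\Spec k/G]$. Since $\tau_m$ is a single globally defined morphism of mixed complexes, there is nothing to glue and the overlap-compatibility problem you worried about never arises; the \'etale-local statement then globalizes by sheafifying over $\underline{\cX}$ and invoking that $V\mapsto \oC(\MFdg(\cX_V,w))$ is already a sheaf (Lemma~\ref{lem:HKRI}, using \cite{H-LP-Arxiv, CKK1}).

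For the Chern character, the paper again avoids tracking through an abstract zig-zag by writing down a second explicit global chain map $\tr_{\nabla,I\cX}\circ\ftw\circ\rho_{\cX}^*$ built from a fixed choice of connections on the \v{C}ech resolution; this sends $1_P$ directly to the \v{C}ech cocycle $\tr(\can_{P|_{I\cX}}\exp(-[\nabla,\delta+\dC]))$, which is then identified with the Atiyah-class expression. Your description of this step is roughly right in outline, but note that it works because the map is again global and explicit, not because one can trace $1_P$ through a chain of local identifications.

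In summary: your plan would require solving the homotopy-coherent gluing problem you yourself singled out, and it is not clear you could; the paper sidesteps it entirely by constructing the comparison map globally (the $\ftw$-twisted pullback to the inertia stack) and reducing the quasi-isomorphism check to a local computation via quasi-Morita invariance of type-II Hochschild complexes.
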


The  history of related works are very rich. Here we mention  only  the case of  stacky matrix factorizations.
In the local case   Theorem \ref{thm: HKR and Ch} is proven by Polishchuk and Vaintrob \cite{PV: HRR}.
There are works of C\u ald\u arau, Tu and Segal \cite{CT, Se} for HKR type isomorphisms in affine cases with a finite group action. The paper \cite{BFK: Kernels} of
Ballard, Favero, and Katzarkov shows a HKR type isomorphism for the graded cases on linear spaces.  
 This result has also been obtained by Halpern-Leistner and Pomerleano \cite[Remark 3.20]{H-LP} and \cite[Corollary 4.6]{H-LP-Arxiv}. We note that there is a difference in the map constructed in the current text and \cite{H-LP-Arxiv} (see \eqref{eqn: tau} and Remark \ref{rem:commutative} ).  Theorem \ref{thm: HKR and Ch}  is proven by  Kuerak Chung, Taejung Kim, and the second author  in \cite{CKK1}, when one considers quotient stacks of the form $[X/G]$ where $X$ is a smooth variety with a finite group action. 
 In \cite{KP2} a HKR type isomorphism and Chern character formula including the case for the graded matrix factorizations are obtained by the universal Atiyah class.

\subsubsection{HRR and GRR} 
Further assume that the smooth separated DM stack $\cX$ is a stack quotient of a smooth variety by an action of an affine algebraic group and
the critical locus of $w$ is proper over $k$. When $\GG = \ZZ/2$, we shall further assume that the morphism $w: \cX \to \mathbb{A}^1_k$ is flat.
We call the pair $(\cX, w)$ a proper LG model.
We define the Euler characteristic $\chi (P, Q)$ of the pair $(P, Q)$ by the alternating sum of the dimensions of  higher sheaf cohomology:
 \[ \chi (P, Q) := \sum _{i \in \GG}  (-1)^i \dim \RR ^i \Hom (P, Q). \]
For a vector bundle $E$ on $I\cX$ let $\at (E)\in \Ext ^1 (E, E \ot \Omega ^1_{I\cX})$ denote the usual Atiyah class of $E$.
Let   \[ \ch _{tw} (E) := \tr (\can _E \exp (\at (E)) \in \HH ^* ( I\cX, (\Omega ^{\bullet}_{I\cX}, 0 ) )  . \]
For a virtual vector bundle $E$ define $\ch_{tw}(E)$ by linearity. 
We define the Todd class $\td  (T_{I\cX})$ of $T_{I\cX}$ by the formulation of Todd class in terms of the Chern character $\ch _{tw} (T_{I\cX})$; see \S~\ref{sub: Atiyah}.

\begin{Thm} \label{thm: HRR}
Let $P^{\vee}$  denote   the  matrix factorization $(P^{\vee}, \delta _P ^{\vee})$  for $(\cX, -w)$ dual to $(P, \delta _P)$,
let $ N_{I\cX / \cX}$ denote the normal vector bundle of $I\cX$ to $\cX$ via  $\rho _{\cX}$, let $\dim _{I\cX}$ be the locally constant function for local dimensions of $I\cX$,
and let $\lambda _{-1} ( N_{I\cX/\cX}^{\vee})$ be the alternating sum of exterior powers of $N_{I\cX/\cX}^{\vee}$. Then 
\begin{equation} \chi (P, Q) =
\int _{I\cX} (-1)^{\binom{\dim _{I\cX}+1}{2}}  \ch _{HH} (Q) \wedge \ch _{HH} (P ^{\vee})  \wedge \frac{\td  (T_{I\cX})}{\tch (\lambda _{-1} ( N^{\vee}_{I\cX / \cX}))}. \end{equation}
Here RHS is the composition of the following two operations:
\begin{align*}  & \begin{multlined}[b] 
\wedge  : \   \HH ^{*} (I\cX, (\Omega ^{\bullet} _{I\cX}, -dw|_{I\cX} )) \ot \HH ^{*} (I\cX, (\Omega ^{\bullet} _{I\cX}, dw|_{I\cX} )) 
\ot  \HH^{*}(I\cX, (\Omega ^{\bullet} _{I\cX}, 0))  \\  \to  \oplus _p H^{*} _c(I\cX, \Omega ^p _{I\cX}[p]) ; \end{multlined} \\
&  \int_{I\cX} : \ \oplus _{p\in \ZZ} H^{*} _c(I\cX, \Omega ^p _{I\cX}[p]) \xrightarrow{projection}  H^0_c (I\cX, \Omega ^n _{I\cX} [n]) \xrightarrow{\tr _{I\cX}} k .
\end{align*} 
\end{Thm}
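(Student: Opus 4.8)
The plan is to combine a general categorical Riemann--Roch principle with an explicit description of Shklyarov's canonical pairing under the Hochschild--Kostant--Rosenberg isomorphism of Theorem~\ref{thm: HKR and Ch}. First I would record that, under the proper LG model hypotheses, $\MFdg(\cX,w)$ is a smooth and proper dg category over $k$: smoothness because $\cX$ is a smooth quotient stack with the resolution property, so the diagonal bimodule is perfect; properness --- finiteness of $\RR\Hom(P,Q)$, which is what makes $\chi(P,Q)$ well defined --- because $\mathrm{crit}(w)$ is proper, with $w$ flat when $\GG=\ZZ/2$. Moreover $P\mapsto(P^{\vee},\delta_P^{\vee})$ is an equivalence $\MFdg(\cX,w)^{\mathrm{op}}\xrightarrow{\ \sim\ }\MFdg(\cX,-w)$. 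By Shklyarov's Riemann--Roch theorem for smooth proper dg categories the Euler pairing is computed by the canonical (Mukai-type) pairing on Hochschild homology,
\[ \chi(P,Q)\;=\;\langle\,\ch_{HH}(Q),\ \ch_{HH}(P^{\vee})\,\rangle_{\mathrm{can}}, \]
where $\langle-,-\rangle_{\mathrm{can}}$ pairs the Hochschild homology of $\MFdg(\cX,w)$ with that of $\MFdg(\cX,-w)$ and is the composite of the K\"unneth map with the evaluation determined by the (perfect) diagonal bimodule.

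Next, using Theorem~\ref{thm: HKR and Ch} I would replace the Hochschild homologies of $\MFdg(\cX,\pm w)$ by $\HH^*(I\cX,(\Omega^{\bullet}_{I\cX},\mp dw|_{I\cX}))$ and $\ch_{HH}$ by the Atiyah-class representative $\tr(\can\exp(\hat{\at}))$. After this substitution the theorem is equivalent to the single assertion that $\langle-,-\rangle_{\mathrm{can}}$, transported through the HKR isomorphism, equals
\[ (\alpha,\beta)\;\longmapsto\;\int_{I\cX}(-1)^{\binom{\dim_{I\cX}+1}{2}}\,\alpha\wedge\beta\wedge\frac{\td(T_{I\cX})}{\tch(\lambda_{-1}(N^{\vee}_{I\cX/\cX}))}. \]
This is the "expression of the canonical pairing of Shklyarov under the isomorphism" referred to in the introduction, and it is the substantive part of the proof.

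To verify that identity I would unwind Shklyarov's construction of $\langle-,-\rangle_{\mathrm{can}}$. By an external-product (Thom--Sebastiani type) functor $\MFdg(\cX,w)\ot\MFdg(\cX,-w)\to\MFdg(\cX\times\cX,\,p_1^*w-p_2^*w)$ the K\"unneth map becomes the exterior product of $dw$-twisted Hochschild chains, and $\langle-,-\rangle_{\mathrm{can}}$ becomes the composite of that exterior product with restriction to the diagonal $\Delta_{\cX}\subset\cX\times\cX$, on which $p_1^*w-p_2^*w$ vanishes, followed by the trace $\int_{\cX}$ on $\Perf(\cX)$. Passing to HKR on the three categories: on $I(\cX\times\cX)=I\cX\times I\cX$ the exterior product of chains becomes, up to homotopy, the $\wedge$-product of twisted de Rham forms corrected by a Todd factor --- the failure of HKR to intertwine the K\"unneth map with the wedge product, in the spirit of the work of Kontsevich and C\u ald\u araru--Willerton on the Mukai pairing; the diagonal restriction becomes pushforward along $I\cX\hookrightarrow I\cX\times I\cX$, whose excess normal bundle along the diagonal component is $T_{I\cX}$; and the discrepancy between $\cX$ and its inertia stack produces the factor $\lambda_{-1}(N^{\vee}_{I\cX/\cX})$, exactly as in To\"en's Grothendieck--Riemann--Roch for Deligne--Mumford stacks but now in the $dw$-twisted setting, the compact supports coming from properness of $\mathrm{crit}(w|_{I\cX})$. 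Collecting the Todd contributions from the exterior product and from the excess intersection, and tracking the de Rham duality signs on the disconnected, non-equidimensional components of $I\cX$ --- which is where $(-1)^{\binom{\dim_{I\cX}+1}{2}}$ comes from --- one arrives at the displayed pairing; rewriting in terms of $\ch_{HH}$ and the definition of $\td(T_{I\cX})$ out of $\tch(T_{I\cX})$ then gives the theorem.

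The hard part will be this last step: showing that the \emph{explicit} HKR isomorphism of Theorem~\ref{thm: HKR and Ch} intertwines the K\"unneth map and the diagonal-restriction/trace up to precisely the Todd-class corrections indicated above. Here the Atiyah-class level control supplied by Theorem~\ref{thm: HKR and Ch}, rather than an abstract quasi-isomorphism, is what makes the comparison manageable; one also needs a careful identification of the inertia stacks of $\cX$, $\cX\times\cX$ and the diagonal, and a somewhat delicate analysis of signs and of the compact-support conventions. Once these compatibilities are established, assembling the pieces is routine.
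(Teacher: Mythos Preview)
Your high-level strategy---Shklyarov's categorical HRR reduces everything to identifying the canonical pairing $\langle-,-\rangle_{\mathrm{can}}$ under the HKR isomorphism of Theorem~\ref{thm: HKR and Ch}---is exactly the paper's. The divergence is entirely in how that identification (Theorem~\ref{thm: can pair}) is carried out. You propose to trace the pairing directly through HKR, invoking Todd-type corrections \`a la C\u ald\u araru--Willerton for the compatibility of HKR with K\"unneth and with diagonal restriction; the paper instead exploits a \emph{characterization} of $\langle-,-\rangle_{\mathrm{can}}$ (Corollary~\ref{cor: char prop}): it is the unique nondegenerate pairing satisfying the diagonal-decomposition identity $\sum_i\langle\gamma,T^i\rangle\langle T_i,\gamma'\rangle=\langle\gamma,\gamma'\rangle$, where $\sum_i T^i\otimes T_i=\ch_{HH}(\Delta_*\cO_{\cX})$ under K\"unneth. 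Nondegeneracy of the candidate formula is Serre duality on $I\cX$; the diagonal-decomposition identity is then verified by \emph{deformation to the normal cone} of $\Delta:\cX\hookrightarrow\cX^2$, which replaces $\Delta_*\cO_{\cX}$ by the zero section of $T_{\cX}$ and hence by an explicit Koszul complex on $T_{\cX}$. The splitting $\rho_{\cX}^*T_{\cX}\cong T_{I\cX}\oplus N_{I\cX/\cX}$ from \S\ref{sub: TI IT} then makes the factor $\td(T_{I\cX})/\tch(\lambda_{-1}(N^{\vee}_{I\cX/\cX}))$ and the sign $(-1)^{\binom{\dim_{I\cX}+1}{2}}$ fall out of the elementary Koszul computation in \S\ref{subsub: comp}.

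This bypasses precisely the ``hard part'' you flag: one never has to compare the HKR map with K\"unneth or with diagonal pullback in the abstract, one only has to compute a single Chern character, and deformation to the normal cone is the concrete device that makes that computation tractable. Your direct-unwinding route is a reasonable heuristic, but making the $dw$-twisted Mukai-pairing corrections precise on a DM stack is not done anywhere in the paper and would require substantial independent work; the characterization-plus-deformation argument is what replaces it.
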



Let $(\cY, v)$ be another proper LG model. Consider a proper morphism $f: \cX \to \cY$ with $f^* v = w$.
Let $K_0 (\cA)$ be the Grothendieck group of the homotopy category of a pretriangulated dg category $\cA$.
Let $f_! : K_0 (\MFdg (\cX, w) ) \to K_0 (\MFdg (\cY , v) ) $ be the homomorphism induced by $\RR f_*$ on the Grothendieck groups 
; see \cite[\S~2]{CFGKS}. 
Let $\widetilde{\td} (T_{If}) : = \widetilde{\td} (T_{I\cX })  / If^* \widetilde{\td} (T_{I\cY})$ where
\[  \widetilde{\td} (T_{I\cX}) = \frac{\td (T_{I\cX})}{\tch (\lambda _{-1} ( N_{I\cX/\cX}^{\vee}))} .\]
Let $\dim_{If}$ be the function on $I\cX$ for relative local dimensions of $If : I\cX \to I\cY$
and let 
\[ \int _{If} : \HH ^* (I\cX, (\Omega ^{\bullet}_{I\cX} , - dw)) \to \HH ^* (I\cY , (\Omega ^{\bullet}_{I\cY} , - dv)) \] be the pushforward; see \S~\ref{subsub: basic prop}.

\begin{Thm}\label{Thm: GRR} {\em (=Theorem~\ref{Thm: GRR body})}
The following diagram is commutative:
\begin{equation*} \xymatrix{ K_0 (\MFdg (\cX, w) ) \ar[d]_{\ch_{HH}} \ar[rrr]^{f_{!}} & & & K_0 (\MFdg (\cY , v )) \ar[d]^{\ch_{HH}} \\ 
                                            \HH ^{0} (I\cX, (\Omega ^{\bullet}_{I\cX}, -dw|_{I\cX})) \ar[rrr]_{  \int _{If}  (-1)^{\dim_{If}} \cdot \wedge \widetilde{\td} (T_{If}) } & & 
                                            &  \HH ^{0} (I\cY, (\Omega ^{\bullet}_{I\cY}, -dv|_{I\cY})) . } \end{equation*}
\end{Thm}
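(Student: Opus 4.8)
The plan is to deduce the commutativity of the diagram from Theorem~\ref{thm: HRR} together with the functoriality and non-degeneracy of Shklyarov's canonical pairing. The first reduction is formal. The Hochschild Chern character is natural with respect to functors of pretriangulated dg categories: for any such functor $\Phi\colon\cA\to\cB$ and any object $E$ of $\cA$ one has $\ch_{HH}(\Phi(E))=\Phi_{*}\ch_{HH}(E)$, where $\Phi_{*}$ is the induced map on Hochschild homology. Applying this to $\Phi=\RR f_{*}\colon\MFdg(\cX,w)\to\MFdg(\cY,v)$, the functor inducing $f_{!}$ on $K_{0}$ (see \cite{CFGKS}), and using the isomorphisms of Theorem~\ref{thm: HKR and Ch} on $\cX$ and on $\cY$ (legitimate, since a quotient of a smooth variety by an affine algebraic group has the resolution property) to identify the two Hochschild homologies with the twisted de Rham hypercohomologies of $I\cX$ and $I\cY$, the commutativity of the diagram becomes the single identity
\[
(\RR f_{*})_{*}\ =\ \int_{If}(-1)^{\dim_{If}}(-)\wedge\widetilde{\td}(T_{If})
\]
of maps $\HH^{*}(I\cX,(\Omega^{\bullet}_{I\cX},-dw|_{I\cX}))\to\HH^{*}(I\cY,(\Omega^{\bullet}_{I\cY},-dv|_{I\cY}))$. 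We shall prove this identity on all of Hochschild homology, which in particular covers the image of $\ch_{HH}$ and hence gives the diagram.

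For the identity of maps we pair against the target. For a proper LG model the dg category $\MFdg$ is smooth and proper, so Shklyarov's canonical pairing $\langle-,-\rangle_{\cY}$ is non-degenerate on Hochschild homology; hence it is enough to show, for all $\alpha$ on $I\cX$ and all $\beta$ on $I\cY$, that
\[
\big\langle(\RR f_{*})_{*}\alpha,\ \beta\big\rangle_{\cY}=\Big\langle\int_{If}(-1)^{\dim_{If}}\alpha\wedge\widetilde{\td}(T_{If}),\ \beta\Big\rangle_{\cY}.
\]
On the left we invoke the compatibility of the canonical pairing with the adjunction between $\RR f_{*}$ and its right adjoint $f^{!}$, which gives $\langle(\RR f_{*})_{*}\alpha,\beta\rangle_{\cY}=\langle\alpha,(f^{!})_{*}\beta\rangle_{\cX}$. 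Since $\cX$ and $\cY$ are smooth, $f$ is a local complete intersection morphism, so $f^{!}(-)=\LL f^{*}(-)\otimes\omega_{f}[\dim_{f}]$ with $\omega_{f}$ the relative dualizing sheaf; combined with the compatibility of the HKR isomorphisms with the pullback functor, this expresses $(f^{!})_{*}\beta$ on $I\cX$ in terms of $If^{*}\beta$, the Chern character of $\omega_{f}$, and a sign. The expression for Shklyarov's pairing established in the course of proving Theorem~\ref{thm: HRR} now rewrites both sides as integrals over $I\cX$: the left one directly, and the right one after applying the projection formula for $\int_{If}$, the identity $\int_{I\cX}=\int_{I\cY}\circ\int_{If}$, and the multiplicativity $\widetilde{\td}(T_{I\cX})=\widetilde{\td}(T_{If})\wedge If^{*}\widetilde{\td}(T_{I\cY})$. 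Comparing the two integrands, the desired equality reduces to a pointwise identity on $I\cX$ relating $\widetilde{\td}(T_{I\cX})$, $If^{*}\widetilde{\td}(T_{I\cY})$, the Chern character of $\omega_{f}$ together with its inertia-stack correction by $N_{I\cX/\cX}$, and the sign $(-1)^{\dim_{If}}$ --- in essence the Serre-duality symmetry of the (twisted) Todd class --- which holds by a classical computation.

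The heart of the argument, and the step I expect to be the main obstacle, is this second one: establishing, in the curved and stacky setting, both the non-degeneracy of Shklyarov's canonical pairing for a proper LG model and, above all, its compatibility with $\RR f_{*}$ and its adjoint $f^{!}$, together with a description of $(f^{!})_{*}$ on twisted de Rham cohomology of the inertia stacks explicit enough to be matched against $\widetilde{\td}(T_{If})$; this is where the genuinely new input beyond Theorems~\ref{thm: HKR and Ch} and \ref{thm: HRR} is needed. A parallel route, which avoids the functoriality of the pairing, is to realize $\RR f_{*}$ as the Fourier--Mukai functor whose kernel is the pushforward of the structure sheaf of the graph $\Gamma_{f}$ (an honest sheaf on $\cX\times\cY$, since the potential restricts to zero on $\Gamma_{f}$), compute its Hochschild Chern character via the formula in Theorem~\ref{thm: HKR and Ch} and classical Grothendieck--Riemann--Roch for the closed immersion $I\Gamma_{f}$, and convolve; in that route the obstacles become the compatibility of a K\"unneth decomposition of twisted de Rham cohomology with the particular HKR map of Theorem~\ref{thm: HKR and Ch} (which, as noted after that theorem, differs from other constructions in the literature) and the local Koszul computation of the kernel's character on the inertia stack. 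Either way, the final and most error-prone task is to reconcile all signs and shifts --- in particular the quadratic signs $(-1)^{\binom{\dim_{I\cX}+1}{2}}$ and $(-1)^{\binom{\dim_{I\cY}+1}{2}}$ of Theorem~\ref{thm: HRR} with the relative sign $(-1)^{\dim_{If}}$ appearing in the statement.
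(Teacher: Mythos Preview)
Your initial reduction---naturality of the Hochschild Chern character under dg functors, reducing the problem to identifying $(\RR f_*)_*$ with the explicit pushforward formula, and then testing this against the non-degenerate canonical pairing on the target---is exactly how the paper proceeds. However, the paper takes what you call the ``parallel route,'' not your primary adjunction route, and the difference matters.

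Your primary route rests on the identity $\langle(\RR f_*)_*\alpha,\beta\rangle_{\cY}=\langle\alpha,(f^!)_*\beta\rangle_{\cX}$ at the level of Hochschild homology, followed by an explicit description of $(f^!)_*$ under the HKR isomorphism via $f^!=\LL f^*(-)\otimes\omega_f[\dim_f]$. Neither step is available in the paper: the Hochschild-level adjunction compatibility is not established (and proving it in the curved stacky setting would itself require input of roughly GRR type), and computing how tensoring by $\omega_f[\dim_f]$ acts on twisted de Rham cohomology of the inertia stack is not treated. You flag these as obstacles, correctly; they are genuine gaps, not routine bookkeeping.

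The paper's argument bypasses them entirely. It realizes $\RR f_*$ as the Fourier--Mukai transform with kernel $\cO_{\Gamma_f}$, then invokes the bimodule formula $HH(T_M)(\sigma)=\sum_i\langle\sigma,\gamma_i\rangle_{can}\gamma^i$ (Proposition~\ref{prop: TM}) together with the explicit form of the canonical pairing (Theorem~\ref{thm: can pair}). Writing $\ch_{HH}(\cO_{\Gamma_f})=\sum_i T^i\otimes T_i$ under K\"unneth, this expresses $\int_{I\cY}\alpha'\wedge\beta\wedge\widetilde{\td}(T_{I\cY})$ as a product of integrals over $I\cX$ and $I\cY$. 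The key computation is then the deformation to the normal cone for the graph immersion $\Gamma_f:\cX\to\cX\times\cY$: this replaces $\ch_{HH}(\cO_{\Gamma_f})$ by the Chern character of the Koszul complex of the diagonal section on $If^*T_{I\cY}$, and the local Koszul computation of \S~\ref{subsub: comp} collapses everything to $\int_{I\cX}\alpha\wedge If^*\beta\wedge\widetilde{\td}(T_{I\cX})$, which by the projection formula and functoriality of $\int$ is the desired right-hand side. So the K\"unneth compatibility and the Koszul calculation you list as obstacles for the parallel route are in fact already handled by the machinery set up for Theorem~\ref{thm: can pair}; no new ingredient beyond Proposition~\ref{prop: TM} is needed.
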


 When $\GG = \ZZ$ (and hence $w=0$), various versions of Riemann-Roch theorem on DM stacks are proven by
Kawasaki \cite{Kaw}, To\"en \cite{Toen}; and Edidin and Graham \cite{EG1, EG2, EG3}.
When $w$ has one critical point, Theorem \ref{thm: HRR} is proven by Polishchuk and Vaintrob \cite{PV: HRR}.

\subsection{On the proofs and pertinent works}

\subsubsection{HKR} For the computation of Hochschild homology of the category of matrix factorizations, 
 there are at least  three known approaches by
(1) finding a suitable flat resolution of the diagonal module \cite{BFK: Kernels, LP, PV: HRR}, (2) using the quasi-Morita equivalence \cite{BW, CT, CKK1, Ef, Se}, and
(3) using the universal Atiyah classes \cite{KP, KP2} which goes back to \cite{Cal, Ma}. 
In this paper we take the second approach  by achieving a correct globalization of Baranovsky's map \cite{Bar}  closely following  the proof of Proposition 2.13 of \cite{H-LP} and \cite[Corollary 4.6]{H-LP-Arxiv}.
Combining this with a chain-level map from \cite{BW, CKK1} we obtain a boundary-bulk map formula as well as a Chern character formula; see \S~\ref{sec: Chern char form}.

\subsubsection{HRR}
For any proper smooth dg category $\cA$ there is a categorical HRR theorem by Shklyarov \cite{Shk: HRR}. Let $\cA ^{op}$ denote the opposite category
of $\cA$.
Let  $\lan , \ran _{can}$ be the canonical pairing of Shklyarov:
\[ \lan , \ran _{can} : HH_* (\cA) \ot HH_* (\cA ^{op}) \to k . \]
Then the categorical HRR theorem is the equality 
\[ \chi (P, Q) = \lan \Ch_{HH} (Q),  \Ch_{HH} (P^{\vee}) \ran _{can} \ \forall P, Q \in \cA.\]

There is a characterization property of the canonical pairing in terms of the Chern character of diagonal bimodule; see for example \S~\ref{sub: char prop}. 
Let $\cA$ be the dg category of matrix factorizations for $(\cX, w)$
localized by coacyclic matrix factorizations.
When $\cX$ is local, using the characterization property 
Polishchuk and Vaintrob \cite{PV: HRR} show
that the canonical pairing becomes up to sign the residue pairing under their HKR type isomorphism. 
In the nonstacky local case, there is also a work of Brown and Walker \cite{BW: conj} identifying the canonical pairing with the residue pairing
under the HKR type isomorphism.  

When $\cX$ is a smooth variety, using the deformation to the normal cone as well as the characterization property of 
the canonical pairing, the second author \cite{Kim} shows that
 the canonical pairing becomes a trace map under the HKR type isomorphism up to a Todd correction term. 
 When $\cX$ is stacky, furthermore using the deformation to the normal cone for {\em local immersions} \cite{Kresch: cycle, Vistoli}
 and the Chern character formula in Theorem~\ref{thm: HKR and Ch},
  we are able to prove Theorem~\ref{thm: HRR}.

\subsubsection{GRR}
 The proper morphism $f$ in Theorem \ref{Thm: GRR} induces a dg functor from $\MFdg (\cX, w)$ to $\MFdg (\cY , v)$.
 The induced homomorphism \[ HH_* (\MFdg (\cX, w)) \to HH_* (\MFdg (\cY , v)) \] in Hochschild homology has a description \S~\ref{sub: cat grr}
 in terms of the canonical pairing of  $\MFdg (\cX \ti \cY , - w \ot 1 + 1 \ot v)$ and the categorical Chern
 character of the matrix factorization associated to the graph morphism $\Gamma _f : \cX \to \cX \ti \cY$. Under the HKR isomorphisms, the deformation to the normal cone allows us
 to interpret the description as the pushforward by $f$ of twisted Hodge cohomology  up to a Todd correction term.

\subsection{Conventions and notation}
Let the ground field $k$ be an algebraically closed field of characteristic zero.
Let $\mu _r$ denote the group of $r$-th roots of unity over the field $k$.
Throughout this paper, let $\cX$ be a finite type separated DM stack  over $k$.
We denote by $I\cX$ the inertia stack of $\cX$. 
Let \[ \rho _{\cX} : I\cX \to \cX \] denote the natural representable morphism, which is finite and unramified;  \cite[\S~3]{AGV}.
Let $G$ be a finite group which acts on a scheme $Y$ of finite type over $k$. 
For a quotient stack $[Y/G]$, let 
\[ IY := \{ (g, y) \in G \ti Y : gy = y \} := G\ti Y \ti _{Y\ti Y} \Delta _Y \] so that $I[Y/G] = [IY/G]$.

We write $\widehat{G}$ for the character group $\mathrm{Hom}(G, \mathbb{G}_m)$ of $G$. 
The label $\sim$ on an arrow indicates the arrow is a  quasi-isomorphism.
By a vector bundle we mean a locally free coherent sheaf.

For a local immersion (i.e., an unramified representable morphism) $f: \cX \to \cY$ between DM stacks, we denote  $C_{\cX / \cY}$ be 
the normal cone to $\cX$ in $\cY$; see \cite{Kresch: cycle, Vistoli}. If $f$ is a regular local immersion, then
we write $N_f$ or $N_{\cX/ \cY}$ for the vector bundle  $C_{\cX / \cY}$ on $\cX$.

For a vector bundle $E$, we often write $1_E$ for the identity morphism $\mathrm{id}_E$ of $E$. For a dg category $\cA$, its homotopy category is denoted by $[\cA]$.

\subsubsection*{\bf Acknowledgements}
We thank David Favero and Daniel Pomerleano for answering some questions that we had during the preparation of this manuscript. We thank Jeongseok Oh for comments on an initial draft. The third author thanks Charanya Ravi for discussions.

B. Kim  is supported by KIAS individual grant MG016404 at Korea Institute for Advanced Study. 
D. Choa is supported by KIAS individual grant MG079401  at Korea Institute for Advanced Study.
B. Sreedhar  was supported by the Institute for Basic Science (IBS-R003-D1).

\section{Mixed Hochschild complexes and Chern characters}

Unless otherwise stated, we follow notation and conventions of \cite{BW, CKK1} for curved dg (in short cdg) categories $\cA$,
the mixed Hochschild complexes of $\cA$, and the category of mixed complexes. We briefly recall notation therein 
and some foundational facts which we are going to use later.

 \subsection{Mixed Hochschild complexes}

  For a curved dg category $\cA$, we {use the following notation:
 
 \begin{itemize}
 \item $C (\cA) \ (\MC (\cA))$ : (mixed) Hochschild complex.
 \item   $\oC (\cA)\ (\oMC (\cA))$:   (mixed) normalized Hochschild complex.
 \item  $C ^{II} (\cA) \ (\MC^{II} (\cA))$ : (mixed) Hochschild complex of the second kind.
 \item $\oC ^{II} (\cA)\  (\oMC^{II} (\cA))$ : (mixed) normalized Hochschild complex of the second kind. 
 \end{itemize}
  For notational convenience we let $C'$ denote either $C, \oC, C^{II}$ or $\oC ^{II}$ and  $\MC ' $ denote either $\MC, \oMC$, $\MC ^{II}$, or $\oMC ^{II}$.
 The normalized negative cyclic complex is denoted by $\oC (\cA) [[u]]$ where $u$ is a formal variable of degree $2$.
 For a mixed complex $(C, b, B)$ we simply write $C[[u]]$ for the complex $(C [[u]], b + uB)$.
 
\subsection{Foundational facts frequently in use}

\subsubsection{Invariance of the natural projections} The projection $\MC (\cD) \to \oMC (\cD) $ for a dg category $\cD$ and the projection 
$\MC ^{II} (\cA) \to \oMC ^{II} (\cA )$ for a cdg category $\cA$ are quasi-isomorphisms \cite{BW, PP}. 

\subsubsection{(Quasi-)Morita invariance}\label{sub: quasi-Morita} 
If a dg functor  $\cD \to \cD'$ is Mortia-equivalent (i.e., the induced functor $D(\cD) \to D (\cD')$ of derived categories of $\cD$ and $\cD'$
is an equivalence), then the induced morphism $\MC (\cD ) \to \MC (\cD ')$ of mixed complexes is a quasi-isomorphism \cite{Keller_Invariance}.   
If $\cA \to \cA'$ is a pseudo-equivalence of cdg categories, then the induced morphism 
$\oMC ^{II} (\cA) \to \oMC ^{II} (\cA ')$ is a quasi-isomorphism \cite{PP}. This invariance is dubbed as quasi-Morita invariance. 

\subsubsection{Localization in cyclic homology}\label{subsub: localization} 
If $\cA \to \cB \to \cC$ be an exact sequence of exact dg categories, then it induces 
an exact triangle of  mixed complexes 
\[ MC (\cA) \to \MC (\cB) \to \MC (\cC) \to \MC (\cA ) [1]; \]
see \cite[\S~5.6]{Keller_Cyclic}. 

\subsubsection{Local description of the inertia stack} Let $G$ be a finite group acting on a $k$-scheme $X$ and let $G/G$ denote the set of conjugacy classes of $G$.
 Then the following holds
$I[X/G] \cong  \sqcup _{g \in G/G}  [X^g /\rC _{G}(g) ]  $.

\subsubsection{Invariants and coinvariants}  Let $G$ be a group with a linear action upon $V$ a not-necessarily finite dimensional vector space.
Define the invariant space $V^G:=\Hom _G (k, V)$ and the coinvariant space  $V_G := k \ot _G V$.
If $G$ is a finite group, then the composition of the natural homomorphisms $V^G \to V \to V_G $ is an isomorphism.

\subsection{Categorical Chern characters}
For $P \in \cA$ the cycle class represented by  the identity morphism $1_P$ in the normalized Hochschild complex $\oC (\cA)$ (resp. the normalized negative cyclic complex
$\oC (\cA) [[u ]]$) 
of $\cA$  is denoted by $\Ch _{HH} (P)$ (resp. $\Ch _{HN}$).

\newcommand{\Aut}{\cA ut}

\section{The canonical central automorphism} \label{sec: canonical auto}
\subsection{The central embedding}\label{sub: central embedding} The inertia stack has a decomposition: \[ I\cX = \sqcup _{r=1}^{\infty} I _{\mu_r} \cX . \] 
An object of $I_{\mu_r}\cX$ over a $k$-scheme $T$ is a pair $(\xi , \ka)$ of an object
$\xi \in \cX (T)$ and  an injective morphism of group-schemes $\ka : \mu _r \ti T \to \Aut _T (\xi )$; see \cite[\S~3]{AGV}.
Note that for all but finitely many $r$, $I_{\mu_r} \cX$ is empty.
An automorphism of the pair $(\xi, \ka)$ in $I_{\mu_r}\cX$ is by definition an automorphism $f \in \Aut _T (\xi )$ such that 
\[ f \circ \ka \circ f^{-1} = \ka . \] In other words the automorphism group-scheme $\Aut _T (\xi, \ka )$ of $(\xi, \ka)$ over $T$ is the centralizer of $\ka$ in  $\Aut _T (\xi )$.
We have a canonical central embedding $\fc: \mu _r \ti T \to \Aut _T(\xi, \ka)$. 
This gives a natural morphism \begin{equation*}\label{eqn: c}  \mu _r \ti T \to T\ti _{I_{\mu_r}\cX} T ; (\xi, t) \mapsto (t, t, \fc (\xi , t)) .\end{equation*}

\subsection{The central automorphism}
Let $T \to I_{\mu_r}\cX$ be an \'etale surjection and let $pr _i :  T\ti _{I_{\mu_r}\cX} T  \to T$ be the $i$-th projection.
A vector bundle $E$ on $I_{\mu_r}\cX$ amounts to a vector bundle $F$ on $T$ with an isomorphism $\phi ^F \in \mathrm{Isom}_T (pr _1^* F , pr_2^* F)$ satisfying the cocycle condition.
By pulling back the isomorphism $\phi ^F$ to $\mu _r \ti T$ we obtain a morphism of group-schemes  
$\mu _r\ti T \to \mathrm{Aut}_{T} (F)$. Here $\mathrm{Aut}_{T} (F)$ denotes the group of automorphisms of $F$ fixing $T$.
Since $\fc$ is central, the homomorphism descends to a homomorphism $\mu _r \to \mathrm{Aut}_{I_{\mu_r}\cX} (E)$.
Denote  by \[ \can _E \in  \mathrm{Aut}_{I_{\mu_r}\cX} (E) \] the image of the
chosen $r$-th root $e^{2\pi i/r}$ of unity.

According to the action $\mu_r$ upon $E$, the bundle $E$  is decomposable into eigenbundles  \[ \oplus _{\chi \in \widehat{\mu _r}} E_{\chi} ,\] 
where  $\widehat{\mu _r}$ is the character group of $\mu_r$.
Then we have \[ \can _E = \oplus_{\chi \in \widehat{\mu _r} } \chi (e^{2\pi i /r } ) \mathrm{id}_{E_{\chi}} \in \Hom _{I_{\mu_r}\cX} (E, E) .\]
In turn this gives an automorphism  $\can _P$  of $P$ for every object $P$ of $\MFdg (I\cX, w)$. It is a morphism in the category $\MFdg (I\cX, w)$.

Note  that \begin{equation}\label{eqn: center} \can _{P'} \circ a = a \circ \can _{P} \quad \forall a \in \Hom _{I\cX}(P, P'). \end{equation} 
Hence the assignments $P\mapsto \can _P$ yield 
a natural transformation $\can$ between the identity functor $\mathrm{id} :  \MFdg (I\cX, w) \to  \MFdg (I\cX, w)$.
We will often drop the subscript $P$ in $\can _P$ for simplicity. 

\subsection{The local description}
Locally the automorphisms are described as follows. Suppose that $\cX = [X/G]$ where $G$ is a finite group and $X$ is a scheme. 
Let $g\in G$ with order $r$ and write $\rC _{G}(g)$ for the centralizer of $g$ in $G$.  For the component $[X^g / \rC _{G}(g) ]$ of $I_{\mu_r}\cX$ and a $\rC _{G}(g)$-equivariant sheaf $E$ on $X^g$,
we have an isomorphism \[ (g^{-1})^* E \xrightarrow{\varphi _g^E} E \] from the equivariant structure of $E$. Since $g$ acts trivially on $X^g$, $(g^{-1})^* E = E$. And hence
$\varphi _g^E $ is an automorphism of $E$, which is the automorphism $\can _E$.
Since any element of $\rC _{G}(g)$ commutes with $g$, the homomorphism $\varphi ^E_g$ is  $\rC _{G}(g)$-equivariant. 
Thus  $\varphi_g^E \in \mathrm{Hom}_{I_{\mu_r}\cX} (E, E)$. For any  $\rC _{G}(g)$-equivariant sheaf $E'$  on $X^g$ and any $\rC _{G}(g)$-equivariant 
$\cO_{X^g}$-module homomorphism $a: E\to E'$, note that
$\varphi _g^{E'}  \circ a = a \circ \varphi _g^E $.

\subsection{$T_{I\cX} \cong IT_{\cX}$}\label{sub: TI IT} In this subsection let $\cX$ be smooth over $k$.
We prove that there is a natural isomorphism $T_{I\cX} \cong IT_{\cX}$.

\subsubsection{}\label{subsub: ses rho TX}    
Note first that there is a natural short exact sequence
of vector bundles   \[ 0 \to  T_{I\cX} \to \rho ^* _{\cX} T_{\cX} \to N_{\rho_{\cX}} \to 0 . \]
 In fact this sequence splits. The reason is that according to the canonical automorphism of $\rho ^* _{\cX} T_{\cX}$
there is a decomposition of $\rho _{\cX} ^* T_{\cX} $ into the fixed part  and the moving part,
 which are naturally isomorphic to  $T_{I\cX} $  and   $N_{\rho_{\cX}}$, respectively.

\subsubsection{}\label{subsub: TI IT}
Consider a commuting diagram of natural morphisms
\[ \xymatrix{  I T_{\cX} \ar@{-->}[rd]^{\phi}    \ar@/^1pc/[rrd] \ar@/_1pc/[ddr]       &                                                       &                        \\
                                                                      &     T_{\cX}|_{I\cX}  \ar[r] \ar[d]        &  T_{\cX} \ar[d]  \\
                                                                      &           I\cX \ar[r]                              & \cX , }              \] 
                                                                      where the square is a fiber square.
 \begin{lemma}\label{lem: IT TI} The morphism $\phi$ induces an isomorphism $IT_{\cX}  \cong T_{I\cX}$.                                            
 \end{lemma}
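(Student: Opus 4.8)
The plan is to construct the inverse morphism explicitly and check locally that the two compositions are identities. First I would recall what $IT_\cX$ parametrizes: an object over a $k$-scheme $T$ is a triple $(\xi, v, \alpha)$ where $\xi \in \cX(T)$, $v$ is a tangent vector at $\xi$ (equivalently a $T$-point of $T_\cX$ lying over $\xi$), and $\alpha$ is an automorphism of $(\xi, v)$ in $T_\cX$ of the appropriate finite order; since $T_\cX \to \cX$ is representable and smooth, such an $\alpha$ is the same datum as an automorphism $\beta$ of $\xi$ in $\cX$ together with the condition that the induced action of $\beta$ on the fiber of $T_\cX$ fixes $v$. In other words $(\xi, v, \alpha)$ amounts to a point $(\xi, \beta)$ of $I\cX$ together with a vector $v$ in the fiber of $\rho_\cX^* T_\cX$ that is fixed by $\can$. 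By \S\ref{subsub: ses rho TX}, the $\can$-fixed part of $\rho_\cX^* T_\cX$ is canonically $T_{I\cX}$, so this is exactly the data of a $T$-point of $T_{I\cX}$. This dictionary is the content of the lemma; the morphism $\phi$ built from the universal property of the fiber square is, on $T$-points, the map $(\xi, v, \alpha) \mapsto (\xi, \beta, v)$ into $T_\cX|_{I\cX}$, and its image is the fixed subbundle $T_{I\cX} \subset \rho_\cX^* T_\cX = T_\cX|_{I\cX}$.

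Concretely, I would argue as follows. Step one: show $\phi$ factors through the closed substack $T_{I\cX} \hookrightarrow T_\cX|_{I\cX}$. For this it suffices to check that for every $T$-point $(\xi, v, \alpha)$ of $IT_\cX$, the vector $\phi(\xi,v,\alpha) = v \in (\rho_\cX^*T_\cX)(T)$ is fixed by the canonical automorphism $\can$ of $\rho_\cX^* T_\cX$ pulled back to $T$; but $\can$ on $\rho_\cX^*T_\cX$ is by definition induced from the tautological automorphism $\beta$ of $\xi$ acting on tangent vectors, and the defining condition of $\alpha$ being an automorphism of $(\xi,v)$ in $T_\cX$ is precisely that this action fixes $v$. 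Step two: conversely, a $T$-point of $T_{I\cX}$ is a $T$-point $(\xi,\beta)$ of $I\cX$ together with a $\can$-fixed vector $v \in (\rho_\cX^*T_\cX)(T)$; sending this to $(\xi, v, \beta)$ — where we reinterpret $\beta$ as an automorphism of the pair $(\xi,v)$ in $T_\cX$, which is legitimate exactly because $v$ is $\can$-fixed — defines a morphism $\psi\colon T_{I\cX}\to IT_\cX$. Step three: check $\phi\circ\psi$ and $\psi\circ\phi$ are the identity; this is immediate from the descriptions on $T$-points, since both compositions visibly send a triple/pair to itself. Since all stacks involved are limit-preserving, checking on $T$-points (i.e. as functors of points) suffices.

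Alternatively — and perhaps more cleanly for the write-up — I would verify the isomorphism \'etale-locally using the local description from \S~\ref{sub: TI IT} and the convention $I[Y/G] = [IY/G]$: writing $\cX = [X/G]$ \'etale-locally with $X$ smooth, one has $T_\cX = [T_X/G]$, hence $IT_\cX = [IT_X/G]$ where $IT_X = \{(g,w)\in G\times T_X : gw = w\}$. On the other hand $I\cX = [IX/G]$ with $IX = \{(g,x)\in G\times X : gx=x\}$, and $T_{I\cX}$ restricted to the component indexed by $g$ is $T_{X^g}$ with its $\rC_G(g)$-action. A point $(g,w)$ of $IT_X$ is a group element $g$ and a tangent vector $w\in T_X$ with $gw = w$; writing $w$ over a point $x\in X$, the condition $gw=w$ forces $gx = x$, so $x\in X^g$, and then $w\in (T_X|_{X^g})^{g} = T_{X^g}$ by the splitting in \S~\ref{subsub: ses rho TX} applied locally (the $g$-fixed part of $T_X|_{X^g}$ is $T_{X^g}$ since $X$ is smooth and $g$ has finite order, so $X^g$ is smooth and cut out transversally). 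This gives a $G$-equivariant bijection $IT_X \cong \sqcup_g T_{X^g}$ compatible with the identifications, hence $IT_\cX \cong T_{I\cX}$, and one checks this agrees with $\phi$ by naturality.

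The main obstacle I anticipate is purely bookkeeping rather than conceptual: one must be careful that the finite-order/injectivity condition defining the inertia stack ($\ka\colon \mu_r\times T\to \Aut$ injective, cf. \S~\ref{sub: central embedding}) transports correctly between $T_\cX$ and $\cX$ — i.e. that an automorphism of $(\xi, v)$ in $T_\cX$ of order exactly $r$ corresponds to an automorphism of $\xi$ in $\cX$ of order exactly $r$ (not a proper divisor), which holds because $T_\cX\to\cX$ is representable so $\Aut_T(\xi,v)\hookrightarrow\Aut_T(\xi)$ and the orders agree. A secondary subtlety is making sure the splitting of the short exact sequence in \S~\ref{subsub: ses rho TX} — the decomposition of $\rho_\cX^*T_\cX$ into fixed and moving parts — really does identify the fixed part with $T_{I\cX}$ and not merely with some abstract subbundle; but this is exactly what \S~\ref{subsub: ses rho TX} asserts, so I may invoke it. Once these compatibilities are in place, the isomorphism follows formally.
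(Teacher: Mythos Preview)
Your proposal is correct. Your ``alternative'' \'etale-local argument is exactly the paper's proof: reduce to $\cX=[Y/G]$ with $G$ finite, write $T_{[Y/G]}\cong[T_Y/G]$ and use the local description of inertia to reduce to $(T_Y)^g\cong T_{Y^g}$.

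Your primary functor-of-points argument is a genuinely different, more intrinsic route: instead of passing to a presentation, you feed in the identification from \S\ref{subsub: ses rho TX} of $T_{I\cX}$ with the $\can$-fixed subbundle of $\rho_\cX^*T_\cX$ and match moduli descriptions directly. This buys you a global inverse $\psi$ without any local computation. One caution on rigor: ``limit-preserving, so checking on $T$-points suffices'' is not quite the right justification for morphisms of stacks, since a priori one must track $2$-isomorphisms. The clean fix is to observe that both $IT_\cX\to I\cX$ (apply $I$ to the representable morphism $T_\cX\to\cX$; the fiber over $(\xi,\beta)$ is the $\beta$-fixed locus in the algebraic space $T_\cX\times_\cX T$) and $T_{I\cX}\to I\cX$ are representable, so $\phi$ and your $\psi$ are morphisms of algebraic spaces relative to $I\cX$, and then comparing $T$-points over $I\cX$ is legitimate. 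With that adjustment your argument goes through; your handling of the order-matching subtlety via representability is correct.
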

  \begin{proof} First note that it is enough to check the isomorphism over the \'etale site of the coarse moduli space of $\cX$. 
  Since $\cX$ is separated, then $\cX$ is \'etale locally a quotient of a nonsingular variety $Y$  by a finite group $G$ action.
 Hence we may assume that $\cX = [Y/G]$. 
 Since \[ T_{[Y/G]} \cong [T_Y / G]  \text{ and }    I[Y/G] \cong \bigsqcup _{g\in G/G} [Y^g / \rC _{G}(g) ],  \]
we have
\[  IT_{[Y/G]} \cong \bigsqcup _{g \in G/G} [ (T_Y)^g / \rC _{G}(g) ]   \text{ and } T_{I[Y/G]} \cong \bigsqcup _{g\in G/G} [T_{Y^g} /\rC _{G}(g) ]. \]
  Since $(T_Y)^g \cong T_{Y^g}$, we conclude the proof.            
\end{proof}

\begin{Rmk}
When $\cX$ is the global quotient $[Y/G]$ by a finite group $G$, $N_{\rho _{\cX}} \cong [N_{Y^g/ Y} / \rC _{G}(g) ]$.
\end{Rmk}

\section{Stacky Hochschild-Kostant-Rosenberg}
In this section, we assume $\cX$ is a smooth separated DM stack and the critical value of $w$ is over $0$.  
If $\GG = \ZZ$, then $w=0$. 
If $\GG = \ZZ/2$, then we furthermore assume that $w: \cX \to \mathbb{A}^1$ is flat.

\subsection{Matrix factorizations and their derived categories}
Whenever an LG model $(\cX , w)$ is given, we consider a sheaf of curved differential graded (CDG for short) algebra $(\cO_{\cX} , -w)$ over $\cX$. It is concentrated in degree $0$ with zero differential and a curvature $-w$.
\begin{Def}\label{Def: CDG module}
A {\em quasi-differential graded module (QDG-module for short)} over $(\cO _{\cX} , -w)$ is a pair $(P, \delta _P)$ of an $\cO _{\cX}$-module $P$ and an  $\cO _{\cX}$-linear degree $1$ endomorphism $\delta _P$.
We say a QDG module is
    \begin{itemize}
    \item {\em (quasi-)coherent } if $P$ is (quasi-)coherent,
    \item {\em locally free} if $P$ is locally free, 
    \item {\em matrix quasi-factorization} if $P$ is locally free of finite rank.
    \end{itemize}
\end{Def}

We denote a category of QDG modules over $(\cO_\cX, -w)$ by $q\mathrm{Mod}(\cX, w)$. It is a CDG category whose morphisms and differentials are
    \begin{align*}
    \Hom_{q\mathrm{Mod}}&((P,\delta_P), (Q, \delta_Q))=\left( \Hom_{\cO_\cX}(P, Q) , \delta \right),\\
    \delta(f)&=  \delta _Q \circ f  - (-1)^{|f|} f \circ \delta _P.
    \end{align*} 
    The curvature element $h_{(p, \delta_P)}$ of $(P, \delta_P)$ is defined as $\delta_P^2+\rho_{-w} \in \End(P)$, where $\rho _{-w}$ is the multiplication map by $-w$.

    \begin{Def}
    A QDG-module $(P, \delta_P)$ is called {\em a factorization} if its curvature is zero. 
    We define (quasi-)coherent or locally free factorizations as in \ref{Def: CDG module}. 
    In particular, we call it a {\em a matrix factorization} if $P$ is a locally free of finite rank.
    \end{Def}
    
By definition, factorizations form a DG subcategory inside $q\mathrm{Mod}(\cX, w)$ denoted by $\mathrm{Mod}(\cX, w)$. We denote a full DG subcategory of (quasi-) coherent and matrix factorizations by $\mathrm{QCoh}(\cX, w)$,  $\Coh(\cX, w)$ and $\mathrm{MF}(\cX, w)$ respectively. 

We recall constructions of the derived category of factorizations following \cite{Pos: two}. Let $[\mathrm{QCoh}(\cX, w)]$ be the homotopy category of $\mathrm{QCoh}(\cX, w)$. Denote by $\mathrm{AbsAcyc}(\cX, w)$ the smallest triangulated subcategory containing the totalizations of all short exact sequences in $Z^0\mathrm{QCoh}(\cX, w)$. Its object are called {\em  absolutely acyclic factorizations.} Also, denote by $\mathrm{CoAcyc}(\cX, w)$ the smallest triangulated subcategory containing the totalizations of all acyclic factorizations which is closed under infinite direct sum, its object are called {\em a coacyclic factorizations}.
    \begin{Def}
    The {\em absolute derived category of $\mathrm{QCoh}(\cX, w)$} is the Verdier quotient
    \[\mathrm{D}^{\mathrm{abs}}(\mathrm{QCoh}(\cX, w)):=[\mathrm{QCoh}(\cX, w)]/\mathrm{AbsAcyc}(\cX, w).\]
    The {\em coderived category of $\mathrm{QCoh}(\cX, w)$} is the Verdier quotient
    \[\mathrm{D}^{\mathrm{co}}(\mathrm{QCoh}(\cX, w)):=[\mathrm{QCoh}(\cX, w)]/\mathrm{CoAcyc}(\cX, w).\]
    We define an absolute/coderived category of  $\Coh(\cX, w)$ and $\mathrm{MF}(\cX, w)$. 
    \end{Def}
    \begin{Def}
    The derived category of of matrix factorizations denoted by $\mathrm{D}\MF(\cX, w)$ is the smallest full triangulated subcategory of $\mathrm{D}^{\mathrm{co}}(\mathrm{QCoh}(\cX, w))$ which contains $\mathrm{D}^{\mathrm{abs}}(\MF(\cX, w))$.
    \end{Def}

    \begin{Rmk}Relations between various categories are well-known. We only recall a few facts we will going to use later. 
    If $\cX$ is smooth, then it is known that Verdier localization 
    $\mathrm{D}^{\mathrm{abs}}(\mathrm{QCoh}(\cX, w)) \to \mathrm{D}^{\mathrm{co}}(\mathrm{QCoh}(\cX, w))$
    is an equivalence, and an image of $\mathrm{D}^{\mathrm{abs}}(\Coh(\cX, w))$ consists of compact generators inside $\mathrm{D}^{\mathrm{co}}(\mathrm{QCoh}(\cX, w))$ (see \cite[\S~3.6]{Pos: two}). If $\cX$ has the resolution property, then 
    $\mathrm{D}^{\mathrm{abs}}(\MF(\cX, w)) \to \mathrm{D}^{\mathrm{abs}}(\mathrm{Coh}(\cX, w))$
    is an equivalence. (See \cite{PV: Sing})
    \end{Rmk}

\subsection{\v{C}ech model}\label{sub: Cech model}
In this subsection, we recall the \v{C}ech type DG enhancement of $\mathrm{D}\MF(X, w)$ as in \cite{CKK1}.

Fix an affine \'etale surjective morphism $\bp : \fU \to \cX$ from a $k$-scheme $\fU$. 
Let  $\fU ^r$ denote the $r$-th fold product of $\fU$ over $\cX$ and let $\bp_r : \fU ^r \to \cX$ denote  the projection. 
For a vector bundle $E$ on $\cX$, let $\vC ^r  (E)= \bp_{r*} \bp_{r}^{*} E$  and 
\[\vC (E) := \left( \bigoplus _{r \ge 1} \vC ^r  (E), \dC \right) = \left[ 0 \to p_{1*}p_1^*E \to p_{2*}p_2^*E \to \cdots \right]\]
a \v{C}ech complex.

Now let $(E, \delta_E)$ be a matrix quasi-factorization over $(\cO_\cX, -w)$. Observe that $\vC(\cO_{\cX})$ can be viewed as a sheaf of $\cO_\cX$-algebras equipped with Alexander-Whitney product. 
By projection formula $ \vC(E) = E \ot _{\cO_{\cX}}\vC(\cO_{\cX})$ and $\vC(E)$ carries a natural $\vC(\cO_\cX)$-module structure. 
We equip $\vC (E)$ with a curved differential 
\[   \delta _{\vC(E)}:= \delta _P \ot 1 + 1\ot \dC  . \]
We regard $(\vC(E), \delta_{\vC(E)})$ as a QDG-module over $(\vC(\cO_\cX), w)$. Notice that the curvature of $(\vC (E), \delta_{\vC(E})$ coincides with the curvature of $E$ under the canonical map
$\End _{\cO _{\cX}} (E) \to \End _{\vC (\cO _{\cX})} (\vC (E))$.

\begin{Def}\label{def: Cech model}
For a fixed affine \'etale open cover $\fU$, a {\em \v{C}ech model CDG category} $q\MFdg(\cX, w)$ is a CDG category whose objects are matrix quasi-factorizations and its $\GG$-graded Hom spaces are defined as usual
    \begin{align}
    \Hom_{q\MFdg}(P, Q)&:=\left( \Hom_{\cO_{\cX}}(\vC(P), \vC(Q)) , \delta \right)\\
    \delta &=  \delta _{\vC (P')} \circ f  - (-1)^{|f|} f \circ \delta _{\vC(P)}
    \end{align}
Similary, we define a {\em \v{C}ech model dg  category} $ \MFdg  (\cX, w) $ of matrix factorizations for $(\cX, w)$ as a full DG subcategory of $q\MFdg (\cX, w) $ consisting of matrix factorizations for $(\cX, w)$. When it is necessary to specify the covering $\fU$, we write $ \MFdg   (\cX, w; \fU) $ for $\MFdg (\cX, w)$.
\end{Def}

    \begin{Rmk}\label{Rmk: truncation}
    We view $\Hom_{\MFdg}(P, Q)$ as a $\mathbb Z \times \mathbb Z/2$ -graded bicomplex. 
    This complex is  not bounded above in $\mathbb Z$-direction because a \v{C}ech cover $\fU$ of a stack is genuinely unordered. 
    One can go around the subtleties by taking a suitable truncation. Suppose $(E, \delta_E)$ is a matrix factorization. Define 
\[ \tau\vC(E) :=  \tau _{ \le \dim \cX} \left( \bigoplus _{r \ge 1} \vC ^r  (E), \dC \right)\]
where $\tau _{r \le \dim \cX}$ denotes the truncation. Note that under the assumptions in this text all stacks have finite cohomological dimension, hence the truncated one does compute the sheaf cohomology of the quasi-coherent sheaf $E$,
since the map to the coarse moduli $\cX \to \underline{\cX}$ is cohomologically affine.
Therefore, the induced map between the spectral sequences associated to \v{C}ech filtrations
\[\Hom(\vC(P), \vC(Q)) \to \Hom(\tau \vC(P), \vC(Q)) \leftarrow \Hom(\tau\vC(P), \tau\vC(Q))\]
are isomorphisms at the first page and hence, are quasi-isomorphisms themselves. Also notice that $\tau \vC(E) \to \vC(E)$ is a quasi-isomoprhism in $\mathrm{D}\MF(\cX, w)$.
\end{Rmk}

It is not hard to see that $\MFdg(\cX, w)$ is a DG enhancement of the derived category of $\mathrm{D}\MF(\cX, w)$. Note that 
    \begin{enumerate}
    \item $P$ is coacyclic if and only if $P|_{\fU}$ is coacyclic, since the natural morphism $P \to \vC (P)$ is termwise exact. See \cite[Proposition 2.2.6]{CFGKS}.
    \item TFAE: $P|_{\fU}$ is coacyclic, $P|_{\fU}$ is absolutely acyclic, and $P|_{\fU}$ is contractible by \cite[\S~3.6]{Pos: two};
    \item TFAE: $P|_{\fU}$ is coacyclic, $P$ is locally contractible by \cite[Proposition 2.2.6]{CFGKS} and (1).
Here $P$ is called {\em locally contractible} if there is an open covering $\mathfrak{V}$ in smooth topology of $\cX$ such that $P|_{\mathfrak{V}}$ is contractible; see \cite{PV: Sing}. 
    \end{enumerate}
Therefore if $P \in \MFdg(\cX, w)$ represents a coacyclic object in $\mathrm{D}\MF(\cX, w)$, then $P$ is locally contractible and hence, $\Hom_{\MFdg}(P, P) \simeq 0$.

\begin{Rmk} \label{rmk: ind cover}
Consider another affine covering $\fU' \to \cX$. Let $\fU '' = \fU ' \ti _{\cX} \fU$. Then there is 
a natural dg functor $\MFdg (\cX, w, \fU') \to \MFdg (\cX, w, \fU '')$, which is a quasi-equivalence. 
The induced chain map  $\MC '  (\MFdg (\cX, w, \fU)) \to \MC '( \MFdg (\cX, w, \fU ''))$ between mixed complexes are quasi-isomorphism. 
For the mixed complex of the first kind it follows from the Morita invariance. 
For that of the either kind consider the Hochschild complex $C '  (\MFdg (\cX, w))$ filtered by \v{C}ech degree.
Since the filtration is bounded (See \ref{Rmk: truncation}), we may apply the Eilenberg-Moore comparison theorem (\cite[Theorem 5.5.11]{Weibel}) 
to check that the induced chain map is a quasi-isomorphism.
A similar argument  shows that the natural chain map  \[ \MC '  (q\MFdg (\cX, w, \fU)) \to \MC '( q\MFdg (\cX, w, \fU '')) \] is also a quasi-isomorphism.
\end{Rmk}

\subsection{Twisting} 
 For a morphism from a scheme $U$  to  $\cX$, write $IU$ for the fiber product $U \ti _{\cX} I\cX$. 
Following To\"en, Halpern-Leistner and Pomerleano \cite{H-LP} we consider the assignments 
\[ U \mapsto  C ' ( q\MFdg (IU, w|_{IU}, \fU \ti _{\cX} IU )) \]  for all \'etale morphisms $U\to \cX$.
They form a presheaf   of mixed complexes on the small \'etale site $\cX$, which we denote by
\begin{align*}  \MC ' (q\MF_{I\cX, w} (-))   . \end{align*}
Denote the associated cochain complex by $C '  (q\MF_{I\cX, w} (-))$. 
There is a natural morphism of mixed complexes \[ nat: \MCp (q\MFdg (I\cX, w)) \to (\Gamma (\cX,  \MCp (q\MF_{I\cX, w} (-))), \]
where the \v{C}ech model $q\MFdg (I\cX, w)$ uses the affine cover $\fU \ti _{\cX} I\cX \to I\cX$.

Define a $k$-linear map $\mathfrak{tw}: C ' (q\MFdg (I\cX, w)) \to C ' (q\MFdg (I\cX, w))$  associated to $\can$ 
by  

\[ a_0 [a_1 | \cdots | a_n ] \mapsto   a_0 [ a_1 | \cdots | \can \circ a_n ] .\]
Note that by \eqref{eqn: center} $b \circ \ftw = \ftw \circ b$, i.e., $\ftw$ is a chain automorphism of the Hochschild complex $\Hochp (q\MFdg (I\cX, w))$.

Consider the composition $\tau ' $ of a sequence of chain maps
\begin{multline}\label{eqn: tau}   \Hochp (q\MFdg (\cX, w)) \xrightarrow{pullback} \Hochp (q\MFdg (I\cX, w)) \\
\xrightarrow{\mathfrak{tw}} \Hochp (q\MFdg (I\cX, w)) 
\xrightarrow{nat} \Gamma (\cX, \Hochp (q\MF_{I\cX, w} (-) )) . \end{multline}

\begin{Prop}\label{prop: local}  The chain map $ \otau ^{II}$ is a quasi-isomorphism when
$\cX$ is of form $[\Spec A / G]$ for some commutative $k$-algebra $A$ with a finite  group $G$ action.
\end{Prop}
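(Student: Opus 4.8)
The plan is to reduce the statement to the affine (finite group) case where the comparison with Baranovsky's map is available, and to exploit the fact that in this situation the \v{C}ech model and the presheaf of mixed complexes both compute the same object up to controlled quasi-isomorphisms. First I would unwind the definitions: for $\cX = [\Spec A/G]$ we have $I\cX = \sqcup_{g\in G/G}[\Spec(A/\mathfrak{a}_g)/\rC_G(g)]$ where $\Spec(A/\mathfrak{a}_g) = (\Spec A)^g$, and the affine cover $\fU\to\cX$ can be taken to be $\Spec A\to[\Spec A/G]$ itself, so $\fU\ti_\cX I\cX$ is an affine cover of $I\cX$ componentwise. The key point is that $\tau'$ (and hence $\otau^{II}$) is assembled from three maps, and I would check quasi-isomorphy of each with respect to the second-kind normalized Hochschild complexes.

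The first map, $\mathrm{pullback}\colon \Hochp(q\MFdg(\cX,w))\to\Hochp(q\MFdg(I\cX,w))$ followed by the twist $\ftw$, is where the essential content lies. Here I would invoke quasi-Morita invariance (\S~\ref{sub: quasi-Morita}): since $\cX$ is a quotient by a finite group acting on an affine, $q\MFdg(\cX,w;\fU)$ is pseudo-equivalent to a small cdg category built from $G$-equivariant modules over $A$, and similarly for $I\cX$. The composite $\ftw\circ(\text{pullback})$ on second-kind Hochschild homology should then be identified, after transporting along these equivalences and using the "invariants = coinvariants" isomorphism for the finite group $G$ (\S~2.2), with (a completed/periodicized version of) Baranovsky's map comparing $HH_*$ of the equivariant category with $\bigoplus_g HH_*$ of the fixed-point categories twisted by $\can$. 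This is precisely the map studied in \cite{H-LP, H-LP-Arxiv} and, in the matrix-factorization setting with a finite group, in \cite{BW, CKK1}; I would cite the relevant statement (the proof of Proposition~2.13 of \cite{H-LP} and \cite[Corollary~4.6]{H-LP-Arxiv}) to conclude it is a quasi-isomorphism. The curving $-w$ only enters through the curvature element and commutes with all of this, so the second-kind complex is the natural home.

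The remaining map $nat\colon\MCp(q\MFdg(I\cX,w))\to\Gamma(\cX,\MCp(q\MF_{I\cX,w}(-)))$ I would handle by a descent argument: the presheaf $U\mapsto C'(q\MFdg(IU,w|_{IU},\ldots))$ is, after sheafification, computed on the affine $\Spec A$ by the \v{C}ech complex for the cover $\fU\to\cX$, and since $\fU$ is affine and $G$ finite the higher \v{C}ech cohomology of this presheaf vanishes (cohomologically affine coarse moduli map, as in Remark~\ref{Rmk: truncation}), so $nat$ is a quasi-isomorphism — one compares the two spectral sequences for the \v{C}ech filtration and applies the Eilenberg–Moore comparison theorem exactly as in Remark~\ref{rmk: ind cover}. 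Combining the three quasi-isomorphisms gives that $\otau^{II}$ is a quasi-isomorphism.

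The main obstacle I anticipate is the first step: making precise the identification of $\ftw\circ(\text{pullback})$ with Baranovsky's map at the chain level, because our $\ftw$ is defined by inserting $\can$ into the \emph{last} slot of a Hochschild chain, whereas the natural equivariant-to-fixed-point decomposition produces the twist in a different slot, and reconciling these requires either a cyclic-rotation homotopy or working in the normalized (and, for convergence of the exponential/periodic part, completed) second-kind complex where such rotations are available. Remark~\ref{rem:commutative} of the paper already flags that the chain-level map differs from the one in \cite{H-LP-Arxiv}, so I would expect to need a short explicit homotopy argument — or a reduction to homology where only the induced map matters — to bridge this gap, and this is the part that genuinely uses the finiteness of $G$ (via \S~2.2) and the affineness of $A$.
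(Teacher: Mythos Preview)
Your decomposition of $\otau^{II}$ into three pieces, each of which you plan to show is a quasi-isomorphism, does not work: neither $\ftw\circ(\text{pullback})$ nor $nat$ is a quasi-isomorphism on its own. The intermediate object $\oC^{II}(q\MFdg(I\cX,w))$ is the Hochschild complex of the \emph{equivariant} category on $I\cX$, so its homology is governed by the inertia of the inertia $I(I\cX)$ --- concretely, for $\cX=[\Spec A/G]$ it carries a double sum $\oplus_{[g]}\oplus_{[h]\in C_G(g)/C_G(g)}(\,\cdots\,)$ rather than the single sum $\oplus_{[g]}(HH^{II}(A_g,w_g))^{C_G(g)}$ that both the source and the target of $\otau^{II}$ compute. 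So the sentence ``combining the three quasi-isomorphisms'' cannot be made to work, and your spectral-sequence argument for $nat$ (that higher \v{C}ech cohomology vanishes) only shows that sections compute derived sections, not that the source of $nat$ agrees with them. Baranovsky's map has the \emph{same} target as the full composite $\tau$, not as the partial map $\ftw\circ(\text{pullback})$; there is no way to stop in the middle.

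What the paper actually does is treat $\tau$ as a single map and compare it to Baranovsky's $\psi_A$ directly. One first uses Remark~\ref{rmk: ind cover} to take the cover $\fU=\Spec A$, so that $q\MFdg(\cX,w)=q\MFdg^G(A,w)$ and the target becomes $(\oplus_g\oC^{II}(q\MFdg(A_g,w_g)))^G$. Then quasi-Morita invariance is applied on \emph{both} sides simultaneously: the pseudo-equivalences $\{(A\rtimes G,-w)\}\hookrightarrow q\MFdg^G(A,w)$ and $\{(A_g\rtimes G,-w_g)\}\hookrightarrow q\MFdg(A_g,w_g)$ reduce the question to showing that the restriction $\tau|_{A\rtimes G}$ fits into a commutative triangle with $|G|\cdot\psi_A$ and the generalised trace $\Tr$ (both known quasi-isomorphisms). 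This is then an elementary one-line check on generators $a_0\ot g_0[a_1\ot g_1|\cdots|a_n\ot g_n]$, using only that $g$ acts trivially on $A_g$; no cyclic rotation or homotopy is needed, contrary to what you anticipate in your last paragraph. The twist $\ftw$ enters precisely to make this triangle commute (as Remark~\ref{rem:commutative} points out, it fails without it), but its role is visible only after composing with $nat$, not before.
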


A proof of the above proposition will be given \S~\ref{sub: local} and \S~\ref{sub: pf of prop local}.
For simplicity we will often write $\tau$ for $\tau '$ when there is no risk of confusion.

\subsection{Local case}\label{sub: local}

Let $\cX = [\Spec A / G]$ and let $w \in A^G$ a $G$-invariant element of $A$ as in Proposition \ref{prop: local}. 
Let $\MFdg ^G (A, w)$ denote the dg category of $G$-equivariant factorizations $P$ for $(A, w)$ which are projective as $A$-modules.  
The Hom space from $P$ to $Q$ is the $G$-invariant part of $\Hom _A (P, Q)$ of $\GG$-graded $A$-module homomorphisms.
Likewise we have  the cdg category $q\MFdg ^G (A, w)$ of $G$-equivariant quasi-modules for $(A, w)$ which are projective as $A$-modules.     
In fact these coincide with the \v{C}ech models $\MFdg (\cX, w)$ and $q\MFdg (\cX, w)$ with respect to the natural choice of an affine cover: $\Spec A \to \cX$.

 Let $I_g$ be the ideal of $A$ generated by
$a - g a $ for all $a \in A$.
Denote  $A_g := A / I_g$ and $w_g := w |_{A_g} \in A_g$.  
We regard the pair $(A, w)$ (resp. $(A_g, w_g)$) as a curved dg algebra $A$ (resp. $A_g$) with zero differential and curvature $w$ (resp. $w_g$).

The algebra $A$ has the induced left $G$-action. Note that for  $a, b \in A$ and $g, h \in G$,
\[ g (a \  h (b)) = g (a) \  gh (b) .\] 
The cross product algebra $A\rtimes G := A \ot k[G]$ has the multiplication 
defined by $(a\ot g) \cdot (b \ot h) = a g (b) \ot g h$.
We also view $A\rtimes G$ as a right $A$-module with a left $G$-action by
\[ (a \ot g ) \cdot b = a g (b) \ot g \text{ and } h\cdot (a \ot g)  = a \ot g h^{-1} . \]
Equivalently, $A\rtimes G$ is a right $A \rtimes G$-module by the multiplication
\[ (a \ot g) \cdot (b \ot h') = a g(b) \ot g h' . \]
Note that the curvature of $A\rtimes G$ with zero differential as a right quasi-module over $(A \rtimes G, w \ot 1)$
is $-w\ot 1$.

Denote by  $\{ (A\rtimes G, -w\ot 1) \}$ the full subcategory of $q\MFdg (A\rtimes G, w\ot 1)$ consisting 
of the indicated object $A\rtimes G$ with zero differential and curvature $-w\ot 1$ .
The embedding $\{ (A\rtimes G, -w) \} \hookrightarrow  q\MFdg (A\rtimes G, w\ot 1) $ is called a quasi-Yoneda embedding.
It is a pseudo-equivalence; see \cite{PP}.
Consider the embedding $ q\MFdg (A\rtimes G, w\ot 1) \hookrightarrow q\MFdg ^G (A, w)$, which is also a pseudo-equivalence; see \cite{CKK1}. 
Hence by the quasi-Morita invariance  the induced morphism of mixed complexes 
\[ \oMC^{II} (A\rtimes G, -w)  \to  \oMC^{II} ( q\MFdg ^G (A, w) ) \] is a quasi-isomorphism. 

Consider an embedding  $ \{ (A_g \rtimes G, -w_g \ot 1)\}  \hookrightarrow  q\MFdg (A_g, w_g)$. This is a pseudo-equivalence, 
 since $(A_g, -w_g)$ is a direct summand of  $(A_g \rtimes G, -w_g \ot 1)$ as a right quasi-module over $(A_g, -w_g)$.
  Hence  by the quasi-Morita invariance  the induced morphism of mixed complexes 
    \[ \oMC^{II} (\End _{A_g} (A_g \rtimes G) , -w_g \ot 1)  \to  \oMC^{II} (q\MFdg (A_g, w_g))  \]
            is a  quasi-isomorphism.
    
We have a diagram of chain maps
 \begin{equation}\label{eqn: big diag} \mbox{\tiny \xymatrix{ 
\oC ^{II} (q\MFdg ^G (A, w)) \ar[r]^{\tau \ \ \ } & (\oplus _g \oC ^{II} (q\MFdg (A_g, w_g) ) )^G \ar[r]^{natural }_{\cong\ \ } &  (\oplus _g \oC^{II} (q\MFdg (A_g, w_g) ) )_G \\
                    \oC ^{II} (A \rtimes G, - w\ot 1 ) \ar@{^{(}->}[u]^{\sim} \ar[rr]^{ \tau |_{A\rtimes G} } \ar[rd]_{|G|\cdot \psi_{A}}^{\sim} 
                    &  
                    &   \ar@{^{(}->}[u]^{\sim}  \ar[ld]^{\Tr}_{\sim}  (\oplus _g \oC ^{II} (\End _{A_g} (A_g \rtimes G) , - w_g \ot 1) ) _G \\
                      &  (\oplus _g \oC ^{II} (A_g, - w_g) )_G & }} \end{equation} 
                      where:
                      \begin{itemize}
                      \item two vertical chain maps are quasi-isomorphisms as explained already;
                      \item the middle horizontal map $\tau |_{A\rtimes G}$ is induced from the composition $natural \circ \tau $;
                      \item $\psi _A$ is the quasi-isomorphic chain map defined by Baranovsky \cite[page 799]{Bar} and
                       C\u ald\u araru and Tu \cite[Proof of 6.3]{CT}; 
                      \item $\Tr$ is the generalized trace map.
                      \end{itemize}

\begin{lemma}\label{lem: key hkr}
 The triangle in  \eqref{eqn: big diag} is commutative. Hence  $\tau |_{A\rtimes G} $ is a quasi-isomorphism. 
\end{lemma}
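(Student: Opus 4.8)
The plan is to prove commutativity of the lower triangle in \eqref{eqn: big diag} by a direct chain-level computation: both $\Tr\circ (\tau|_{A\rtimes G})$ and $|G|\cdot\psi_A$ are maps
\[
\oC^{II}(A\rtimes G,-w\ot 1)\to \Big(\bigoplus_g\oC^{II}(A_g,-w_g)\Big)_G,
\]
and I would show they agree on representative chains $a_0[a_1|\cdots|a_n]$ with $a_i\in A\rtimes G$. First I would unwind the definition of $\tau|_{A\rtimes G}$: it is the composite of the pullback along $\rho_\cX$, the twist $\ftw$ (insertion of $\can$ in the last slot), and the natural map to the presheaf sections, all restricted to the Yoneda image $\{(A\rtimes G,-w\ot1)\}$. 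Concretely, under the local description of $I\cX=\sqcup_{g\in G/G}[\Spec A_g/\rC_G(g)]$, pulling back the $A\rtimes G$-bimodule to the $g$-component and composing with $\can$ amounts to precomposing the last morphism with the element $g$; so $\tau|_{A\rtimes G}$ sends a Hochschild chain of $A\rtimes G$ to the collection, indexed by $g$, of the corresponding chain of $\End_{A_g}(A_g\rtimes G)$ with its last factor multiplied by $g$. Then $\Tr$ is the generalized trace $\End_{A_g}(A_g\rtimes G)\to A_g$ coming from $A_g\rtimes G$ being a finite projective $A_g$-module, so $\Tr\circ(\tau|_{A\rtimes G})$ becomes an explicit ``twisted'' Hochschild chain map landing in $(\bigoplus_g\oC^{II}(A_g,-w_g))_G$.

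Next I would recall Baranovsky's map $\psi_A$ and the variant of C\u ald\u araru--Tu: this is precisely the chain map realizing the classical computation $HH_*(A\rtimes G)\cong\big(\bigoplus_g HH_*(A_g)\big)_G$ (in the curved/second-kind setting, with curvatures $-w$, $-w_g$), given on chains by a formula that picks out the $g$-graded pieces, reindexes the tensor factors using the group elements, and projects to $\rC_G(g)$-coinvariants. The point is that, after tracking conventions, the explicit formula for $\Tr\circ(\tau|_{A\rtimes G})$ on a chain $a_0[a_1|\cdots|a_n]$ (write $a_i=\sum_{h_i}a_i^{h_i}\ot h_i$, expand the trace over a basis of $A_g\rtimes G$ as a free $A_g$-module, and use that the trace kills all but the ``diagonal'' group-contribution where $h_1\cdots h_n$ acts trivially modulo $I_g$) is, term by term and up to the overall factor $|G|$ coming from summing over the group versus averaging, the same as the defining formula of $\psi_A$. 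I would verify the two formulas match on generators; since both sides are manifestly $G$-equivariant and both factor through the coinvariants, it suffices to check on a set of chain representatives, which reduces everything to an identity of finite sums over $G$.

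The main obstacle, and where I would spend the most care, is bookkeeping: matching the precise conventions for $\can$ (the choice of $e^{2\pi i/r}$ and the resulting eigenbundle decomposition), the direction of the cross-product multiplication, the identification $(g^{-1})^*E=E$ on $X^g$ used to define $\can_E$, the sign/degree conventions in the second-kind complex $\oC^{II}$ with curvature $-w$, and—crucially—the source of the factor $|G|$, which comes from the discrepancy between the generalized trace normalization and the invariants-vs-coinvariants isomorphism (using that for a finite group the composite $V^G\to V\to V_G$ is an isomorphism, as recalled in \S~\ref{subsub: localization}). Once the triangle commutes, the stated conclusion is immediate: the left vertical map $|G|\cdot\psi_A$ and the right vertical map $\Tr$ are quasi-isomorphisms (the former by Baranovsky/C\u ald\u araru--Tu, the latter by quasi-Morita invariance as already established in the paragraph preceding the lemma), and the left vertical inclusion $\oC^{II}(A\rtimes G,-w\ot1)\hookrightarrow\oC^{II}(q\MFdg^G(A,w))$ is a quasi-isomorphism, so $\tau|_{A\rtimes G}=\Tr^{-1}\circ(|G|\cdot\psi_A)$ in the derived category of mixed complexes is a quasi-isomorphism; I would phrase this last step as two-out-of-three in the commuting triangle.
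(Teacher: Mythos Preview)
Your proposal is correct and follows essentially the same approach as the paper: a direct chain-level verification that $\Tr\circ(\tau|_{A\rtimes G})$ and $|G|\cdot\psi_A$ agree on elements $a_0\ot g_0[a_1\ot g_1|\cdots|a_n\ot g_n]$, with the key point being that $g$ acts trivially on $A_g$ (your phrase ``acts trivially modulo $I_g$''), after which the quasi-isomorphism statement is two-out-of-three. The paper's proof is just the explicit element-by-element diagram you describe, so your only surplus is the bookkeeping discussion; one small correction: the lemma concerns the Hochschild complexes $\oC^{II}$, not the mixed complexes, so your final clause ``in the derived category of mixed complexes'' should read ``in the derived category of complexes'' (the mixed version is handled separately in Proposition~\ref{prop: mixed}).
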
           
            
\begin{proof}
This will be clear since $g$ acts on $A_g$ trivially.
Note that the following diagram commutes,
\begin{equation}\label{diag: key} \mbox{\footnotesize \xymatrix{ a_0 \ot g_0 [ a_1 \ot g_1 | \cdots | a_n \ot g_n] \ar@{|->}[r]^{\tau |_{A\rtimes G}\ \ \ \ } \ar@{|->}[d]_{|G| \cdot \psi _A } 
               & \bigoplus _g \oa_0 \ot  g_0 [ \oa_1 \ot g_1 | \cdots | \oa_n \ot g_n g^{-1} ] \ar@{|->}[d]_{\Tr} \\
                                 |G| \cdot \oa_0 [ g_0 (\oa_1) | \cdots | g_0 \cdots g_{n-1} (\oa_n)]  \ar@{=}[r] &  
                                 \bigoplus _{g : \  g = g_0 \cdots g_n  } |G| \cdot \oa_0 [ g_0 (\oa_1) | \cdots | g_0 \cdots g_{n-1} g^{-1} (\oa_n) ] 
                                                                }} \end{equation}
                                 where $\oa$ denotes the element in $A_g$ associated to $a$.
   Here the right bottom corner is meant to have the $g$-th component 
 \[   \left\{\begin{array}{cl} |G| \cdot \oa_0 [ g_0 \oa_1 | \cdots | g_0 \cdots g_{n-1} g^{-1} \oa_n ] & \text{ if }  g = g_0 \cdots g_n  \\
                                                                0 & \text { otherwise. } \end{array} \right. \]
                                                             \end{proof}
    
       \begin{Rmk}\label{rem:commutative}
   Note that for  {for example when } $A = k$ with a nontrivial $G$, diagram \eqref{diag: key} is not commutative without the twisting $\mathfrak{tw}$ insertion in the definition of $\tau$.
   \end{Rmk}

\subsection{Proof of Proposition \ref{prop: local}}\label{sub: pf of prop local} 
Due to Remark \ref{rmk: ind cover} and the compatibility of the map $\tau$ with \v{C}ech differentials, 
the proof follows from Lemma~\ref{lem: key hkr}.

\subsection{The role of $\can$ at the level of category}
As we have already remarked, the insertion of central automorphism $\mathfrak{tw}$ was essential. 
We would like to sketch how does it appear naturally in the computation of Hochschild invariants in order to clarify its role. 
The proof of Proposition \ref{prop: local} can be interpreted as a two-step process. 
 
The first step is purely categorical.
Suppose a $k$-linear category $\mathcal C$ carries a strict $G$-action of a finite group $G$. 
We still denote corresponding endofunctors by $g:\mathcal C \to \mathcal C, \hskip 0.2cm g\in G$.  
We also denote its category of $G$-equivariant objects by $\mathcal C_G$. 
Its object consists of a pair $(E, \phi_g^E)$ where $E$ is an object of $\mathcal C$ and $\phi_g^E$ is an isomorphism $\phi_g^E : E \simeq gE$ satisfying a cocycle condition. The morphism is defined as usual. 

There is a natural functor 
    \[ \widetilde{\phantom{a}} : \mathcal C \to \mathcal C_G\]
called \textit{linearization} which is defined on objects as 
    \[\widetilde E = \left(\bigoplus_{h\in G}hE, \phi_g^{\widetilde E}\right), \hskip 0.2cm \phi_g^{\widetilde E} : \bigoplus_{h\in G} hE = \bigoplus_{h\in G}g(hE) \simeq \bigoplus_{h \in G} (gh)E.\] 
It is not hard to see that the linearization is a both left and right adjoint to the forgetful functor. 
Its essential image generates $\mathcal C_G$ and 
    \[\Hom_{\mathcal C_G}(\widetilde E_1, \widetilde E_2) \simeq \Hom_{\mathcal C}(\widetilde E_1, \widetilde E_2)^G.\]
This fact leads to the following simple description of Hochschild homology of $\mathcal C_G$. (See \cite{Pe})
    \begin{equation}
    \label{Hochschild decomposition}
    HH_*(\mathcal C_G) \simeq (\bigoplus_{g\in G} HH_*(\mathcal C, g))^G \simeq \bigoplus_{g \in \mathrm{Conj}(G)}HH_*(\mathcal C_{C(g)}, g).
    \end{equation} 
 Here, $HH_*(\mathcal C , g)$ denotes Hochschild homology with coefficient $g$, where the endofunctor $g$ is considered as a bimodule.

Second observation is geometric. For simplicity, let $\mathcal C = D(X)$ be a dg category of coherent sheaves on a smooth affine scheme $X=\Spec(A)$ acted on by a finite group $G$. One can easily extend the discussion to the case of matrix factorizations.
Each components of \eqref{Hochschild decomposition} has a simpler description:
    \begin{equation}
    \label{geometric isomorphism}
    HH_*(D_{C(g)}(X), g) \xrightarrow[\mathrm{res}]{\sim} HH_*(D_{C(g)}(X^g), g)
    \end{equation}
Notice that the action of $g$ on $X^g$ is trivial. If $(E, \{\phi_h^E\}_{h\in G})$ is a $G$-equivariant sheaves on $X$, then $\varphi^E_g = \left(\phi_g^E\right)^{-1}$ restricts to the central automorphism $\can _{E\vert_{X_g}}$ of $E\vert_{X^g}$. 
In fact, any $C(g)$-equivariant object $(F, \{\phi^F_h\}_{h\in C(g)})$ on $X^g$ carries a distinguished automophism $\can_F = \left(\phi^F_g\right)^{-1}$. 
This assignment is viewed as a natural transformation between identity functors, or an element of zeroth Hochschild cohomology;
\[[\can] \in HH^0(D_{C(g)}(X^g)).\]
{The map $\mathfrak{tw}$ on Hochschild chains is a cap product with $[\can]$. 

Lastly, observe that $D_{C(g)}(X^g)$ is generated by $\mathcal O_{X^g}$.  Notice that $g$-action on $\mathcal O_{X^g}$ is trivial so $\can$ could be ignored. This implies
    \begin{equation}
    HH_*(A_g)^{C(g)} \xrightarrow[\mathrm{inc}]{\sim}HH_*(D_{C(g)}(X^g), g).
    \end{equation}

\subsection{Mixed complex case} In general, the map $\tau$ is not a morphism of mixed complexes. In this subsection we modify $\tau$ to get a morphism of mixed complexes. 

For $\chi \in \hat{\mu _r}$ let $q\MFdg ^{\chi} (I_{\mu_r}\cX, w)$ be the full subcategory of $q\MFdg (I_{\mu_r}\cX, w)$ consisting $\chi$-eigenobjects of $q\MFdg (I_{\mu_r}\cX, w)$.
 The map $\ftw$ restricted to the subcomplex $C ( q\MFdg ^{\chi} (I_{\mu_r}\cX, w) )$, denoted by $\ftw _{\chi}$, is nothing but the multiplication by $\chi (e^{2\pi i/r})$.
  Hence \[ \ftw_{\chi} : \MC (q\MFdg ^{\chi} (I_{\mu_r}\cX) )  \to  \MC ( q\MFdg ^{\chi} (I_{\mu_r}\cX) ) \]  is an automorphism of the mixed Hochschild complex.

Consider the composition $\tau _m$ of a sequence of morphisms of mixed complexes 
\begin{multline*}\label{eqn: tau_chi}
 \tau _m:   \oMCII (q\MFdg  (\cX, w))   \xrightarrow{pullback} \oplus _{r, \chi} \oMCII  (q\MFdg ^{\chi} (I_{\mu_r}\cX, w)) \\
\hspace{1cm} \xrightarrow{\oplus \mathfrak{tw}_{\chi} } \oplus _{r, \chi}  \oMCII (q\MFdg ^{\chi} (I_{\mu_r}\cX, w)) 
                                       \xrightarrow{nat} \oplus _{r}   \Gamma (I_{\mu_r}\cX, \oMCII (q\MF_{I_{\mu_r}\cX, w} (-) ) ) ,
 \end{multline*}
where the natural map $nat$ is defined by setting \[ nat (\sum _{\chi} a^{\chi}_0 [\overline{a}^{\chi}_1 | ... | \overline{a}^{\chi}_n ] ) 
= (U\mapsto  \sum_{\chi}  a^{\chi}_{0|_{IU}} [\overline{a}^{\chi}_{1|_{IU}} | ... | \overline{a}^{\chi}_{n|_{IU}}] ). \]

\begin{Rmk}\label{rmk: bar Tr}
 While the cochain map  \[  C ^{II} ( q\MFdg (I_{\mu_r}\cX, w) ) \xrightarrow{\Tr} \oplus _{\chi} C ^{II} ( q\MFdg ^{\chi} (I_{\mu_r}\cX, w) ) \]  is an isomorphism,
  $ \oC ^{II} ( q\MFdg (I_{\mu_r}\cX, w) ) \xrightarrow{\overline{\Tr}} \oplus _{\chi} \oC ^{II} ( q\MFdg ^{\chi} (I_{\mu_r}\cX, w) ) $  is not an isomorphism in general but a
  quasi-isomorphism from the facts that $C ^{II}\to \oC^{II}$ is a quasi-isomorphism and the above $\Tr$ is an isomorphism. \end{Rmk}

\begin{Prop}\label{prop: mixed} Suppose that $\cX$ is of form $[\Spec A/ G]$ as in  Proposition \ref{prop: local}.
The morphism $\tau _m$ in the category of mixed complexes is a quasi-isomorphism.
\end{Prop}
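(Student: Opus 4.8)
The plan is to deduce Proposition~\ref{prop: mixed} from the chain-level statement Proposition~\ref{prop: local} by upgrading the individual quasi-isomorphisms appearing in diagram~\eqref{eqn: big diag} to quasi-isomorphisms of mixed complexes, and by tracking the compatibility of $\tau_m$ with the Connes $B$-operator. First I would observe that all the vertical maps in \eqref{eqn: big diag} — the quasi-Yoneda/pseudo-equivalence maps and the trace map $\Tr$ — are already known to be morphisms of mixed complexes (the pseudo-equivalences by the quasi-Morita invariance of \S\ref{sub: quasi-Morita}, which is stated at the level of $\oMC^{II}$; the trace map $\Tr$ componentwise, cf.\ Remark~\ref{rmk: bar Tr}), so the only thing that fails to be a morphism of mixed complexes in the original argument is the map $\tau$ itself, because of the twisting $\ftw$. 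The point of introducing $\tau_m$ via the eigen-decomposition $\oplus_{r,\chi} q\MFdg^{\chi}(I_{\mu_r}\cX,w)$ is precisely that on each $\chi$-eigen-summand the operator $\ftw_\chi$ is multiplication by the \emph{scalar} $\chi(e^{2\pi i/r})$, and a scalar multiple of the identity is automatically a morphism of mixed complexes (it commutes with both $b$ and $B$). Hence $\tau_m$, being a composite of a pullback, a direct sum of scalar multiplications, and the natural sheafification map $nat$, is a morphism of mixed complexes provided pullback and $nat$ are; pullback is clearly a cdg functor hence induces a mixed-complex map, and $nat$ is visibly compatible with $b$ and $B$ from its explicit formula.

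Next I would argue that $\tau_m$ is a quasi-isomorphism by forgetting the mixed structure and reducing to Proposition~\ref{prop: local}. Forgetting $B$, the underlying Hochschild complex of the middle term $\oplus_{r,\chi}\oC^{II}(q\MFdg^{\chi}(I_{\mu_r}\cX,w))$ receives from $\oC^{II}(q\MFdg(I_{\mu_r}\cX,w))$ the map $\overline{\Tr}$ of Remark~\ref{rmk: bar Tr}, which is a quasi-isomorphism, and under this identification the composite of pullback with $\oplus\ftw_\chi$ becomes exactly the map $\ftw\circ(\text{pullback})$ used to build $\tau$ in \eqref{eqn: tau} (summed over the components $I_{\mu_r}\cX$ of $I\cX$). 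Therefore, after forgetting $B$, $\tau_m$ is identified with the map whose second-kind normalized version $\otau^{II}$ is shown to be a quasi-isomorphism in Proposition~\ref{prop: local} (via Remark~\ref{rmk: ind cover}, \S\ref{sub: pf of prop local}, and Lemma~\ref{lem: key hkr}). A morphism of mixed complexes that is a quasi-isomorphism on underlying complexes is a quasi-isomorphism of mixed complexes (equivalently, it induces an isomorphism on all of $HH_*$, $HN_*$, $HC_*$, since these are computed by functorial spectral sequences from the Hochschild homology), so this finishes the argument.

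The step I expect to be the main obstacle — or at least the one requiring the most care — is making precise the identification of the underlying complex of $\tau_m$ with $\otau^{II}$, i.e.\ checking that the eigen-decomposition is genuinely \emph{compatible} with the rest of the diagram. Concretely, one must verify that the functor $q\MFdg(I_{\mu_r}\cX,w)\to\oplus_\chi q\MFdg^{\chi}(I_{\mu_r}\cX,w)$ and its effect on Hochschild chains interacts correctly with $\ftw$ and with $nat$: on the nose $\ftw$ acts on $\oC^{II}(q\MFdg(I_{\mu_r}\cX,w))$ by inserting $\can$ in the last slot, and one needs that under $\overline{\Tr}$ this becomes precisely $\oplus_\chi \chi(e^{2\pi i/r})\cdot\mathrm{id}$ — this is immediate on $C^{II}$ where $\Tr$ is an isomorphism respecting the eigen-grading, but since we work with the normalized complexes $\oC^{II}$ one has to invoke Remark~\ref{rmk: bar Tr} to know $\overline{\Tr}$ is still a quasi-isomorphism and that the square relating $\ftw$ to $\oplus\ftw_\chi$ commutes. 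The other mild subtlety is that $I\cX$ is a disjoint union over $r$ of the $I_{\mu_r}\cX$ (with only finitely many nonempty), so all the direct sums over $(r,\chi)$ are finite and no completion issues arise; I would remark on this explicitly. Once these bookkeeping points are settled, the proof is a formal consequence of Proposition~\ref{prop: local} together with the observation that scalars are morphisms of mixed complexes.
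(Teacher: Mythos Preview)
Your proposal is correct and follows essentially the same approach as the paper: the paper's proof simply says that a morphism of mixed complexes is a quasi-isomorphism iff it is one on underlying Hochschild complexes, and then instructs the reader to replace $\tau$ by $\tau_m$ and $\oC^{II}$ by $\oMC^{II}$ in diagram~\eqref{eqn: big diag}. Your write-up is considerably more detailed in spelling out why $\tau_m$ is a morphism of mixed complexes (the scalar observation) and in tracking the eigen-decomposition via Remark~\ref{rmk: bar Tr}, but this is exactly the content the paper leaves implicit.
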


\begin{proof}
By the definition, we need to show that $\tau _m$ is a quasi-isomorphism between Hochschild-type chain complexes. 
Replacing $\tau$ by $\tau _m$ and $\oC ^{II}$ by $\oMC ^{II}$  in diagram \eqref{eqn: big diag} we conclude the proof.
\end{proof}

\subsection{Global case}\label{sub: global hkr}
Let $\underline{\cX}$ denote the coarse moduli space of $\cX$. For an \'etale map $V\to \underline{\cX}$ 
let $\cX_V := V \ti _{ \underline{\cX} } \cX$. 
We take the sheafification $\underline{\oMC} ^{II} (q\MFdg (\cX, w))$
(resp. $\underline{\oMC} ^{II} (q\MF _{I\cX, w})$) of the presheaf 
\[ V\mapsto \oMC ^{II} (q\MFdg (\cX _V, w)) \text{ resp. } (V\mapsto \Gamma (I\cX _V, \oMC ^{II} (q\MF_{I\cX, w} (-))) \] 
both on the \'etale site of  $\underline{\cX}$.
We take the sheaf homomorphism $\underline{\tau}_m$ induced from $\tau_m$
\[  \underline{\tau}_m :      \underline{\oMC}^{II} (q\MFdg (\cX ,w ) )  \to    \underline{\oMC}^{II} (q\MF _{I\cX, w} (-)) .    \] 
\begin{lemma}\label{lem:HKRI} Suppose that $\cX$ is smooth over $k$.  
Then the induced morphism $\RR\Gamma(\underline{\tau}_m)$ fits into a diagram of isomorphisms in the derived category of mixed complexes:
\begin{equation} \label{eqn: combined maps}
\xymatrix{ \oMC (\MFdg (\cX , w)) \ar[d]_{\sim}^{ } &     \RR\Gamma ( \underline{\oMC}^{II}  ( \cO _{I\cX} , -w|_{I\cX})) \ar[d]^{\sim} \\
 \RR\Gamma \underline{\oMC}^{II}  (q\MFdg (\cX, w) ) \ar[r]^{\sim}_{\RR\Gamma (\underline{\tau}_m)}  &  \RR\Gamma \underline{\oMC}^{II}  
 (q\MF_{I\cX, w} (-)) .
}  \end{equation} 
\end{lemma}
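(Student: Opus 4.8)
The plan is to reduce the global statement to the local case already established in Proposition~\ref{prop: mixed}, using a sheaf-theoretic descent argument on the \'etale site of the coarse moduli space $\underline{\cX}$. First I would note that all four objects appearing in the diagram are sheaves (or complexes of sheaves) on the small \'etale site of $\underline{\cX}$, and that being a quasi-isomorphism is a local property on this site: a morphism of complexes of sheaves is a quasi-isomorphism if and only if it induces an isomorphism on all cohomology sheaves, and the cohomology sheaves are computed \'etale-locally. Since $\cX$ is a finite type separated DM stack, it is \'etale-locally on $\underline{\cX}$ of the form $[\Spec A / G]$ for a finite group $G$; see the conventions section and the local description of the inertia stack. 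Thus it suffices to check that $\underline{\tau}_m$ is a stalkwise (or local-sections-wise on a cofinal system of \'etale neighborhoods) quasi-isomorphism, which is exactly the content of Proposition~\ref{prop: mixed} applied to $\cX_V = [\Spec A/G]$, together with Remark~\ref{rmk: ind cover} guaranteeing independence of the chosen affine \v{C}ech cover up to quasi-isomorphism.

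The next step is to identify the two vertical maps in the diagram. For the left vertical arrow $\oMC(\MFdg(\cX,w)) \xrightarrow{\sim} \RR\Gamma\,\underline{\oMC}^{II}(q\MFdg(\cX,w))$, I would combine three inputs: (i) the comparison $\MC \to \MC^{II}$ and its normalized variant, which is a quasi-isomorphism for cdg categories by the invariance of the natural projections (\S~\ref{sub: quasi-Morita}, \cite{PP}); (ii) the quasi-Morita invariance identifying $\oMC^{II}(\MFdg(\cX,w))$ with $\oMC^{II}(q\MFdg(\cX,w))$, since matrix factorizations sit inside quasi-modules as a pseudo-equivalence; and (iii) a \v{C}ech-to-derived-functor spectral sequence (or hypercohomology descent) argument showing that $\oMC^{II}$ of the \v{C}ech model computes $\RR\Gamma$ of the sheafified presheaf $\underline{\oMC}^{II}(q\MFdg(\cX,w))$ on $\underline{\cX}$ --- here one uses that $\cX \to \underline{\cX}$ is cohomologically affine so the truncated \v{C}ech complex of Remark~\ref{Rmk: truncation} genuinely computes sheaf cohomology, and that the presheaf is already a sheaf up to quasi-isomorphism by the standard descent for quasi-coherent cohomology. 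For the right vertical arrow I would argue similarly, now on the stack $I\cX$ (or rather its coarse space), identifying $q\MF_{I\cX,w}(-)$ evaluated on the identity component-type covers with the CDG algebra $(\cO_{I\cX}, -w|_{I\cX})$ up to pseudo-equivalence, so that $\RR\Gamma\,\underline{\oMC}^{II}(q\MF_{I\cX,w}(-)) \cong \RR\Gamma\,\underline{\oMC}^{II}(\cO_{I\cX}, -w|_{I\cX})$.

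With the two verticals recognized as isomorphisms in the derived category of mixed complexes, it remains to check that $\RR\Gamma(\underline{\tau}_m)$ is an isomorphism. By the localness discussed above this is immediate from Proposition~\ref{prop: mixed}: the sheaf map $\underline{\tau}_m$ restricts on each \'etale neighborhood $V \to \underline{\cX}$ with $\cX_V = [\Spec A/G]$ to the map $\tau_m$, which is a quasi-isomorphism there; since quasi-isomorphism of complexes of sheaves is detected on cohomology sheaves and these are computed locally, $\underline{\tau}_m$ is a quasi-isomorphism of complexes of sheaves, and $\RR\Gamma$ of a quasi-isomorphism is a quasi-isomorphism. One subtlety to address is that $\tau_m$ is built from a \v{C}ech cover of $\cX$ and one must check its compatibility with restriction along \'etale maps $V \to \underline{\cX}$ and with refinement of covers --- this is where Remark~\ref{rmk: ind cover}, the Eilenberg--Moore comparison theorem argument for bounded \v{C}ech filtrations, and the naturality of $nat$, $\mathfrak{tw}$, and pullback in the definition \eqref{eqn: tau} of $\tau$ all get invoked.

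The main obstacle I expect is not any single hard computation but the bookkeeping required to make the descent/sheafification genuinely rigorous: one has to be careful that ``sheafification'' of a presheaf of mixed complexes is performed in a homotopy-coherent way (e.g. via injective resolutions or a projective model structure on presheaves of mixed complexes), that $\RR\Gamma$ is the correct derived global sections in that model, and that the various pseudo-equivalences and \v{C}ech quasi-isomorphisms assemble into maps of sheaves and not merely objectwise quasi-isomorphisms without coherence. Concretely, the delicate point is step (iii) above: proving that the \v{C}ech model $\oMC^{II}(q\MFdg(\cX,w))$ --- which is a global object, not manifestly a derived global section --- does compute $\RR\Gamma$ of the sheafified presheaf. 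I would handle this by choosing the \v{C}ech cover $\fU \to \cX$ to refine the pullback of an \'etale cover of $\underline{\cX}$, filtering by \v{C}ech degree (bounded by $\dim\cX$ after truncation), and comparing the resulting spectral sequence with the hypercohomology spectral sequence of the sheafified presheaf, using cohomological affineness of $\cX \to \underline{\cX}$ at the $E_1$-page exactly as in Remark~\ref{Rmk: truncation}.
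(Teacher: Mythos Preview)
Your overall architecture---reduce to the local quotient case for $\RR\Gamma(\underline{\tau}_m)$ via Proposition~\ref{prop: mixed}, and handle the right vertical by the Yoneda pseudo-equivalence---matches the paper. The difficulty lies entirely in the left vertical arrow, and here your argument has a genuine gap.

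Your step (i) asserts that the comparison $\MC \to \MC^{II}$ is a quasi-isomorphism ``by the invariance of the natural projections.'' That is a misreading: the invariance results in \cite{PP} and \S~\ref{sub: quasi-Morita} concern the \emph{normalization} projections $\MC(\cD)\to\oMC(\cD)$ and $\MC^{II}(\cA)\to\oMC^{II}(\cA)$, not the passage from the first kind to the second kind. The map $\oMC\to\oMC^{II}$ is a genuinely different comparison and is \emph{not} a quasi-isomorphism for arbitrary cdg categories; establishing it here requires the smoothness hypothesis and is precisely the content of \cite[Theorem~6.9]{CKK1}, which the paper invokes in the local case $\cX=[X/G]$. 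Your step (ii) is also not correct as stated: the inclusion $\MFdg(\cX,w)\hookrightarrow q\MFdg(\cX,w)$ is not a pseudo-equivalence (quasi-modules are not generated up to twist, shift, and summand by factorizations), so quasi-Morita invariance does not directly apply to it.

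The paper's route for the left vertical avoids both issues by working in the other order: it first invokes \cite[Corollary~4.6]{H-LP-Arxiv}, which says the presheaf $V\mapsto \oC(\MFdg(\cX_V,w))$ already satisfies \'etale descent on $\underline{\cX}$, so the global first-kind complex \emph{is} $\RR\Gamma$ of its sheafification. This replaces your spectral-sequence step (iii) with an off-the-shelf descent result. The comparison with the second-kind sheaf $\underline{\oMC}^{II}(q\MFdg(\cX,w))$ is then checked \'etale-locally, where it reduces to $\cX=[X/G]$ and follows from \cite[Theorem~6.9]{CKK1}. In short, the missing ingredients in your proposal are exactly these two external inputs; without them, neither the first-to-second-kind comparison nor the identification of the global \v{C}ech model with $\RR\Gamma$ is justified.
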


\begin{proof}
The right vertical map is a quasi-isomorphism by the quasi-Morita invariance and the fact that for each \'etale morphism $U\to I\cX$ 
the Yoneda embedding $(\cO _{I\cX} (U) , -w) \to  q\MF _{I\cX, w} (U)$ is a
pseudo-equivalence; see \cite[Proposition 3.25]{BW} and \cite{PP}.
It remains to show that the left vertical map is a quasi-isomorphism. Let $\pi : \cX \to \underline{\cX}$ be the coarse moduli space.
By \cite[Corollary 4.6]{H-LP-Arxiv}, the presheaf $(V\mapsto \oC (\MFdg ( V \ti _{\underline{\cX}} \cX , w))$ is a sheaf on the \'etale site of $\underline{\cX}$.
It is thus enough to show that the left vertical map is a quasi-isomorphism when $\cX = [X/G]$ for a smooth variety $X$ and a finite group $G$ which follows from \cite[Theorem 6.9]{CKK1}.
\end{proof}

\begin{Thm}\label{thm: HKR and Ch first}  Suppose that $\cX$ is smooth over $k$.
Then the isomorphism 
\begin{equation}\label{eqn: global local}  \oMC  (\MFdg (\cX , w)) \cong  \RR\Gamma ( \underline{\oMC}^{II}  ( \cO _{I\cX} , -w|_{I\cX})) \end{equation}
in the derived category of mixed complexes induces the isomorphism \[ \oMC (\MFdg (\cX , w)) \cong \RR\Gamma (\Omega ^{\bullet}_{I\cX}, -dw|_{I\cX}, ud) .\]
\end{Thm}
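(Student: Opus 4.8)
The plan is to reduce the statement to a local Hochschild--Kostant--Rosenberg computation and then sheafify, invoking the machinery already set up in the excerpt. The key point is that \eqref{eqn: global local} identifies $\oMC(\MFdg(\cX,w))$ with $\RR\Gamma(\underline{\oMC}^{II}(\cO_{I\cX},-w|_{I\cX}))$, so it suffices to produce a quasi-isomorphism of mixed complexes between $\underline{\oMC}^{II}(\cO_{I\cX},-w|_{I\cX})$, as a sheaf on the \'etale site of $\underline{\cX}$, and the sheafified twisted de Rham mixed complex $(\Omega^{\bullet}_{I\cX},-dw|_{I\cX},d)$ (with $u$ adjoined to form the negative cyclic version, which accounts for the $ud$ in the target). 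Since the morphism $\rho_{\cX}\colon I\cX\to\cX$ is finite unramified and $\cX$ is smooth, $I\cX$ is itself smooth over $k$, so the classical HKR isomorphism applies on $I\cX$ (or rather \'etale-locally on its coarse moduli space).

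The steps I would carry out, in order: \textbf{(1)} Observe that the coarse moduli space of $I\cX$ maps to that of $\cX$, and work \'etale-locally over $\underline{\cX}$, where $\cX=[X/G]$ with $X$ smooth affine and $G$ finite; then $I[X/G]\cong\bigsqcup_{g\in G/G}[X^g/\rC_G(g)]$ and each $X^g$ is smooth. \textbf{(2)} On the local model, identify $\oMC^{II}(\cO_{I\cX},-w|_{I\cX})$ with the $G$-invariants of $\bigoplus_g$ of the Hochschild mixed complex of the second kind of $(A_g,-w_g)$, which was already used in the proof of Proposition~\ref{prop: mixed} via diagram \eqref{eqn: big diag}. \textbf{(3)} Apply the curved/twisted HKR theorem for a smooth commutative algebra with a potential --- i.e.\ the quasi-isomorphism of mixed complexes $\oMC^{II}(A_g,-w_g)\simeq(\Omega^{\bullet}_{A_g},-dw_g,d)$ --- which is the classical statement of Caldararu--Tu / C\u ald\u araru, or can be cited from \cite{CKK1}; take $G$-invariants, using that over a field of characteristic zero invariants are exact, to get $(\Omega^{\bullet}_{I\cX},-dw|_{I\cX},d)$ locally. \textbf{(4)} Check naturality of these identifications in the \'etale variable so that they glue to a quasi-isomorphism of sheaves of mixed complexes on $\underline{\cX}$; then apply $\RR\Gamma$. \textbf{(5)} Pass from the $b$-$B$ (mixed) picture to the $(b+uB)$ negative cyclic picture, which under HKR becomes $(\Omega^{\bullet}_{I\cX}, ud)$ twisted by the $-dw$ term, yielding exactly $\RR\Gamma(\Omega^{\bullet}_{I\cX},-dw|_{I\cX},ud)$; combine with \eqref{eqn: global local}.

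I expect the main obstacle to be \textbf{step (4)}: verifying that the local HKR quasi-isomorphisms are natural with respect to \'etale base change on $\underline{\cX}$ and compatible with the descent data defining $I\cX$ as a stack, so that the sheafified map $\underline{\tau}_m$ of Lemma~\ref{lem:HKRI} composes with the antisymmetrization/HKR map to give a well-defined sheaf morphism. The subtlety is that the HKR map for a smooth algebra depends on choices (e.g.\ of the antisymmetrization, or of a splitting/connection), and one must either choose these functorially or argue that the induced map on cohomology sheaves is independent of the choices; the inertia decomposition $\bigsqcup_g[X^g/\rC_G(g)]$ and the eigenbundle decomposition under $\can$ must be shown to be respected. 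A secondary, more routine point is bookkeeping the degree conventions so that the curvature term $-dw|_{I\cX}$ and the Koszul differential land with the correct signs, and that the formal variable $u$ of degree $2$ is inserted consistently when passing from $\oMC^{II}$ to its $[[u]]$-completion; but given Lemma~\ref{lem:HKRI} this is largely a matter of unwinding definitions.
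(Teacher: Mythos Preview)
Your proposal is correct and mirrors the paper's proof, which is a two-liner invoking Lemma~\ref{lem:HKRI} together with the affine-orbifold HKR isomorphism of \cite{CT, Se}. Your concern about step~(4) is unwarranted: the HKR antisymmetrization map $\oMC^{II}(A,-w)\to(\Omega^{\bullet}_A,-dw,d)$ is natural in the curved algebra and involves no auxiliary choices (the connection-dependent map you have in mind is the Chern--Weil formula used later for Chern characters, not the HKR map itself), so it sheafifies on the nose.
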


\begin{proof}

The proof follows from Lemma \ref{lem:HKRI}  and the HKR-type isomorphism (\cite{CT, Se}) for affine orbifolds.
\end{proof}

\section{Chern character formulae}\label{sec: Chern char form}

Let $\cX$ be a smooth separated finite-type DM stack over $k$ and let $w : \cX \to \mathbb{A}^1_k$ be an algebraic function on $\cX$
with only critical value $0$.

\subsection{A formula via \vC ech model and Chern-Weil theory}
We fix an affine \'etale surjective morphism $\bp : \fU \to \cX$ from a $k$-scheme $\fU$ as in \S~\ref{sub: Cech model}.
Since $\fU$ is  an affine scheme over $k$, every $E|_{\fU}$ has a connection
\[ \nabla _{E|_{\fU}} : E|_{\fU} \to E|_{\fU} \ot \Omega _{\fU}^1 . \]
Define a connection \[ \nabla _{E |_{\fU^r}}  : E|_{\fU^r}   \to  E|_{\fU^r}  \ot \Omega _{\fU^r}^1 \] 
by letting $\nabla _{E |_{\fU^r}}  =  p_1^* \nabla _{E |_{\fU}}$, where $p_1$ is the first projection $\fU^r \to \fU$.
This gives rise to a connection 
\[ \nabla _E :  \vC  (E) \to \Omega ^1_{\cX} \ot \vC (E)    \]
where $\vC (E) := ( \bigoplus _{r \ge 0} \vC ^r  (E), \dC )$ and  $\vC ^r  (E)= \bp_{r*} \bp_{r}^{*} E $. For every $E$, fix such a connection once and for all. 

Let $I\fU$ denote the affine scheme $\fU \ti _{\cX} I\cX$. Using this affine covering of $I\cX$, we have the \v{C}ech resolution 
$\vC (E|_{I\cX})$ and the connection 
\[ \nabla _{ E|_{I\cX}} : \vC  (E|_{I\cX} ) \to \Omega ^1_{I\cX} \ot  \vC  (E|_{I\cX}) .\]
In general, for every vector bundle $F$ on $I\cX$, we can choose a connection
$\nabla _F :  \vC  (F ) \to\Omega ^1_{I\cX} \ot  \vC  (F) $.

 For each $P\in q\MFdg  (I\cX, w) )$, choose a connection $\nabla _{P}$ as above once and for all. 
 Let $R = u \nabla _{\nabla}^2 + [\nabla _{P} , \delta _{\vC (P)} ]$ a kind of the total curvature of $\nabla _P$.
By a straightforward generalization of the definition of a chain map $\tr _{\nabla}$ in \cite{CKK1} to  the stacky case, we obtain
a $k[[u]]$-linear map 
\[       \tr _{\nabla, I\cX} :                   C (q\MFdg  (I\cX, w) ) [[u]] \to           \Gamma (   \vC (\Omega ^{\bullet}_{I\cX} ) )  [[u]] \]
mapping $a_0[a_1| \cdots | a_n]$ for $a_i \in \End _{\vC (P)} (P)$, $P\in q\MFdg  (I\cX, w) )$  to 
\[  \sum _{(j_0, ..., j_n) : j_i \in \ZZ _{\ge 0}} \frac{(-1)^{|j_0 + \cdots +  j_n| }\tr (a_0 R^{j_0} [\nabla _{P}, a_1] R^{j_1} [\nabla _{P}, a_2] \cdots [\nabla _{P}, a_n] R^{j_n} ) }
{(n+|j_0 + \cdots +  j_n|)!}  . \]
It is clear how to map an arbitrary element of  $C (q\MFdg  (I\cX, w) ) [[u]]$.
 By the same proof in \cite[Appendix B]{CKK1}     the map   $\tr _{\nabla, I\cX} $ is a chain map.
Likewise we have a chain map     \[  \tr_{\nabla}  :    \underline{\oC}^{II}  (q\MF_{I\cX, w} ) [[u]]  \to p_*\vC (\Omega ^{\bullet}_{I\cX} )  [[u]]   ,\]
where $p : I\cX \to \underline{\cX}$ is the natural morphism. 

Consider a diagram of chain maps in negative cyclic type complexes 
\begin{equation}
\xymatrix{ C (q\MFdg  (\cX, w) ) [[u]] \ar[d] \ar[r]^{ \mathfrak{tw} \circ \rho _{\cX}^*} &   C (q\MFdg  (I\cX, w) ) [[u]]   \ar[r]^{\tr_{\nabla, I\cX}} &           \Gamma (   \vC (\Omega ^{\bullet}_{I\cX} ) )  [[u]] \ar[d] \\
 \RR\Gamma \underline{\oC}^{II}  (q\MFdg (\cX, w) ) [[u]] \ar[r]^{\sim}_{\ \RR\Gamma (\underline{\tau}_m)} &      \RR\Gamma \underline{\oC}^{II}  (q\MF_{I\cX, w} ) [[u]]    \ar[r]_{\ \ \ \RR\Gamma (\tr_{\nabla} )  }  &   
  \RR\Gamma (\vC (\Omega ^{\bullet}_{I\cX} ) ) [[u]]   .
}
\end{equation} Note that the diagram is commutative and all arrows possibly but two top horizontal arrows are quasi-isomorphisms.
Hence we have the following corollaries.

\begin{Cor} The chain map
\[  \tr_{\nabla, I\cX} \circ  \mathfrak{tw} \circ \rho _{\cX}^*: C (q\MFdg  (\cX, w) ) [[u]]  \to     \Gamma (   \vC (\Omega ^{\bullet}_{I\cX} ) ) [[u]]  \]
is a quasi-isomorphism.
\end{Cor}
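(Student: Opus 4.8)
The plan is to deduce the Corollary directly from the commutative diagram of chain maps in negative-cyclic-type complexes that immediately precedes it, by a standard "two-out-of-three" argument together with the quasi-isomorphisms already established. First I would observe that the left vertical arrow $C(q\MFdg(\cX,w))[[u]] \to \RR\Gamma\,\underline{\oC}^{II}(q\MFdg(\cX,w))[[u]]$ is a quasi-isomorphism: this is the $[[u]]$-linear upgrade of the left vertical map in Lemma~\ref{lem:HKRI} (equivalently, of the isomorphism \eqref{eqn: global local} in Theorem~\ref{thm: HKR and Ch first}), obtained by passing from the mixed-complex statement to the associated negative cyclic complexes, which is exact because tensoring with $k[[u]]$ and taking totalizations commutes with quasi-isomorphisms of mixed complexes (here one also uses the invariance of the normalization map $C\to\oC$ and $C^{II}\to\oC^{II}$ recalled in the Foundational Facts section, to move freely between $C$ and $\oC^{II}$). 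Similarly the middle vertical arrow $C(q\MFdg(I\cX,w))[[u]] \to \RR\Gamma\,\underline{\oC}^{II}(q\MF_{I\cX,w})[[u]]$ is a quasi-isomorphism by the same reasoning applied to $I\cX$, combined with the quasi-Morita invariance and the pseudo-equivalence of the Yoneda embedding $(\cO_{I\cX},-w|_{I\cX}) \to q\MF_{I\cX,w}$ used in the proof of Lemma~\ref{lem:HKRI}.

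Next I would record that the bottom row consists of quasi-isomorphisms: the left bottom arrow $\RR\Gamma(\underline{\tau}_m)$ is a quasi-isomorphism by Lemma~\ref{lem:HKRI} (its $[[u]]$-linear version), and the right bottom arrow $\RR\Gamma(\tr_\nabla)$ is a quasi-isomorphism because $\tr_\nabla$ is, locally on the coarse space $\underline{\cX}$, the classical HKR/Chern--Weil quasi-isomorphism for the affine-orbifold twisted Hodge complex — this is the input already invoked in the proof of Theorem~\ref{thm: HKR and Ch first} (the affine HKR-type isomorphism of \cite{CT, Se}, now with the $dw$-twist), and sheafifying over $\underline{\cX}$ and applying $\RR\Gamma$ preserves it since all stacks in sight have finite cohomological dimension. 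Finally the right vertical arrow, which is the natural map from the \v Cech model $\Gamma(\vC(\Omega^\bullet_{I\cX}))[[u]]$ to $\RR\Gamma(\vC(\Omega^\bullet_{I\cX}))[[u]]$ computing twisted Hodge cohomology of $I\cX$, is a quasi-isomorphism by the usual \v Cech-to-derived-global-sections comparison (again using finite cohomological dimension and the truncation remarks of \S~\ref{sub: Cech model}).

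Granting the commutativity of the displayed diagram (which is asserted there and follows from naturality of $\tr_\nabla$ in $\cX$ versus $I\cX$ together with the compatibility of $\mathfrak{tw}$ with the maps $\tau_m$, $\underline{\tau}_m$), the composite along the bottom, $\RR\Gamma(\tr_\nabla)\circ\RR\Gamma(\underline{\tau}_m)$, is a quasi-isomorphism, and it equals (up to the two outer vertical quasi-isomorphisms) the composite along the top, $\tr_{\nabla,I\cX}\circ\mathfrak{tw}\circ\rho_\cX^*$. Since the left vertical, the right vertical, and the bottom composite are all quasi-isomorphisms, the top composite is a quasi-isomorphism as well. This is precisely the assertion of the Corollary.

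The main obstacle I anticipate is not any single step but the bookkeeping needed to pass cleanly from the mixed-complex statements of Lemma~\ref{lem:HKRI} and Theorem~\ref{thm: HKR and Ch first} to their $k[[u]]$-linear negative-cyclic incarnations while keeping track of which model ($C$, $\oC$, $C^{II}$, $\oC^{II}$, sheafified or not) one is working in at each corner, and ensuring the square genuinely commutes on the nose rather than up to homotopy — in particular verifying that $\mathfrak{tw}\circ\rho_\cX^*$ is compatible with $\underline{\tau}_m$ after sheafification over $\underline{\cX}$. Everything else is either quoted from earlier in the paper or is the routine "two-out-of-three" conclusion.
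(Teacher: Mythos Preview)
Your proposal is correct and follows exactly the paper's approach: the paper simply observes that the displayed rectangle commutes and that all arrows except the two top horizontal ones are quasi-isomorphisms, so the top composite is a quasi-isomorphism by two-out-of-three. Your only superfluity is the discussion of a ``middle vertical arrow,'' which does not appear in the paper's diagram and is not needed for the argument (you correctly drop it in your final paragraph anyway).
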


\begin{Cor}\label{cor: ch hn form}
Under the isomorphism in \eqref{eqn: combined maps}, the Chern character $\ch _{HN} (P)$ of $P \in \MFdg (\cX, w)$
 is the class represented by \v{C}ech cocycle 
\begin{equation*}\label{eqn: stack HH ch form} 
   \tr \left(    \can _{P |_{I\cX}}  \circ \exp ( - u \nabla _{P |_{I\cX}}^2  - [\nabla_{P |_{I\cX}} , \delta _{P|_{I\cX}}  + \dC ] ) \right)  \end{equation*}
in  $\check{H} (I\fU, (\Omega ^{\bullet}_{I\cX}, -dw|_{I\cX})) $.  
\end{Cor}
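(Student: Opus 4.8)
The statement to be proven is Corollary~\ref{cor: ch hn form}: a \v{C}ech-level representative of the Chern character $\ch_{HN}(P)$ under the isomorphism \eqref{eqn: combined maps}. The plan is to trace the class $\Ch_{HN}(P)$ through the chain-level maps in the big commutative diagram just preceding the corollary. Recall that $\Ch_{HN}(P)$ is by definition the cycle in the normalized negative cyclic complex $\oC(\MFdg(\cX,w))[[u]]$ represented by the identity morphism $1_P$ (see \S on categorical Chern characters). The diagram relates $C(q\MFdg(\cX,w))[[u]]$ (via the Morita-type comparison $\MFdg \hookrightarrow q\MFdg$ and normalization, all quasi-isomorphisms) to $\Gamma(\vC(\Omega^\bullet_{I\cX}))[[u]]$ through the top row $\tr_{\nabla, I\cX}\circ \mathfrak{tw}\circ \rho_\cX^*$, and the bottom row realizes the isomorphism of \eqref{eqn: combined maps} composed with $\RR\Gamma(\tr_\nabla)$. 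Since the diagram commutes and all vertical maps plus the bottom horizontal maps are quasi-isomorphisms, the image of $\ch_{HN}(P)$ under \eqref{eqn: combined maps} followed by $\RR\Gamma(\tr_\nabla)$ agrees with the image of the class of $1_P$ under the top composition. Hence the first step is simply to compute $(\tr_{\nabla,I\cX}\circ\mathfrak{tw}\circ\rho_\cX^*)(1_P[\,])$ explicitly.

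\textbf{Step 2: evaluate the top composite on $1_P$.} Applying $\rho_\cX^*$ sends the Hochschild $0$-chain $1_P[\,]$ (length zero, so $a_0 = 1_P$ and no bar entries) to $1_{P|_{I\cX}}[\,]$ in $C(q\MFdg(I\cX,w))[[u]]$. The twisting map $\mathfrak{tw}$, which on a $0$-chain $a_0[\,]$ acts by $a_0 \mapsto a_0$ precomposed... more precisely on length-$n$ chains it replaces $a_n$ by $\can\circ a_n$; on the length-$0$ chain $1_{P|_{I\cX}}[\,]$ it multiplies $a_0 = 1_{P|_{I\cX}}$ by $\can_{P|_{I\cX}}$, giving $\can_{P|_{I\cX}}[\,]$ — here one must be slightly careful that for $n=0$ the convention reads off the single entry $a_0$, which is consistent with $\mathfrak{tw}$ being a cap product with $[\can]\in HH^0$. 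Then $\tr_{\nabla, I\cX}$ applied to a length-$0$ chain $a_0[\,]$, with $a_0 = \can_{P|_{I\cX}}$, produces $\sum_{j_0\ge 0} \frac{(-1)^{|j_0|}}{j_0!}\tr(a_0 R^{j_0})$ where $R = u\nabla_{P|_{I\cX}}^2 + [\nabla_{P|_{I\cX}}, \delta_{\vC(P|_{I\cX})}]$ is the total curvature and $\delta_{\vC(P|_{I\cX})} = \delta_{P|_{I\cX}} + \dC$. This sum is exactly $\tr\big(\can_{P|_{I\cX}}\exp(-R)\big) = \tr\big(\can_{P|_{I\cX}}\exp(-u\nabla_{P|_{I\cX}}^2 - [\nabla_{P|_{I\cX}}, \delta_{P|_{I\cX}}+\dC])\big)$, matching the claimed formula. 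The sign bookkeeping (the $(-1)^{|j_0+\cdots+j_n|}$ and the factorial denominator collapsing to the exponential of $-R$) is the only genuinely computational point and follows the conventions fixed in \cite{CKK1}.

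\textbf{Step 3: identify the target with twisted Hodge cohomology.} It remains to observe that the class just computed, a priori living in $\RR\Gamma(\vC(\Omega^\bullet_{I\cX}))[[u]]$, represents a \v{C}ech cocycle in $\check{H}(I\fU, (\Omega^\bullet_{I\cX}, -dw|_{I\cX}))$: the differential on $\Gamma(\vC(\Omega^\bullet_{I\cX}))$ is the total differential combining $\dC$, the de Rham-type differential $d$ and (in the $u=0$, i.e.\ Hochschild, specialization relevant to the Chern character $\ch_{HH}$) contraction against $-dw|_{I\cX}$; the total curvature $R$ was defined precisely so that $\tr(\can\,\exp(-R))$ is closed for this differential, by the same Chern--Weil computation as in \cite[Appendix B]{CKK1}, now carried out $\mu_r$-equivariantly and \v{C}ech-wise. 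Because the right vertical map in the displayed diagram and the HKR comparison are quasi-isomorphisms compatible with these differentials, the cycle descends to the asserted class. \textbf{The main obstacle} I anticipate is not conceptual but the careful matching of signs and of the role of $\can$ for the degenerate length-$0$ chain: one must confirm that $\mathfrak{tw}$, defined on general chains by inserting $\can$ before the last bar entry, acts on $1_P[\,]$ by the multiplication $1_P \mapsto \can_P$ (equivalently, that $\mathfrak{tw}$ is genuinely the cap product with $[\can]$ as asserted in \S on the role of $\can$), since the whole formula hinges on that insertion — exactly the point flagged in Remark~\ref{rem:commutative}.
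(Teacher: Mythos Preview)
Your proposal is correct and follows essentially the same approach as the paper, which offers no explicit proof beyond ``Hence we have the following corollaries'' after the commutative diagram: you simply chase the class of $1_P$ through the top row $\tr_{\nabla,I\cX}\circ\mathfrak{tw}\circ\rho_\cX^*$ and evaluate the $n=0$ case of the $\tr_{\nabla,I\cX}$ formula to obtain $\tr(\can_{P|_{I\cX}}\exp(-R))$. Your flagged concern about the $n=0$ convention for $\mathfrak{tw}$ is legitimate but is resolved exactly as you say, by the cap-product description of $\mathfrak{tw}$; and Step~3 is in fact unnecessary, since closedness is automatic from $\tr_{\nabla,I\cX}$ being a chain map.
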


\medskip

\begin{EG}\label{EG: ch} Consider a DM stack $\cX$ of the form $[X/G]$ with $X$ quasi-projective and $G$ a linearly reductive group.
Then there is a finite collection $\fU=\{ U_i \}_{i \in I}$ of  $G$-invariant affine open subset $U_i$ of $X$ such that $\bigcup _i U_i = X$. On the other hand
there is a finite subset $S$ of $G$ such that 
\[ I\cX = \sqcup _{g \in S} [ X^g / \rC _G (g ) ]. \]
Note that $I\fU = \{ U_i ^g : i \in I, g \in S \} $. Instead of affine \'etale covering we may use the affine smooth covering $\fU$ 
for a \v{C}ech model of $q\MFdg (\cX, w)$ and the chain map $\tr _{\nabla , I\cX}$.
Since $G$ is linearly reductive, each $P|_{U_i}$ has a $G$-equivariant connection, which in turn gives 
a $\rC _{G}(g)$-equivariant connection $\nabla _{i, g}$ on $P|_{U^g_i}$. This is because of the surjection of the canonical map $\Hom ^G_{\cO _{X}} (E, J(E)) \to \Hom ^G_{\cO _{X}} (E, E)$ induced from the jet sequence 
\[ 0 \to \Omega ^1_X \ot_{\cO_X} E \to J(E) \to E \to 0 . \] 
We have
\[ \ch_{HN} (P) =   \oplus _{g\in S} \tr \left(    g  \circ \exp ( - u \Pi _{i \in I} \nabla _{i, g}^2  - [\Pi _{i\in I}  \nabla_{i, g}, \delta _{P|_{U^g}} + \dC  ] ) \right) . \]
When $X$ itself is affine, then the formula simplifies to
\[ \ch_{HN} (P) =   \oplus _{g\in S} \tr \left(    g  \circ \exp ( - u  \nabla _{g}^2  - [ \nabla_{ g}, \delta _{P|_{U^g}}  ] ) \right) , \]
taking into account the fact that $[\nabla _{g}, \dC ] = 0$. 
\end{EG}

Let  $a \in \oplus _i \RR ^i\End (P)$, then it determines a class in $H^* (I\cX , (\Omega ^{\bullet}_{I\cX}, -dw|_{I\cX} ))$ under the HKR isomorphism.
Denote the class by  $\tau (a)$. The assignment $a \mapsto \tau (a)$ is called the boundary-bulk map.

\begin{Cor}       { (The boundary-bulk map formula)} 
\[ \tau (a) =  \tr \left(  a \circ   \can _{P |_{I\cX}}  \circ \exp (  - [\nabla_{P |_{I\cX}} , \delta _{P|_{I\cX}}  + \dC ] ) \right) . \]
\end{Cor}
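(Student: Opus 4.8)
The plan is to deduce the boundary--bulk map formula directly from the Chern character formula of Corollary~\ref{cor: ch hn form}, using the multiplicative structure that is already available at the chain level. Recall that for $a \in \oplus_i \RR^i\End(P)$, the boundary--bulk map $\tau(a)$ is by definition the image of the class $[a] \cdot \Ch_{HH}(P)$ (the action of $HH^*$ on $HH_*$, i.e. the cap product of the Hochschild cohomology class $a$ with the Chern character cycle $1_P$) under the HKR isomorphism \eqref{eqn: combined maps}; equivalently, $a$ is represented in the normalized Hochschild complex by the $0$-chain $a$ (placed in chain degree zero, so with no bar-factors), and $\tau(a)$ is represented by the cycle $a[\,]\cdot 1_P[\,] = a[\,]$ pushed through $\tr_{\nabla, I\cX}\circ \mathfrak{tw}\circ \rho_{\cX}^*$. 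So the first step is to make this definitional unwinding precise: identify $\tau(a)$ with the class of $\tr_{\nabla, I\cX}(\mathfrak{tw}(\rho_{\cX}^*(a[\,])))$ in $\check H(I\fU,(\Omega^{\bullet}_{I\cX}, -dw|_{I\cX}))$, exactly as $\ch_{HH}(P)$ was identified with the class of $\tr_{\nabla,I\cX}(\mathfrak{tw}(\rho_{\cX}^*(1_P[\,])))$ in the proof of Corollary~\ref{cor: ch hn form} (with $u$ set appropriately, since we are in the Hochschild rather than negative cyclic setting).

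Next I would evaluate $\tr_{\nabla, I\cX}$ on the $0$-chain $a_0[\,]$ with $a_0 = a|_{I\cX}$. By the explicit formula for $\tr_{\nabla,I\cX}$, a bar-chain $a_0[a_1|\cdots|a_n]$ with $n=0$ is sent to $\sum_{j_0\ge 0} \frac{(-1)^{|j_0|}}{j_0!}\tr(a_0 R^{j_0})$, i.e. to $\tr(a_0 \exp(-R))$ where $R = u\nabla_{P|_{I\cX}}^2 + [\nabla_{P|_{I\cX}}, \delta_{\vC(P|_{I\cX})}]$; here $\delta_{\vC(P|_{I\cX})} = \delta_{P|_{I\cX}} + \dC$. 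After applying $\mathfrak{tw}$, which inserts $\can_{P|_{I\cX}}$, and setting $u=0$ (or rather: working modulo $u$, as is appropriate for the Hochschild—as opposed to negative cyclic—statement, so that $u\nabla^2$ drops out), we obtain precisely
\[ \tr\!\left( a \circ \can_{P|_{I\cX}} \circ \exp\!\big(-[\nabla_{P|_{I\cX}}, \delta_{P|_{I\cX}} + \dC]\big)\right), \]
which is the claimed expression. The commutativity relation \eqref{eqn: center} guarantees $\can$ commutes with $a$, so the order of the factors inside the supertrace is immaterial.

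The one genuine point to verify—and the step I expect to be the main obstacle—is that the cycle representing $\tau(a)$ really is $\rho_{\cX}^*(a)$ placed in Hochschild chain degree $0$, i.e. that the boundary--bulk map is computed by exactly the same composite $\tr_{\nabla,I\cX}\circ\mathfrak{tw}\circ\rho_{\cX}^*$ already used for $\ch_{HH}$, restricted to $\oplus_i\RR^i\End(P) = HH^0$-type classes. This requires checking that the $HH^*$-module structure on $HH_*$ is compatible with the chain-level maps: concretely, that cupping with a class $a\in \RR\End(P)$ before applying $\tau$ agrees with multiplying the \v{C}ech-de Rham representative of $\ch_{HH}(P)$ by $\tr(a\circ(\,-\,))$ in the appropriate sense. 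This is where one uses that $\tr_{\nabla,I\cX}$ is not just a chain map but is suitably multiplicative over the cup product / Yoneda product of endomorphisms — a standard feature of Chern--Weil-type trace maps, proven in \cite{CKK1} in the local case and extended to the stacky \vC ech model verbatim. Granting that compatibility, everything else is the routine degree-$0$ specialization carried out above, and the corollary follows.
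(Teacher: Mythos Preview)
Your computation is correct and matches the paper's (implicit) argument: the paper gives no proof for this corollary because it is immediate from the same chain-level evaluation that produced Corollary~\ref{cor: ch hn form}, with the $0$-cycle $a$ in place of $1_P$ and $u$ set to $0$.

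However, you overcomplicate the framing. The boundary--bulk map here is \emph{not} defined via a cap product or an $HH^*$-module action; the paper defines it directly: a closed element $a\in\oplus_i\RR^i\End(P)$ is already a Hochschild $0$-cycle $a[\,]$, and $\tau(a)$ is by definition its image under the HKR isomorphism. So your ``one genuine point to verify'' in the last paragraph --- compatibility of the chain maps with the $HH^*$-module structure, multiplicativity of $\tr_{\nabla,I\cX}$ over the Yoneda product --- is not needed at all. There is nothing to check beyond evaluating $\tr_{\nabla,I\cX}\circ\mathfrak{tw}\circ\rho_{\cX}^*$ on the single chain $a[\,]$, which you do correctly in your second paragraph. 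The detour through $[a]\cdot\Ch_{HH}(P)$ and the identity $a[\,]\cdot 1_P[\,]=a[\,]$ is harmless but obscures that the argument is a one-line specialization.
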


\subsection{A formula via Atiyah class}\label{sub: Atiyah}
Let $P \in \MFdg (\cX, w)$ and let  
  \begin{equation*} 
\Omega ^{-dw} _{\cX} := [ \xymatrix{  \ar@/^.2pc/[r]^{0} \Omega ^1 _{\cX} & \cO _{\cX} \ar@/^.2pc/[l]^{-dw\wedge}    } ]  \end{equation*}
  be the matrix factorization for $(\cX, 0)$ located at amplitude $[-1, 0 ]$.
The Atiyah class $\hat{\at} (P)$ defined  in \cite[Appendix B]{FK} is 
a suitable element of \[ \Ext ^1 (P,  P \ot  \Omega ^{-dw} _{\cX} ) . \]

When $w=0$, 
we have the decomposition 
\[ \Ext ^1 (P,  P \ot  \Omega ^{-dw} _{\cX} ) =  \Ext ^0 (P, P) \oplus \Ext ^1 (P, P\ot \Omega ^1_{\cX})  \] 
and let  \[ \at (P) = proj \circ  \hat{\at} (P) \in    \Ext ^1 (P, P\ot \Omega ^1_{\cX}) ,\] where $proj$ is the projection.
For example when $P$ is a vector bundle $F$ and $\cX$ is non-stacky, then $\at (F)$ is the usual Atiyah class \cite{At}.
In this case $\hat{\at} (F) = 1_F + \at (F)$.

\begin{Def} Taking into account the convention of  the exponential $\underline{\exp}$ of $\hat{P}$ as explained in \cite{FK, KP},
 we define  a naive Chern character of $P$ by
\[ \ch (P) := \tr  \big( \underline{\exp} (\hat{\at} (P)) \big) \in H^* (\cX, ( \Omega ^{\bullet}_{\cX}, -dw)) . \]
For simplicity we abuse notation writing $\exp (\hat{\at} (P))$ for $\underline{\exp} (\hat{\at} (P))$.
\end{Def}
The correct formula for $\ch _{HH} (P)$  in \cite{KP2} is 
\begin{align}   \label{eqn: ch at form}    \ch _{HH} (P)  & = \tr \left(  \can_{P|_{I\cX}} \circ \exp (\hat{\at} (P|_{I\cX}) ) \right) \\
                                  \nonumber           & = \ch (P) + \text{ twisted part } .     \end{align}
We note that this formula  agrees with the formula in Corollary  \ref{cor: ch hn form}, since Atiyah class  $\hat{\at} (P|_{I\cX})$ is representable as
\[      (\mathrm{id}_P, - [\nabla_{P |_{I\cX}} , \delta _{P |_{I\cX}}  + \dC ]   )  \in \Gamma (I\cX, \End (P|_{I\cX}) \ot \Omega ^{-dw} _{\cX}\ot \vC (\cO _{IX}))    \]
 in \vC ech cohomology $\check{H} (I\fU, \Omega ^{\bullet}_{I\cX}, -dw|_{I\cX}) $; see \cite[Proposition 1.3]{KP}
 and 
 \[ \underline{\exp}  (\mathrm{id}_P, - [\nabla_{P |_{I\cX}} , \delta _{P |_{I\cX}}  + \dC ]   )  = \exp  (- [\nabla_{P |_{I\cX}} , \delta _{P |_{I\cX}}  + \dC  ]) . \]
The boundary bulk map formula can also be written in terms of the Atiyah class:
\[        \tau (a) =   \tr \left( a \circ \can_{P|_{I\cX}} \circ \exp (\hat{\at} (P|_{I\cX})  ) \right) .  \]

\begin{Def} For a vector bundle $E$ on $I\cX$ we define 
\[ \ch _{tw} (E) := \tr (\can _E \exp (\at (E))  \]
and Todd class $\td _{tw} (E)$ of $E$ by the expression of Todd class in terms of the Chern character $\ch _{tw} (E)$. 
For example, $\td _{tw} (T_{I\cX})$ is defined. Since $T_{I\cX}$ is fixed under the canonical automorphism, we simply
write $\td (T_{I\cX})$ for $\td _{tw} (T_{I\cX})$.
\end{Def}

\subsection{Proof of Theorem \ref{thm: HKR and Ch}}
The first statement of Theorem \ref{thm: HKR and Ch} is Theorem \ref{thm: HKR and Ch first}.
The second statement follows from \eqref{eqn: ch at form}.

\subsection{Compactly supported case} Let $Z$ be a closed substack of $\cX$ proper over $k$.
Let $P$ be a matrix factorization for $(\cX, w)$ which is coacyclic over $\cX - Z$.
Note that \[ \hat{\at} (P|_{I\cX}) \in  \Ext ^1 (P|_{I\cX},  P|_{I\cX} \ot  \Omega ^{-dw|_{I\cX}} _{I\cX} ) = \Ext ^1 _{IZ} (P|_{I\cX},  P|_{I\cX} \ot  \Omega ^{-dw|_{I\cX}} _{I\cX} ) . \]
To emphasize that $\hat{\at} (P|_{I\cX}) $ can be considered as an $IZ$-supported extension class  write $\hat{\at}_Z (P|_{I\cX}) $ for $\hat{\at} (P|_{I\cX}) $.
Let $\MFdg (\cX, w))_Z$ be the full subcategory of $\MFdg (\cX, w)$ consisting of all matrix factorization for $(\cX, w)$ that are coacyclic over $\cX - Z$.

\begin{Cor} 
There is an isomorphism 
\[ \MC (\MFdg (\cX, w))_Z \cong \RR\Gamma _Z (\Omega ^{\bullet}_{I\cX}, -dw|_{I\cX} , d)  \]
in the derived category of mixed complexes.
Under the isomorphism $\ch ^Z_{HH} (P)$ is 
equal to
\[ \tr \big( \can _{P|_{I\cX}} \exp ( \hat{\at}_Z (P |_{I\cX} )) \big)  \] 
in $\HH ^*_{IZ} ( I\cX, (\Omega ^{\bullet}_{I\cX}, -dw|_{I\cX} ) )$. 
\end{Cor}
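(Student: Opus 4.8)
The plan is to reduce the compactly supported statement to the absolute statement already established in Theorem~\ref{thm: HKR and Ch}, using localization in cyclic homology. First I would recall from \S~\ref{subsub: localization} that an exact sequence of (exact) dg categories induces an exact triangle of mixed complexes. The short exact sequence we have in mind is
\[
\MFdg(\cX, w)_Z \to \MFdg(\cX, w) \to \MFdg(\cX - Z, w|_{\cX - Z}),
\]
where the second functor is restriction to the open complement $\cX - Z$; its kernel is precisely the full subcategory of matrix factorizations that become coacyclic (equivalently locally contractible) on $\cX - Z$, which is the definition of $\MFdg(\cX, w)_Z$. One must check that this really is an exact sequence of dg categories in the sense needed for the localization statement (i.e.\ that $\MFdg(\cX-Z, w)$ is, up to Morita equivalence, the Verdier/dg quotient of $\MFdg(\cX, w)$ by $\MFdg(\cX, w)_Z$); this is a standard gluing/localization fact for (co)derived categories of matrix factorizations, and I would cite it from \cite{Pos: two, PV: Sing, CFGKS}.

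Next I would apply Theorem~\ref{thm: HKR and Ch} (equivalently Theorem~\ref{thm: HKR and Ch first}) to both $\cX$ and the open substack $\cX - Z$, obtaining compatible quasi-isomorphisms
\[
\MC(\MFdg(\cX, w)) \cong \RR\Gamma(\Omega^{\bullet}_{I\cX}, -dw|_{I\cX}, d), \qquad
\MC(\MFdg(\cX - Z, w)) \cong \RR\Gamma(\Omega^{\bullet}_{I(\cX - Z)}, -dw, d).
\]
The key compatibility point is that the HKR isomorphism is natural with respect to (\'etale, or more generally open) restriction morphisms of the base stack: the construction of $\tau_m$ via pullback along $\rho_\cX$, twisting by $\can$, and sheafification over the coarse space is manifestly functorial in $\cX$, so restriction to $\cX - Z$ commutes with the isomorphism up to homotopy. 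Granting this naturality, the localization triangle for the categorical side matches the triangle
\[
\RR\Gamma_{IZ}(\Omega^{\bullet}_{I\cX}, -dw|_{I\cX}, d) \to \RR\Gamma(\Omega^{\bullet}_{I\cX}, -dw|_{I\cX}, d) \to \RR\Gamma(\Omega^{\bullet}_{I(\cX - Z)}, -dw, d)
\]
on the geometric side, where one uses $I(\cX - Z) = I\cX - IZ$ and the fact that $IZ$ is the preimage of $Z$ under the finite morphism $\rho_\cX$, so $IZ$ is proper over $k$ as well. The five lemma (in the triangulated category of mixed complexes) then yields the desired isomorphism on the fibers $\MC(\MFdg(\cX,w))_Z \cong \RR\Gamma_{IZ}(\Omega^{\bullet}_{I\cX}, -dw|_{I\cX}, d)$.

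For the Chern character statement, I would argue that the compactly supported Chern character $\ch^Z_{HH}(P)$ is, by construction, simply the image of the ordinary $\ch_{HH}(P)$ under the (injective on the relevant class) map $\HH^*_{IZ}(I\cX, -) \to \HH^*(I\cX, -)$, or more precisely is the natural lift of $\ch_{HH}(P)$ to the $IZ$-supported cohomology provided by the localization triangle, since $P$ restricts to something coacyclic on $\cX - Z$ and hence $\ch_{HH}(P|_{\cX - Z}) = 0$. On the geometric side, the Atiyah class $\hat{\at}(P|_{I\cX})$ already lives in $\Ext^1_{IZ}(P|_{I\cX}, P|_{I\cX} \ot \Omega^{-dw}_{I\cX})$ — this is the observation $\hat{\at}(P|_{I\cX}) = \hat{\at}_Z(P|_{I\cX})$ noted just before the corollary, which holds because $P|_{\cX - Z}$ is (locally) contractible and so its endomorphism complex, hence its Atiyah class, vanishes there. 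Therefore the formula $\tr(\can_{P|_{I\cX}} \exp(\hat{\at}_Z(P|_{I\cX})))$ naturally represents a class in $\HH^*_{IZ}$, and it maps to $\tr(\can_{P|_{I\cX}} \exp(\hat{\at}(P|_{I\cX}))) = \ch_{HH}(P)$ under $\HH^*_{IZ} \to \HH^*$. Combining this with the uniqueness of lifts along the localization triangle (a lift of $\ch_{HH}(P)$ to $\HH^*_{IZ}$ is unique once we know $\ch_{HH}(P|_{\cX-Z}) = 0$, modulo the image of the connecting map, which here vanishes for degree/support reasons) gives the second assertion.

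The main obstacle I expect is the functoriality/compatibility of the HKR isomorphism under open restriction together with the identification of the categorical localization sequence with the geometric local-cohomology triangle: one has to track carefully that the dg quotient $\MFdg(\cX, w) / \MFdg(\cX, w)_Z$ is quasi-equivalent to $\MFdg(\cX - Z, w)$ (so that \S~\ref{subsub: localization} applies and delivers the correct third term), and that all of $\tau_m$, the sheafification over the coarse moduli space, and the trace map $\tr_{\nabla, I\cX}$ of \S~\ref{sec: Chern char form} commute with restriction to $\cX - Z$ up to coherent homotopy — the latter is essentially built into the sheaf-theoretic construction but requires a remark. Everything else (properness of $IZ$, vanishing of $\ch_{HH}$ on the contractible restriction, the support statement for $\hat{\at}$) is either immediate or already recorded in the excerpt.
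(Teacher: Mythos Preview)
Your approach to the first statement via the localization triangle in cyclic homology is correct and is essentially what the paper's terse ``immediately follows from Theorem~\ref{thm: HKR and Ch}'' is pointing to.

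For the second statement there is a genuine gap. You compare $\ch^Z_{HH}(P)$ and $\tr\big(\can_{P|_{I\cX}}\exp(\hat{\at}_Z(P|_{I\cX}))\big)$ only after mapping both into the unsupported cohomology $\HH^*(I\cX,(\Omega^\bullet_{I\cX},-dw))$, where they agree by Theorem~\ref{thm: HKR and Ch}. But the map $\HH^*_{IZ}\to\HH^*$ is not injective in general: its kernel is the image of the connecting homomorphism $\HH^{*-1}(I\cX\setminus IZ)\to\HH^*_{IZ}$ from the local cohomology triangle, and your assertion that this image vanishes ``for degree/support reasons'' is unsupported --- already for $w=0$ this is the ordinary local cohomology long exact sequence, whose connecting map is typically nonzero. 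So you have only shown that the two classes differ by an element of this kernel, not that they coincide.

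The paper sidesteps this ambiguity by working at the chain level rather than in cohomology. The explicit chain map $\tr_{\nabla,I\cX}\circ\ftw\circ\rho_{\cX}^*$ of \S~\ref{sec: Chern char form} (with the refinement in \cite[\S~6.2]{CKK1}) sends the cycle $1_P$ to a specific \v{C}ech cocycle which, because $P$ is contractible off $Z$, lands in the $IZ$-supported subcomplex and represents the Atiyah class expression there. Since a compatible restriction of this same chain map realises the supported HKR isomorphism of the first statement, it computes $\ch^Z_{HH}(P)$ on the nose, with no lifting step. To repair your argument you would need either to prove the relevant connecting map vanishes (false in general) or to exhibit such a chain-level lift; the latter is exactly the paper's route.
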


\begin{proof}
The first statement immediately follows from Theorem~\ref{thm: HKR and Ch}. The second statement follows from a concrete chain map
for the Hochschild type chain complexes; see \cite[\S~6.2]{CKK1} for some details.
\end{proof}

\section{Stacky Riemann-Roch}

\subsection{The categorical HRR}
The results in this subsection are taken from \cite{PV: HRR, Shk: HRR} with 
a weaker condition on a dg category $\cA$. Instead assuming that $\cA$ is saturated,
we assume that $\cA$ is a locally proper and smooth.

\begin{Def} 
Let $\cA$ be a {\em locally proper} dg category: i.e., for every $x, y\in \cA$,  the dimension $\sum _{i\in \GG} \dim H^i \Hom _{\cA} (x, y)$ of total cohomology 
of $\Hom _{\cA} (x, y)$ is finite.
Let  \[ \lan , \ran _{can} :  HH _* (\cA) \ti HH_* (\cA ^{op}) \to k  \]
 be the {\em canonical pairing} of $\cA$ defined by Shklyarov. It is a $k$-linear pairing. 
\end{Def}

\subsubsection{Transformations by bimodules}\label{sub: cat grr}
 \begin{Def} For a dg category $\cC$, 
we take  the projective model structure on the  category $\Mod (\cC)$ of right $\cC$-modules. The cofibrant objects are exactly
the summands of semi-free dg-modules.
A right $\cC$-module $N$ is called {\em perfect} if $N$ is a cofibrant object which is compact in the derived category $D(\cC )$ of right $\cC$-modules.
Let $\Mod _{dg}(\cC)$ be the dg category  of right $\cC$-modules. Let $\Perf (\cC)$ be the full subcategory of $\Mod _{dg}(\cC)$
consisting of all perfect $\cC$-modules.
We call a dg category $\cC$ is {\em smooth} if the diagonal bimodule $\Delta _{\cA}$ is a perfect bimodule.
\end{Def}

From now on let $\cA$ and  $\cB$ be locally perfect and smooth dg categories unless otherwise stated.
\begin{lemma} The total dimension of Hochschild homology $HH_*(\cA)$ of $\cA$ is finite. The dg category 
$\Perf (\cA \ot \cB)$ is locally prefect and smooth.
\end{lemma}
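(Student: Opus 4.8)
The plan is to prove the two assertions separately, reducing each to a standard fact about Hochschild homology of smooth and proper (locally so) dg categories.

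\textbf{Finiteness of $HH_*(\cA)$.} First I would recall that for a dg category $\cA$ the Hochschild homology can be computed as $HH_*(\cA) = \Delta_\cA \otimes^{\mathbb{L}}_{\cA \otimes \cA^{op}} \Delta_\cA$, i.e. as the derived tensor product of the diagonal bimodule with itself over the enveloping category. Since $\cA$ is smooth, $\Delta_\cA$ is a perfect $\cA\otimes\cA^{op}$-module, so it is a retract of a finite iterated extension of representable bimodules $h^x \otimes h_y$. Tensoring such a representable bimodule against $\Delta_\cA$ yields, up to quasi-isomorphism, the complex $\Hom_\cA(x,y)$. By local properness this complex has finite-dimensional total cohomology. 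Because perfectness gives us only finitely many such building blocks, assembled by finitely many cones and one retraction, the total cohomology of $HH_*(\cA)$ is finite-dimensional. The one point requiring a little care is that ``retract'' must be taken in the derived category, so one should phrase the argument as: the class of bimodules $M$ with $\dim_k H^*(M \otimes^{\mathbb{L}}_{\cA\otimes\cA^{op}} \Delta_\cA) < \infty$ is closed under shifts, cones, and retracts, contains the representables by local properness, hence contains all perfect bimodules, in particular $\Delta_\cA$.

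\textbf{$\Perf(\cA\otimes\cB)$ is locally perfect and smooth.} For smoothness I would use that the diagonal bimodule of $\Perf(\cC)$ is quasi-equivalent to (the image of) the diagonal bimodule of $\cC$ under the Morita-type identification $\Perf(\cC\otimes\cC^{op}) \simeq \Perf(\cC)\text{-}\Perf(\cC)$-bimodules, together with the fact that $\Delta_{\cA\otimes\cB} \simeq \Delta_\cA \boxtimes \Delta_\cB$; an exterior tensor product of perfect bimodules is perfect, so $\cA\otimes\cB$ is smooth, and smoothness passes to $\Perf$ of it since $\Perf(\cC)$ and $\cC$ have the same perfect bimodules. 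For local properness of $\Perf(\cA\otimes\cB)$: any two objects of $\Perf(\cA\otimes\cB)$ are perfect $(\cA\otimes\cB)$-modules, hence built by finitely many cones and retracts from representables $h^{(x,y)} = h^x \boxtimes h^y$ with $x\in\cA$, $y\in\cB$; the Hom complex between two representables is $\Hom_\cA(x,x')\otimes_k\Hom_\cB(y,y')$, which has finite-dimensional total cohomology by local properness of $\cA$ and of $\cB$ and the Künneth formula. Again phrasing it via ``the full subcategory of objects with finite-dimensional total Hom-cohomology against a fixed object is triangulated and closed under retracts, and contains the representables'' makes the induction clean.

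\textbf{Main obstacle.} The genuinely delicate point is the bookkeeping of the Morita/enveloping-category identifications: one must be careful that ``perfect bimodule over $\cA$'' and ``perfect bimodule over $\Perf(\cA)$'' agree, and that $\Delta_\cA$ corresponds to $\Delta_{\Perf(\cA)}$ under this comparison, so that smoothness is genuinely inherited by $\Perf(\cA\otimes\cB)$ rather than merely by $\cA\otimes\cB$. This is standard (it is the statement that smoothness is a Morita-invariant property), but it is where the proof should cite the relevant foundational references rather than re-derive things; everything else is the routine dévissage sketched above.
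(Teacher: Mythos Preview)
Your argument is correct and follows the same line as the paper. For the first claim the paper also computes $HH_*(\cA)$ as $\Delta_\cA \otimes^{\mathbb{L}}_{\cA^{op}\otimes\cA}\Delta_\cA$ and invokes ``tensor-hom adjunction and the conditions on $\cA$'' --- this is exactly your d\'evissage, since tensoring a representable bimodule against $\Delta_\cA$ is (by Yoneda/tensor-hom) the Hom-complex $\cA(x,y)$, whose cohomology is finite by local properness. For the second claim the paper simply cites Lemmas~2.13--2.15 of Lunts--Schn\"urer (\emph{Matrix factorizations and motivic measures}), noting that $k$ is a field; your sketch (exterior product of diagonals, K\"unneth for Hom-complexes between representables, Morita invariance of smoothness) is precisely the content of those lemmas, so you have supplied the proof the paper outsources. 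The one hypothesis you use implicitly and should flag explicitly is that $k$ is a field, which is what makes the exterior product of perfect bimodules perfect and the K\"unneth isomorphism hold on the nose.
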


\begin{proof}
The first claim amounts  $ \Delta _{\cA} \ot _{\cA ^{op} \ot \cA}^{\mathbb{L}} \Delta _{\cA} $ is a perfect dg $k$-module,
which follows from  tensor-hom adjunction and the conditions on $\cA$.
The second claim follows from  \cite[Lemma 2.13, 2.14, 2.15]{LS: matrix}, since $k$ is a field.
\end{proof}

For a right $\cA ^{op} \ot \cB$-module $M$ there is a dg functor $T_M : \Perf (\cA) \to \Mod_{dg} (\cB) $  sending $N$ to $N\ot _{\cA} M$. 
If $M$ is representable, then $T_M$ factors though $\Perf (\cB)$, since $\cA$ is locally proper.
Hence this is the case for every perfect $\cA ^{op} \ot \cB$-module $M$.

 Let $M \in \Perf (\cA ^{op} \ot \cB)$ and let 
$\Ch (M) = \sum_i \gamma _i \ot \gamma ^i $ under the K\"unneth isomorphism 
\[ HH_* (\Perf (\cA ^{op} \ot \cB) ) \cong HH_* (\Perf (\cA ^{op})) \ot HH_* (\Perf (\cB) ) . \]
Let  \[ HH  (T_M) : HH_* (\Perf (\cA)) \to HH_* (\Perf (\cB)) \] be the induced homomorphism from $T_M : \Perf (\cA) \to \Perf (\cB)$.

\begin{Prop}\label{prop: TM}
If  $\lan , \ran _{can}$ denotes the canonical pairing of $\Perf (\cA ^{op} \ot \cB)$, then for every $\sigma \in HH_* (\Perf (\cA))$
we have
\[ HH (T_M) (\sigma ) = \sum _i \lan \sigma, \gamma _i \ran _{can} \gamma ^i.\]
\end{Prop}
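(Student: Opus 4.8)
The plan is to reduce the statement to the known characterization of the canonical pairing via the Chern character of the diagonal bimodule (the property alluded to in \S~\ref{sub: char prop}). First I would recall that for a locally proper smooth dg category $\cC$ the canonical pairing $\lan\,,\,\ran_{can}$ on $HH_*(\cC)\ot HH_*(\cC^{op})$ is, up to the standard identifications, the pairing induced by the Mukai pairing, and that it can be computed as follows: for $\alpha\in HH_*(\cC)$ and $\beta\in HH_*(\cC^{op})$ one has $\lan\alpha,\beta\ran_{can}$ equal to the image of $\alpha\ot\beta$ under the composite $HH_*(\cC)\ot HH_*(\cC^{op})\cong HH_*(\cC\ot\cC^{op})\to k$, where the last map is contraction against $\Ch(\Delta_{\cC})$ followed by the trace on $HH_*(k)$. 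The point is that the functor $T_M$ is precisely the one classified by the bimodule $M$, and the induced map on Hochschild homology is given by the action of the class $\Ch(M)$ viewed as a correspondence.

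The key steps, in order, would be: (1) Identify $HH_*(T_M)$ with the map "convolve with $\Ch(M)$": since $T_M$ is the composite $\Perf(\cA)\xrightarrow{-\ot_\cA M}\Perf(\cB)$, functoriality of $HH_*$ together with the compatibility of the Chern character with tensor/convolution of bimodules (the categorical GRR / functoriality statement, e.g. from \cite{Shk: HRR} or \cite{PV: HRR}) gives $HH_*(T_M)(\sigma)$ as the pushforward along $\Perf(\cB)$ of the convolution $\sigma\star\Ch(M)$ over $\Perf(\cA)$. (2) Expand $\Ch(M)=\sum_i\gamma_i\ot\gamma^i$ under the K\"unneth isomorphism $HH_*(\Perf(\cA^{op}\ot\cB))\cong HH_*(\Perf(\cA^{op}))\ot HH_*(\Perf(\cB))$ (legitimate since $k$ is a field and all categories are locally proper smooth, so the K\"unneth map is an isomorphism). (3) Observe that convolving $\sigma\in HH_*(\Perf(\cA))$ with the factor $\gamma_i\in HH_*(\Perf(\cA^{op}))$ over $\cA$ is exactly the contraction $\lan\sigma,\gamma_i\ran$ computed against the diagonal of $\cA$, which by the definition/characterization of the canonical pairing of $\Perf(\cA^{op}\ot\cB)$ restricted appropriately equals $\lan\sigma,\gamma_i\ran_{can}$; the remaining factor $\gamma^i\in HH_*(\Perf(\cB))$ passes through untouched. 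Assembling, $HH(T_M)(\sigma)=\sum_i\lan\sigma,\gamma_i\ran_{can}\,\gamma^i$.

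I would make step (1) precise by working at the level of bimodules: $T_M$ corresponds to the $\cA$-$\cB$-bimodule $M$, and the general principle is that for bimodules $_\cA M_\cB$ the induced map $HH_*(\cA)\to HH_*(\cB)$ fits into the commutative square relating $\Ch$ of a module $N$ over $\cA$ to $\Ch$ of $N\ot_\cA M$ over $\cB$ — i.e. $\Ch_{HH}(T_M(N))=HH(T_M)(\Ch_{HH}(N))$ and moreover $HH(T_M)$ is cap/convolution with $\Ch(M)$. This is essentially Shklyarov's computation; I would cite it and then specialize. The bookkeeping to watch is the placement of $\cA$ versus $\cA^{op}$ and the sign/orientation conventions in the K\"unneth decomposition of $\Ch(M)$, so that the contraction lands on the $\gamma_i$ factor and not $\gamma^i$.

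The main obstacle I anticipate is not conceptual but organizational: making the identification in step (1) of $HH(T_M)$ with convolution against $\Ch(M)$ fully rigorous requires either invoking Shklyarov's results in the precise generality of \emph{locally} proper smooth (rather than saturated) dg categories, or re-deriving the relevant compatibility. Since the excerpt has already relaxed the hypotheses from saturated to locally proper and smooth and proved the finiteness lemmas needed ($HH_*(\cA)$ finite-dimensional, $\Perf(\cA\ot\cB)$ locally proper and smooth), I expect the cited arguments go through verbatim, so the real work is just checking that the K\"unneth isomorphism and the characterization of $\lan\,,\,\ran_{can}$ are available in this setting — which they are, by the lemma above and the discussion of the characterization property. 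Everything else is a formal manipulation with Chern characters and the canonical pairing.
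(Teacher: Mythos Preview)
The paper does not supply its own proof of this proposition: the opening of \S 6.1 says explicitly that the results of that subsection are taken from \cite{PV: HRR, Shk: HRR}, with the sole amendment that ``saturated'' is weakened to ``locally proper and smooth.'' Your outline is essentially the argument one finds in Shklyarov's paper --- identify $HH(T_M)$ with convolution against $\Ch(M)$, expand via K\"unneth, and recognize the contraction over $\cA$ as the canonical pairing --- so there is no independent proof in the paper to compare against, and your sketch matches the cited source.

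One clarification on step (3): the pairing $\lan\sigma,\gamma_i\ran_{can}$ that appears is the canonical pairing of $\cA$ (equivalently $\Perf(\cA)$), i.e.\ the map $HH_*(\cA)\ot HH_*(\cA^{op})\to k$, not a restriction of the canonical pairing of $\Perf(\cA^{op}\ot\cB)$ as you phrase it. The proposition's wording ``the canonical pairing of $\Perf(\cA^{op}\ot\cB)$'' is at best imprecise; your description of the contraction as pairing $\sigma$ with $\gamma_i$ against the diagonal of $\cA$ is the correct picture, and that \emph{is} the canonical pairing of $\cA$ by definition. Your caution about verifying that Shklyarov's argument goes through under the weaker hypotheses is well placed; the paper simply asserts it does, relying on the finiteness lemma immediately preceding the proposition.
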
 

\subsubsection{The characterization}\label{sub: char prop} 
 There are natural isomorphisms 
\[ HH_* (\Perf (\cA ^{op} \ot \cA )) \cong HH_* (\cA ^{op} \ot \cA) \cong HH_* (\cA ^{op}) \ot HH_*( \cA ) \] 
by the Morita invariance and the K\"unneth isomorphism.
Write   \[ \Ch _{HH}(\Delta _{\cA}) = \sum _i T^i \ot T_i \in HH_*(\cA ^{op}) \ot HH_* (\cA ) . \]
Then by Proposition~\ref{prop: TM} we obtain this.

\begin{Cor}\label{cor: char prop} The canonical pairing $\lan , \ran_{can}$   is characterized by two conditions:
(1) it is non-degenerate
(2) it satisfies the `diagonal decomposition' property: 
\[ \sum _i  \lan \gamma , T^i \ran  \lan T_i , \gamma ' \ran  = \lan \gamma, \gamma ' \ran  \]
for every $\gamma \in HH_*(\cA), \gamma ' \in HH_*(\cA ^{op})$.
\end{Cor}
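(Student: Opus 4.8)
The goal is to deduce the characterization of $\lan,\ran_{can}$ from Proposition~\ref{prop: TM} applied to the diagonal bimodule $M = \Delta_{\cA}$, viewed as a perfect $\cA^{op}\ot\cA$-module. The plan is to first observe that the dg functor $T_{\Delta_{\cA}} : \Perf(\cA) \to \Perf(\cA)$ is (naturally isomorphic to) the identity functor, since $N \ot_{\cA} \Delta_{\cA} \simeq N$ for every $N$. Therefore $HH(T_{\Delta_{\cA}})$ is the identity on $HH_*(\Perf(\cA)) \cong HH_*(\cA)$. Next I would unwind what Proposition~\ref{prop: TM} says in this case: writing $\Ch_{HH}(\Delta_{\cA}) = \sum_i T^i \ot T_i$ under the K\"unneth decomposition $HH_*(\Perf(\cA^{op}\ot\cA)) \cong HH_*(\cA^{op}) \ot HH_*(\cA)$, the proposition gives $\sigma = HH(T_{\Delta_{\cA}})(\sigma) = \sum_i \lan \sigma, T^i \ran_{can} T_i$ for every $\sigma \in HH_*(\cA)$, where $\lan,\ran_{can}$ here is the canonical pairing of $\Perf(\cA^{op}\ot\cA)$. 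I would then note that this canonical pairing, restricted appropriately, is compatible with the canonical pairing of $\cA$ via the K\"unneth isomorphism (this is the multiplicativity of the canonical pairing under tensor product, which is part of Shklyarov's formalism), so that $\lan \sigma, T^i\ran_{can}$ for $\sigma \in HH_*(\cA)$ is computed by the pairing $\lan,\ran$ of $\cA$ against the $HH_*(\cA^{op})$-component.

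From the identity $\sigma = \sum_i \lan \sigma, T^i \ran T_i$ the two claimed properties follow quickly. For non-degeneracy: if $\sigma \in HH_*(\cA)$ pairs to zero against everything in $HH_*(\cA^{op})$, then in particular $\lan\sigma, T^i\ran = 0$ for all $i$, whence $\sigma = \sum_i 0 \cdot T_i = 0$; the symmetric argument (using the analogous expansion coming from $T_{\Delta_{\cA^{op}}}$ or from the symmetry of the setup) handles degeneracy on the other side. For the diagonal decomposition property: given $\gamma \in HH_*(\cA)$ and $\gamma' \in HH_*(\cA^{op})$, apply the identity with $\sigma = \gamma$ to get $\gamma = \sum_i \lan\gamma, T^i\ran T_i$, then pair both sides with $\gamma'$ to obtain $\lan\gamma,\gamma'\ran = \sum_i \lan\gamma, T^i\ran \lan T_i, \gamma'\ran$, which is exactly condition (2). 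Finally, one should check uniqueness: any bilinear pairing satisfying (1) and (2) is forced to agree with $\lan,\ran_{can}$, since (2) says that the "matrix" of any such pairing, written in terms of the $T_i, T^i$, acts as a projector that must be the identity by non-degeneracy — concretely, if $\lan,\ran'$ is another such pairing, then $\lan\gamma,\gamma'\ran' = \sum_i \lan\gamma,T^i\ran'\lan T_i,\gamma'\ran'$ and one plays the two expansions against each other to conclude $\lan,\ran' = \lan,\ran_{can}$.

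\textbf{Main obstacle.} The step I expect to require the most care is the identification $HH(T_{\Delta_{\cA}}) = \mathrm{id}$ together with the compatibility of the canonical pairing of $\Perf(\cA^{op}\ot\cA)$ with that of $\cA$ under the K\"unneth and Morita isomorphisms — i.e., checking that the various identifications ($HH_*(\Perf(\cA^{op}\ot\cA)) \cong HH_*(\cA^{op})\ot HH_*(\cA)$, the naturality of $\Ch_{HH}$, and the multiplicativity $\lan,\ran_{can}^{\cA^{op}\ot\cB} = \lan,\ran_{can}^{\cA} \ot \lan,\ran_{can}^{\cB}$ up to the expected signs/reversals) are all mutually consistent. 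These are standard facts in the theory of Hochschild homology of dg categories (Shklyarov, and the references already cited), so in the write-up I would invoke them rather than reprove them; the actual logical content of the corollary beyond Proposition~\ref{prop: TM} is then the short two-line deduction above.
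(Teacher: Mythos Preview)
Your proposal is correct and follows exactly the paper's approach: the paper's entire proof is the single sentence ``Then by Proposition~\ref{prop: TM} we obtain this,'' and what you have written is precisely the unpacking of that sentence --- apply Proposition~\ref{prop: TM} with $\cB=\cA$ and $M=\Delta_{\cA}$, use that $T_{\Delta_{\cA}}$ induces the identity on Hochschild homology, and read off both non-degeneracy and the diagonal decomposition from $\sigma=\sum_i\lan\sigma,T^i\ran T_i$. The compatibility issue you flag as the main obstacle (that the pairing appearing in Proposition~\ref{prop: TM} is literally stated as that of $\Perf(\cA^{op}\ot\cB)$ while the corollary concerns the pairing of $\cA$) is real but, as you say, is handled by the standard multiplicativity of the canonical pairing under K\"unneth and Morita identifications from Shklyarov's work; the paper takes this for granted.
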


\subsubsection{The Cardy condition}
Consider objects $x, y \in \cA$.
Let $a$ and $b$ be closed endomorphisms of $x$ and $y$, respectively.  
Let \[ L_b \circ R_a : \Hom _{\cA} (x, y) \to \Hom _{\cA} (x, y) , \ (-1)^{|a||c|} \mapsto b \circ c \circ a . \]

\begin{Thm}\label{thm: cat HRR} 
We have
\[ \tr (L_b \circ R_a) = \lan [b], [a] \ran _{can}. \]
For the identities $a=1_x$, $b=1_y$, it is specialized to 
\[ \chi (x, y) = \lan \Ch_{HH} (y), \Ch_{HH} (x) \ran _{can} . \]  
\end{Thm}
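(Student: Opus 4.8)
The plan is to reduce the statement to Proposition~\ref{prop: TM}, using the Morita invariance of Hochschild homology and of the canonical pairing to work inside $\Perf(\cA)$ and $\Perf(\cA^{op})$. Under the Yoneda embedding $\cA\hookrightarrow\Perf(\cA)$ an object $z$ corresponds to the representable module $h^z=\Hom_{\cA}(-,z)$ and a closed endomorphism $f$ of $z$ corresponds to the endomorphism $h^f$ of $h^z$ given by postcomposition with $f$; moreover this embedding carries the Hochschild class $[f]\in HH_*(\cA)$ to the Hochschild class of the pair $\Ch_{HH}(h^z,h^f)$, which for $f=1_z$ is $\Ch_{HH}(h^z)=\Ch_{HH}(z)$. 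Viewing $x$ instead as an object of $\cA^{op}$, the associated representable $\cA^{op}$-module is the corepresentable $\cA$-module $M_x:=\Hom_{\cA}(x,-)$; it is perfect, and the functor $T_{M_x}$ of \S~\ref{sub: cat grr} lands in $\Perf(k)$ because $\cA$ is locally proper. The closed endomorphism $a$ of $x$ equips $M_x$ with the endomorphism $\alpha_a$ given by precomposition with $a$, and $\Ch_{HH}(M_x,\alpha_a)\in HH_*(\cA^{op})$ is the class $[a]$ read in $\cA^{op}$.

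Next I would apply Proposition~\ref{prop: TM} with $\cB=k$ and $M=M_x$. The coYoneda (density) identification gives a quasi-isomorphism $T_{M_x}(h^y)=h^y\ot_{\cA}M_x\cong\Hom_{\cA}(x,y)$ of perfect $k$-complexes, and under it $T_{M_x}$ sends the postcomposition endomorphism $h^b$ of $h^y$ to the endomorphism $L_b$, while the precomposition endomorphism $\alpha_a$ of $M_x$ induces, through the natural transformation $1_{h^y}\ot\alpha_a$, the endomorphism $R_a$; their composite is exactly $L_b\circ R_a$, the Koszul sign $(-1)^{|a||c|}$ in the definition of the latter being produced by the braiding of the tensor factors. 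Since $HH_*(k)=k$ and the Hochschild class of a perfect $k$-complex with a closed endomorphism is its supertrace, naturality of $\Ch_{HH}$ gives
\[ \tr(L_b\circ R_a)=\Ch_{HH}\bigl(\Hom_{\cA}(x,y),\,L_b\circ R_a\bigr), \]
which for $a=1_x$, $b=1_y$ equals $\chi(x,y)$.

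It then remains to identify the right-hand side with the canonical pairing. For $\cB=k$ the K\"unneth factor is trivial, so Proposition~\ref{prop: TM} reads $HH(T_{M_x})(\sigma)=\lan\sigma,\Ch_{HH}(M_x)\ran_{can}$ in $HH_*(k)=k$; taking $\sigma=\Ch_{HH}(h^y)=\Ch_{HH}(y)$ and using $\Ch_{HH}(M_x)=\Ch_{HH}(x)$ yields the unadorned identity $\chi(x,y)=\lan\Ch_{HH}(y),\Ch_{HH}(x)\ran_{can}$. The refined identity with arbitrary closed $a,b$ needs the corresponding endomorphism-twisted form of Proposition~\ref{prop: TM}: that $HH(T_{M_x})$, twisted by the natural transformation induced from $\alpha_a$ and evaluated on $\Ch_{HH}(h^y,h^b)$, equals $\lan\Ch_{HH}(h^y,h^b),\Ch_{HH}(M_x,\alpha_a)\ran_{can}=\lan[b],[a]\ran_{can}$. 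Making this precise is the one step that is not purely formal, and I expect it to be the main obstacle: one must check at the chain level that inserting the closed endomorphisms $h^b$ and $\alpha_a$ is compatible with the K\"unneth and Morita quasi-isomorphisms and with the boundary-bulk and trace maps. This bookkeeping is exactly what is carried out in the chain-level Cardy-type computations of \cite{PV: HRR, Shk: HRR}, whose arguments use only local properness and smoothness of $\cA$ rather than the stronger saturatedness hypothesis, and therefore apply verbatim here.
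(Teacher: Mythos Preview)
The paper does not actually supply a proof of Theorem~\ref{thm: cat HRR}: it is quoted as a known result from \cite{PV: HRR, Shk: HRR}, with the remark that the saturatedness hypothesis in those references can be relaxed to local properness plus smoothness. So there is no ``paper's own proof'' to compare against beyond that citation.

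Your outline is essentially the argument of Shklyarov \cite{Shk: HRR}. The reduction via Proposition~\ref{prop: TM} with $\cB=k$ and $M=M_x=\Hom_{\cA}(x,-)$ is clean and gives the specialized identity $\chi(x,y)=\lan\Ch_{HH}(y),\Ch_{HH}(x)\ran_{can}$ directly, since $M_x$ is representable (hence perfect) and $T_{M_x}(h^y)\cong\Hom_{\cA}(x,y)$ by co-Yoneda. You are right that the general formula with closed endomorphisms $a,b$ requires the endomorphism-twisted version of Proposition~\ref{prop: TM}, and that this is precisely the content of the chain-level trace computation in \cite{Shk: HRR}; your deferral to that reference is appropriate and matches what the paper itself does. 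One small comment: your claim that those arguments ``use only local properness and smoothness'' deserves a sentence of justification, since Shklyarov's setup is a single proper dg algebra. The point is that the Cardy identity is a statement about finitely many objects at a time, so one may replace $\cA$ by the full dg subcategory on $x,y$ (which is proper in Shklyarov's sense), and smoothness is only invoked for the nondegeneracy of the pairing, which is not needed for the Cardy formula itself.
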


\subsection{On $\MFdg (\cX, w)$}\label{sub: sm}

From this point in the text all DM stacks considered  are assumed to be quotient stacks that satisfy the resolution property. 
 
\begin{Def} An {\em LG model} is a pair $(\cX, w)$ of a smooth separated DM stack $\cX$ over $k$
and a regular function $w$ on $\cX$. We further assume that $\cX$ is a quotient stack that satisfies the resolution property.
We assume that the critical value is over $0$. If $\GG = \ZZ$, then $w=0$. 
If $\GG = \ZZ/2$, then we furthermore assume that $w: \cX \to \mathbb{A}^1$ is flat.
The pair $(\cX, w)$ will be called a {\em proper LG model} if the critical locus of $w$ is proper over $k$.
\end{Def}

Consider two proper LG models $(\cX, w)$ and  $(\cY, v)$.
We want to show that $\MFdg (\cX, w)$ is locally proper, smooth; and the following $(\dagger)$ and $(\star)$ hold:

$(\dagger)$ There is a natural dg functor  
\begin{align*} \MFdg (\cX \ti \cY, w \boxplus v) &  \to \Perf (\MFdg(\cX, w) \ot \MFdg (\cY, v)) \quad \text{ defined by }    \\
                                  E & \mapsto \Psi (E) : x \ot y \mapsto \Hom _{\MFdg (\cX \ti \cY, w \boxplus v) } (x\boxtimes y, E) .  \end{align*}
Here $w \boxplus v$ denotes $w \ot 1 + 1 \ot v$.

$(\star)$ The triangulated category $[\MFdg (\cX \ti \cY, w \boxplus v)]$ is the smallest full triangulated subcategory containing all exterior products closed under finite coproducts and
summands. Here an object of $[\MFdg (\cX \ti \cY, w \boxplus v)]$ is called an exterior product if it is isomorphic to $x\boxtimes y$ for some
$x \in \MFdg(\cX, w) , y \in \MFdg(\cY, v) $.

\begin{lemma}
\begin{enumerate}

\item  $(\star)$ implies $(\dagger)$.

\item $(\dagger)$ implies that the smoothness of $\MFdg (\cX, w)$, 

\end{enumerate}
\end{lemma}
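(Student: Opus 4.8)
For part (1), the plan is to use the universal property of $\Perf$ together with the categorical Morita-type statement that a dg functor into $\Mod_{dg}$ of a product category is determined by its values on a generating set. Concretely, the assignment $E \mapsto \Psi(E)$ sends an exterior product $x \boxtimes y$ to the representable bimodule $\Hom_{\MFdg(\cX,w)}(-,x) \ot \Hom_{\MFdg(\cY,v)}(-,y)$, which is perfect over $\MFdg(\cX,w) \ot \MFdg(\cY,v)$; and $\Psi$ is compatible with cones, shifts, finite coproducts, and retracts, because each of these operations is computed termwise in the Hom-complexes. Since $(\star)$ says that $[\MFdg(\cX \ti \cY, w\boxplus v)]$ is generated from exterior products under exactly these operations, every object $E$ lands in $\Perf(\MFdg(\cX,w) \ot \MFdg(\cY,v))$, so the dg functor $\Psi$ of $(\dagger)$ is well-defined. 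The one thing to check carefully is that the first page of the relevant spectral sequence / the $\Com$-level estimate shows $\Hom_{\MFdg(\cX \ti \cY, w\boxplus v)}(x\boxtimes y, E)$ really computes the derived Hom and hence a perfect module — but this is exactly the content of the \v{C}ech-model quasi-isomorphisms in \S\ref{sub: Cech model} and Remark~\ref{Rmk: truncation}, so it is routine given what has been set up.

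For part (2), the plan is to apply $(\dagger)$ to the diagonal situation $\cY = \cX$, $v = -w$, so that the graph/diagonal matrix factorization $\Delta \in \MFdg(\cX \ti \cX, w \boxplus (-w)) = \MFdg(\cX \ti \cX, w \ominus w)$ (the matrix factorization associated to the diagonal immersion, cf.\ the graph construction in \S\ref{sub: cat grr}) maps under $\Psi$ to a perfect bimodule over $\MFdg(\cX, w) \ot \MFdg(\cX, w)^{op} \simeq \MFdg(\cX,w)^{op} \ot \MFdg(\cX,-w)$. The key point is the identification: $\Psi(\Delta)$ as a bimodule is (quasi-isomorphic to) the diagonal bimodule $\Delta_{\MFdg(\cX,w)}$, since $\Hom_{\MFdg(\cX \ti \cX, w \ominus w)}(x \boxtimes y^{\vee}, \Delta) \simeq \Hom_{\MFdg(\cX,w)}(x, y)$ by adjunction / the projection formula along the diagonal (this is the standard "kernel of the identity functor is the diagonal" statement in the matrix-factorization setting). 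Once $\Delta_{\MFdg(\cX,w)}$ is exhibited as a perfect bimodule, smoothness of $\MFdg(\cX,w)$ follows by definition.

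\textbf{Main obstacle.} The step I expect to require the most care is the identification $\Psi(\Delta) \simeq \Delta_{\MFdg(\cX,w)}$ as bimodules, and more precisely checking that the adjunction computing $\Hom(x \boxtimes y^{\vee}, \Delta)$ genuinely takes place in the coderived/\v{C}ech-model category and not merely up to some acyclic error — one must invoke that $\cX$ satisfies the resolution property and is a quotient stack (so that the diagonal is a closed immersion with the expected properties and the Koszul-type resolution of the diagonal matrix factorization behaves well), and that the \v{C}ech model is functorial enough for $x \boxtimes y$ to be computed on the product cover $\fU \ti \fU$. Everything else — stability of $\Perf$ under cones and retracts, perfectness of representable bimodules, K\"unneth compatibility — is formal once $(\star)$ is granted.
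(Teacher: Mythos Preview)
Your proposal is correct and follows exactly the paper's approach: the paper's proof of (1) is literally the phrase ``(1) is clear'', and for (2) it invokes that $\Psi(\Delta_* \cO_{\cX})$ is quasi-isomorphic to the diagonal bimodule, with the identification you flag as the main obstacle deferred to Lemma~\ref{lem: geom diag} in \S\ref{sub: geom real}. One small bookkeeping point: in the paper's conventions one applies $(\dagger)$ to the pair $((\cX,-w),(\cX,w))$ and then uses the duality $D:\cA^{op}\to\MFdg(\cX,-w)$ of \S\ref{sub: geom real} to identify the target with $\Perf(\cA^{op}\otimes\cA)$, so your line $\MFdg(\cX,w)\otimes\MFdg(\cX,w)^{op}\simeq\MFdg(\cX,w)^{op}\otimes\MFdg(\cX,-w)$ is slightly garbled but the intent is right.
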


\begin{proof}
(1) is clear. Let $\Delta : \cX \to \cX ^2$ be the diagonal morphism. Then (2) follows from that $\Psi (\Delta \cO _{\cX})$ is quasi-isomorphic to
the diagonal bimodule.
\end{proof}

Since $\MFdg (\cX, w)$ is clearly locally proper, it is enough to show $(\star)$. We check this when 
$\cX$ is a  stack quotient $[X/G]$ of a smooth variety by an action of  an affine algebraic group $G$.

\subsubsection{The case when $\GG = \ZZ$} 

Then $w=0$. Note that $(\star)$ holds true by Theorem 2.29 and Corollary 4.21 of \cite{BFK: Kernels}.

\subsubsection{The case when $\GG=\ZZ/2$}
Then $w$ is a $G$-invariant function on $X$, not identically zero on any component of $X$.
Note that $(\star)$ holds by Theorem 2.29 and Lemma 4.23 of \cite{BFK: Kernels}.

\subsection{A geometric realization of the diagonal module}\label{sub: geom real} 
Consider two proper LG models $(\cX, w), (\cY, v)$. Suppose that $\cX$, $\cY$ are stack quotients of smooth varieties by actions of affine algebraic groups.
Let $f: \cX \to \cY$ be a proper morphism with $f^* v = w$.  We call $f : (\cX, w) \to (\cY, v)$ an {\em proper LG morphism}.
Choose an affine \'etale cover $\fU \to \cX$ and $\fU ' \to \cY$. Denote  \[ \cA := \MFdg (\cX, w) , \ \cB :=  \MFdg (\cY , v )  . \] 
They are locally proper and smooth as seen in \S~\ref{sub: sm}.

Let $-w\boxplus v := - w\ot 1 + 1\ot v$ and let $ \MFdg (\cX \ti \cY , -w\boxplus v )$ be the \v{C}ech dg model of the matrix 
factorizations for $(\cX \ti \cY , -w\boxplus v )$ with respect to the affine cover $\fU \ti \fU ' \to \cX \ti \cY$.
Then by  $(\dagger)$ 
we have a natural dg functor \[ \Psi : \MFdg (\cX \ti \cY , -w\boxplus v ) \to \Perf (\cA ^{op} \ot \cB).  \]
Let $D: \cA ^{op} \to \MFdg (\cX, -w)$ be the duality functor. Then we have
a commuting diagram of isomorphisms 
\begin{equation}\label{diag: iso}  \xymatrix{ HH_* ( \MFdg (\cX \ti \cY , -w\boxplus v )) \ar[d]_{\text{hkr}} \ar[r]^(.55){HH (\Psi)} & HH_*  ( \Perf (\cA ^{op} \ot \cB)) 
\ar[d]^{\text{hkr}\circ HH(D) \ot \mathrm{id} \circ \text{k\"unneth}} \\
              \HH^{-*} (\Omega ^{\bullet}_{I\cX \ti I\cY}, d(w\boxplus - v))  \ar[r]_(.45){\text{k\"unneth}} & \HH^{-*} (\Omega ^{\bullet}_{I\cX},  dw) \ot \HH^{-*} (\Omega ^{\bullet}_{I\cY} , -dv) , } \end{equation}
where hkr and k\"unneth are the HKR type isomorphisms in \S~\ref{sub: global hkr} and the K\"unneth isomorphisms, respectively.

Consider  a  matrix factorization $K$ for  $(\cX \ti \cY, -w\boxplus v)$.
For example, we have a coherent factorization
\[ \cO _{\Gamma _f} := (\Gamma _f )_* \cO _{\cX}  \text{ for } (\cX \ti \cY, -w\boxplus v) . \] 
Since $\cX \ti \cY$ satisfies the resolution property by \cite[Theorem 2.29]{BFK: Kernels}, 
$\cO _{\Gamma _f}$ is quasi-isomorphic to  a matrix factorization.

For all $x \in \cA, y\in \cB$ there is  a natural quasi-isomorphism 
\[  \RR\Hom ( y,   q_* (  K  \ot p^*x )) \sim_{qiso}  \Hom _{\MFdg (\cX \ti \cY , -w\boxplus v)} 
( x^{\vee} \boxtimes y,     K ) \] functorial under the morphisms in the categories $\cB$ and $\cA$.
This shows the following, which will be used later.

\begin{lemma}\label{lem: geom diag}  For easy notation, write $T_{K}$ for $T_{\Psi (K)}$. Then:
\begin{enumerate} 
\item The transformation $T_{K}: \Perf (\cA) \to \Perf (\cB)$ is a dg enhancement of the Fourier-Mukai transform $[\cA] \to [\cB]$ attached to the kernel $K$.
In particular, $T_{\cO_{\Gamma _f}}$ represents $\RR f_*: [\cA] \to [\cB]$.
\item 
The bimodule $\Psi (\cO _{\Gamma _{\mathrm{id}}})$ and the diagonal bimodule $\Delta _{\cA}$ are quasi-isomorphic.
\end{enumerate}
\end{lemma}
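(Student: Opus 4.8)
The plan is to unwind the definitions of the two functors being compared and produce an explicit natural quasi-isomorphism between the associated bimodules. Recall that $\Psi(\cO_{\Gamma_{\mathrm{id}}})$ is, by the formula in $(\dagger)$, the bimodule that sends $x \ot y \mapsto \Hom_{\MFdg(\cX\ti\cX,\, -w\boxplus w)}(x\boxtimes y,\ \cO_{\Gamma_{\mathrm{id}}})$, where $\Gamma_{\mathrm{id}}\colon \cX \to \cX\ti\cX$ is the diagonal. The diagonal bimodule $\Delta_{\cA}$ sends $x\ot y \mapsto \Hom_{\cA}(x,y) = \Hom_{\MFdg(\cX,w)}(x,y)$. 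So what must be produced is, for each pair $x,y$, a quasi-isomorphism
\[
\Hom_{\MFdg(\cX\ti\cX,\,-w\boxplus w)}(x\boxtimes y,\ (\Gamma_{\mathrm{id}})_*\cO_{\cX}) \ \sim_{qiso}\ \Hom_{\MFdg(\cX,w)}(x,y),
\]
functorial (as a dg bimodule morphism) in both variables.

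First I would invoke the general adjunction quasi-isomorphism established just above in the excerpt for an arbitrary kernel $K$: specializing the displayed functorial quasi-isomorphism
\[
\RR\Hom(y,\ q_*(K\ot p^*x)) \ \sim_{qiso}\ \Hom_{\MFdg(\cX\ti\cY,\,-w\boxplus v)}(x^{\vee}\boxtimes y,\ K)
\]
to the case $\cY=\cX$, $v=w$, $f=\mathrm{id}$, and $K=\cO_{\Gamma_{\mathrm{id}}}$, and replacing $x$ by $x^{\vee}$ (so that $x^{\vee\vee}\cong x$ up to the canonical quasi-isomorphism, since $x$ is a matrix factorization hence dualizable), the right-hand side becomes exactly the Hom-space computing $\Psi(\cO_{\Gamma_{\mathrm{id}}})(x\ot y)$. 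Then I would compute the left-hand side: by Lemma~\ref{lem: geom diag}(1), $T_{\cO_{\Gamma_{\mathrm{id}}}}$ is a dg enhancement of the Fourier-Mukai transform with kernel $\cO_{\Gamma_{\mathrm{id}}} = (\Gamma_{\mathrm{id}})_*\cO_{\cX}$, which on $[\cA]$ is $\RR(\mathrm{id})_* = \mathrm{id}$; concretely $q_*(\cO_{\Gamma_{\mathrm{id}}}\ot p^*x)\sim_{qiso} x$ (base change along the diagonal together with the projection formula). Hence $\RR\Hom(y, q_*(\cO_{\Gamma_{\mathrm{id}}}\ot p^*x)) \sim_{qiso} \RR\Hom(y,x)$. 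To match the diagonal bimodule I then need this to be $\Hom_{\cA}(x,y)$ rather than $\RR\Hom(y,x)$; here I would use that $\Delta_{\cA}(x\ot y) = \Hom_{\cA}(x,y)$ already computes $\RR\Hom$ on the level of the $\MFdg$ model (the \v Cech/truncation enhancement of \S\ref{sub: Cech model} computes derived Homs, by Remark~\ref{Rmk: truncation}), so the two agree up to the standard quasi-isomorphism and there is no genuine asymmetry.

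The main point requiring care — and the step I expect to be the real obstacle — is \emph{functoriality and naturality of the identification at the chain level}: it is not enough to match the two bimodules objectwise in the derived category; one needs the family of quasi-isomorphisms to assemble into a morphism of $\cA$-$\cA$-bimodules (equivalently, a morphism in $\Mod_{dg}(\cA^{op}\ot\cA)$) that is a quasi-isomorphism, since "$\Psi(\cO_{\Gamma_{\mathrm{id}}})$ and $\Delta_{\cA}$ are quasi-isomorphic" is a statement in the derived category of bimodules. The functoriality clause in the cited adjunction ("functorial under the morphisms in the categories $\cB$ and $\cA$") is exactly what supplies this, but one must check that the base-change/projection-formula quasi-isomorphism $q_*(\cO_{\Gamma_{\mathrm{id}}}\ot p^*x)\sim_{qiso} x$ can be chosen compatibly in $x$ and that the \v Cech model is flexible enough (replacing $\fU\ti\fU$ by a common refinement as in Remark~\ref{rmk: ind cover}) for all the maps to be honest chain maps rather than only zig-zags. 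Once that bookkeeping is in place, chaining the quasi-isomorphisms $\Psi(\cO_{\Gamma_{\mathrm{id}}})(x\ot y) \sim \RR\Hom(y,x) \sim \Hom_{\cA}(x,y) = \Delta_{\cA}(x\ot y)$ and checking that each is a bimodule map completes the proof; this is exactly what part (2) of the first part of the lemma (the statement $\Psi(\Delta\cO_{\cX})$ is quasi-isomorphic to the diagonal bimodule, used to deduce smoothness) anticipates, so the argument should also be consistent with that earlier use.
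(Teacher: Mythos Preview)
Your approach is correct and is essentially the same as the paper's: the paper does not give a separate proof of the lemma at all, but simply states that it follows from the displayed functorial quasi-isomorphism $\RR\Hom(y, q_*(K\ot p^*x)) \sim_{qiso} \Hom_{\MFdg(\cX\ti\cY,-w\boxplus v)}(x^{\vee}\boxtimes y, K)$ (``This shows the following''). Your proposal is precisely an unwinding of that one line, specializing to $K=\cO_{\Gamma_f}$ and then to $f=\mathrm{id}$, and your discussion of the naturality/bimodule-compatibility bookkeeping is the content hidden behind the paper's phrase ``functorial under the morphisms in the categories $\cB$ and $\cA$''.
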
 

The second statement in the above lemma is also in Lemma 5.24 of \cite{BFK: Kernels}.

\subsection{An explicit realization of the canonical pairing}\label{sub: Pf can}

\begin{Thm}\label{thm: can pair}
Let $(\cX, w)$ be a proper LG model.
Assume that $\cX$ is a smooth quotient DM stack which satisfies the resolution property.
Then the canonical pairing coincides with the paring defined by
\begin{equation}\label{eqn: can pair form} \int _{I\cX} (-1)^{\binom{\dim _{I\cX}+1}{2}}  \cdot \wedge \cdot \wedge \frac{\td (T_{I\cX})}{\tch (\lambda _{-1} ( N^{\vee}_{I\cX / \cX}))} ,\end{equation}
 where $\dim _{I\cX}$ is the locally constant dimension function of $I\cX$.
\end{Thm}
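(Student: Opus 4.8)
The strategy is to verify the two characterizing conditions of Corollary~\ref{cor: char prop} for the pairing \eqref{eqn: can pair form}, namely non-degeneracy and the diagonal decomposition property. Non-degeneracy is the easy part: the pairing $\int_{I\cX}(-1)^{\binom{\dim_{I\cX}+1}{2}}\,\cdot\wedge\cdot\wedge\,\widetilde{\td}(T_{I\cX})$ with $\widetilde{\td}(T_{I\cX}) = \td(T_{I\cX})/\tch(\lambda_{-1}(N^{\vee}_{I\cX/\cX}))$ is, up to a sign and multiplication by an \emph{invertible} cohomology class (the leading term of $\widetilde{\td}$ is $1$, and $\lambda_{-1}(N^{\vee})$ has leading term the top Chern-like unit on each component after restriction), essentially Serre duality pairing on the smooth proper-over-$k$-critical-locus stack $I\cX$ between twisted Hodge cohomologies $\HH^*(I\cX,(\Omega^\bullet_{I\cX},-dw))$ and $\HH^*(I\cX,(\Omega^\bullet_{I\cX},dw))$. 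So I would first record that Serre-type duality for the twisted de Rham complex on $I\cX$ (using properness of the critical locus, which makes the relevant cohomology finite-dimensional) gives a perfect pairing, and multiplication by an invertible class preserves perfectness.

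The substance is the diagonal decomposition property, and here I would use the geometric realization of the diagonal bimodule from Lemma~\ref{lem: geom diag}(2): $\Delta_{\cA} \simeq \Psi(\cO_{\Gamma_{\mathrm{id}}})$ where $\cO_{\Gamma_{\mathrm{id}}} = \Delta_*\cO_{\cX}$ viewed as a matrix factorization for $(\cX\ti\cX, -w\boxplus w)$. Via the commuting square \eqref{diag: iso}, computing $\Ch_{HH}(\Delta_{\cA}) = \sum_i T^i\ot T_i$ amounts to computing $\ch_{HH}(\cO_{\Gamma_{\mathrm{id}}})$ under the HKR isomorphism on $I(\cX\ti\cX) = I\cX\ti I\cX$, then applying the Künneth and duality identifications. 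By Theorem~\ref{thm: HKR and Ch}, this Chern character is $\tr(\can_{\cO_{\Gamma}|_{I(\cX\ti\cX)}}\exp(\hat{\at}))$ restricted to the inertia of the diagonal. The key computation is then to push the class of the structure sheaf of a diagonal-type local immersion $I\Delta: I\cX \hookrightarrow I\cX\ti I\cX$ through HKR, which is exactly where the Todd and $\lambda_{-1}(N^\vee)$ corrections enter: this is the matrix-factorization / inertia-stack analogue of the classical computation that $\ch(\cO_\Delta) = \Delta_*(\td(T_X)^{-1})$ (or its GRR-for-a-regular-immersion form). I would carry this out by the deformation to the normal cone for the \emph{local immersion} $I\Delta$ — which is available by \cite{Kresch: cycle, Vistoli} since $\rho_\cX$ and hence the relevant diagonal are unramified — reducing to the case where $I\cX$ sits in its normal bundle inside $I\cX\ti I\cX$; on the normal bundle the pushforward of $1$ under HKR is computed by a Koszul resolution, producing precisely the factor $1/\tch(\lambda_{-1}(N^\vee))$ together with the relative Todd class $\td(T_{I\cX})$, plus the sign $(-1)^{\binom{\dim_{I\cX}+1}{2}}$ coming from the degree shift conventions in the twisted Hodge complex and the comparison of the $\boxplus$-sign on the two factors.

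Granting that computation, the diagonal decomposition identity $\sum_i\lan\gamma,T^i\ran\lan T_i,\gamma'\ran = \lan\gamma,\gamma'\ran$ becomes the statement that the ``reproducing kernel'' of the pairing \eqref{eqn: can pair form} is represented by the HKR-image of $\Delta_*\cO_{\cX}$, which is a bookkeeping check once one knows both sides are expressed via $\int_{I\cX}$ against the class $\widetilde{\td}(T_{I\cX})$: one feeds the explicit diagonal class into the left-hand composite, uses the projection formula along $I\Delta$ (equivalently, that $\int_{I\cX\ti I\cX}$ factors through $I\Delta_*$) and the multiplicativity of $\td$ and $\lambda_{-1}$ under the splitting $\rho_\cX^*T_\cX \cong T_{I\cX}\oplus N_{\rho_\cX}$ from \S\ref{subsub: ses rho TX}, and collapses everything to a single integral over $I\cX$. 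I expect the main obstacle to be the normal-cone computation of the diagonal Chern character through the stacky HKR map with twisting, in particular tracking the $\can$-twist and the signs consistently between the two copies of $\cX$ (the $-w$ on one factor versus $+w$ on the other) — once the sign $(-1)^{\binom{\dim_{I\cX}+1}{2}}$ and the Todd/$\lambda_{-1}$ factors are pinned down on the model case of a regular local immersion into a vector bundle, the rest is formal. Finally, uniqueness in Corollary~\ref{cor: char prop} forces the pairing \eqref{eqn: can pair form} to equal $\lan\,,\,\ran_{can}$.
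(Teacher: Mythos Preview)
Your plan is essentially the paper's own proof: verify Corollary~\ref{cor: char prop} via Serre duality for non-degeneracy and, for the diagonal decomposition, realize $\Delta_{\cA}$ geometrically by $\Delta_*\cO_{\cX}$ (Lemma~\ref{lem: geom diag}), compute its HKR Chern character by a deformation-to-the-normal-cone argument reducing to a Koszul computation on the special fiber, and invoke the splitting $\rho_{\cX}^*T_{\cX}\cong T_{I\cX}\oplus N_{\rho_{\cX}}$ from \S\ref{subsub: ses rho TX} to extract the $\td(T_{I\cX})$ and $\tch(\lambda_{-1}N^{\vee}_{I\cX/\cX})$ factors and the sign.

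One correction, though: you should deform the diagonal $\Delta:\cX\to\cX^2$ (so the special fiber is $T_{\cX}$, with inertia $IT_{\cX}\cong T_{I\cX}$ by Lemma~\ref{lem: IT TI}), \emph{not} the local immersion $I\Delta:I\cX\hookrightarrow I\cX\times I\cX$. The object whose $\ch_{HH}$ you must compute is $\Delta_*\cO_{\cX}$ on $\cX^2$; under the formula of Theorem~\ref{thm: HKR and Ch} this becomes $\tch$ of $\rho_{T_{\cX}}^*\Kos(\sigma)$, where $\sigma$ is the diagonal section of $\pi_{\cX}^*T_{\cX}$ --- a Koszul complex of rank $\dim\cX$, not $\dim I\cX$. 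The $\lambda_{-1}(N^{\vee}_{I\cX/\cX})$ factor arises precisely because the canonical automorphism acts nontrivially on the $N_{\rho_{\cX}}$-summand of this bigger Koszul complex (via the short exact sequence \eqref{eqn: ses 2}); if you only deformed $I\Delta$ you would see $\Kos(I\sigma)$ alone and never produce that factor. With this adjustment your outline coincides with the paper's argument.
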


\begin{proof}
We prove the characterization Corollary \ref{cor: char prop} for the pair \eqref{eqn: can pair form} .
The nondegeneracy follows from Serre duality \cite{Ni} as argued in \cite[\S~4.1]{FK}.
By Lemma~\ref{lem: geom diag} the `diagonal decomposition' is
 \begin{equation}\label{eqn: diag dec pf}
   \sum _i  \int _{I\cX} \gamma \cdot  t^i \cdot  \widetilde{\td} (T_{I\cX})  \int _{I\cX} t^i \cdot \gamma ' \cdot  \widetilde{\td} (T_{I\cX})           
 =   \int _{I\cX} (-1)^{\binom{\dim _{I\cX}+1}{2}}  \gamma '' \cdot  \widetilde{\td} (T_{I\cX}),
 \end{equation}
 where  \begin{align} 
  \sum_i t^i \ot t_i  & = \ch _{HH} (\Delta _* \cO _{\cX}) \in \HH ^* (I\cX, (\Omega ^{\bullet}_{I\cX}, -dw|_{I\cX})) \ot \HH^* (I\cX, (\Omega ^{\bullet}_{I\cX}, dw|_{I\cX}))  \nonumber \\
    \gamma &  \in \HH ^* (I\cX, (\Omega ^{\bullet}_{I\cX}, dw|_{I\cX})), \gamma ' \in \HH^* (I\cX, (\Omega ^{\bullet}_{I\cX}, -dw|_{I\cX})) \nonumber \\
 &  \label{eqn: alpha} \widetilde{\td} (T_{I\cX})  := \frac{\td (T_{I\cX})}{\tch (\lambda _{-1} ( N_{I\cX/\cX}^{\vee}))}. \end{align}
To show \eqref{eqn: diag dec pf}  we use the deformation to the normal cone.

Over $\PP^1$ there is a deformation stack $\cM^{\circ}$ to the normal cone $N_{\cX / \cX ^2}$: the general fiber is $\cX ^2$ and the special fiber, say over $\infty$, is 
the vector bundle stack $N_{\cX / \cX ^2} \cong T_{\cX}$. It comes with a natural morphism $h: \cM ^{\circ} \to  \cX ^2$,
 a flat morphism $pr: \cM ^{\circ} \to \PP ^1$, and a morphism
 $\widetilde{\Delta} : \cX  \ti \PP ^1 \to \cM^{\circ}$ such that $(h, pr) \circ \widetilde{\Delta}  = \Delta \ti \mathrm{id}_{\PP ^1}$.
Consider the fiber square diagram
\[ \xymatrix{ \cX \ti 0 \ar[r] \ar[d]_{\Delta} & \cX \ti \PP ^1 \ar[d]_{\widetilde{\Delta}} & \ar[l] \cX \ti \infty \ar[d]_{\delta} \\
        \cX ^2 \ar[r]       & \cM ^\circ        & \ar[l] T_\cX \\
        I\cX ^2 \ar[r] \ar[u]^{\rho _{\cX ^2}} &  \ar[u]^{\rho _{\cM ^{\circ}}} I\cM ^{\circ} &\ar[u]^{\rho_{T_{\cX}}}  \ar[l] IT_{\cX}. } \]
Here we use facts that  $I\cX ^2 \cong I \cX \ti I \cX$ and $T_{I\cX}  \overset{}{\cong}  IT_{\cX}$ by  Lemma \ref{lem: IT TI}.

Let  \[ \pi _{\cX}  : T_{\cX} \to \cX \text{ and } \pi _{I\cX} :  T_{I\cX} \to I\cX \]  be the projections from vector bundles.
Then LHS of \eqref{eqn: diag dec pf} becomes
\begin{equation}\label{1} \int _{N_{I\cX/ I\cX ^2 }} \pi_{I\cX}^* (\gamma '') (\tch (\LL \rho ^*_{T_\cX} \delta_* \cO _{\cX})) \cdot   \pi_{I\cX}^* (\widetilde{\td}_{I\cX} )^2 ; \end{equation}
by the Tor independence of the pair $(\cX \ti \PP ^1, \cM ^{\circ} \ti _{\PP ^1} p)$ over $\cM ^{\circ}$ for $p=0, \infty$ and the base change II in \S~\ref{subsub: basic prop}; 
for details see the proof of \cite[\S~3.3]{Kim}.
Let $\sigma $ be the diagonal section of the vector bundle $\pi _{\cX}^* T_\cX$ on $T_{\cX}$ and let $\Kos (\sigma)$ denote the Koszul complex
associated to the section $\sigma$. Then \eqref{1} becomes
\begin{equation*} \int _{N_{I\cX/ I\cX ^2 }} \pi_{I\cX}^* (\gamma '')    (\tch ( \rho ^*  _{T_{\cX}}\mathrm{Kos} (\sigma))) \cdot   \pi_{I\cX}^* (\widetilde{\td}_{I\cX})^2,  \end{equation*}
which equals, by the functoriality and the projection formula \S~\ref{subsub: basic prop},
\begin{equation}\label{2} \int _{I\cX }  \left( ( \gamma '' \cdot \widetilde{\td} (T_{I\cX})   \int _{\pi_{I\cX}} (\tch ( \rho ^*_{T_{\cX}}  \mathrm{Kos} (\sigma))) \cdot   \pi_{I\cX}^* (\widetilde{\td}_{I\cX} ) \right) . \end{equation}
Let $I\sigma$ be the diagonal section of the vector bundle $\pi_{I\cX} ^* T_{I\cX} $ on $T_{I\cX}$.
From the short exact sequence in \S~\ref{subsub: ses rho TX},
we have a short exact sequence
\begin{align} \label{eqn: ses 2}  0 \to \pi_{I\cX} ^*  T_{I\cX} \xrightarrow{\iota} \pi_{I\cX} ^*(  T_{\cX} |_{I\cX}) \to \pi_{I\cX} ^*  N_{I\cX / \cX} \to 0 ; \\
\nonumber  \text{    with }  \iota (I\sigma) = \pi_{I\cX}^* \sigma        \end{align}
and an equality
\begin{equation}\label{eqn: fixed part of T} T_{\cX} |_{I\cX} ^{\rm{fixed}} = T_{I\cX} . \end{equation}
Then \eqref{2} becomes, by \eqref{eqn: ses 2} \& \eqref{eqn: fixed part of T}, 
\begin{equation*} \int _{I\cX}   \left( (  \gamma '' \cdot  \widetilde{\td} (T_{I\cX}) )   \int _{\pi_{I\cX}} \ch  (\mathrm{Kos} (I\sigma) ) \pi_{I\cX}^* \td (T_{I\cX}) \right)   \end{equation*}
which becomes, by \S~\ref{subsub: comp}, 
\begin{equation*} \int _{I\cX} (-1)^{\binom{\dim _{I\cX}+1}{2}}  \gamma '' \cdot  \widetilde{\td} (T_{I\cX}). \end{equation*}
This completes the proof. 
\end{proof}

\subsection{Proof of Theorem \ref{thm: HRR}}
This follows from Theorems \ref{thm: cat HRR} and  \ref{thm: can pair}.

\subsection{GRR}\label{sub: GRR}
Consider a proper morphism $f: \cX \to \cY$ with $f^* v = w$ as in \S~\ref{sub: geom real}.
Let $K_0 (\cA)$ be the Grothendieck group of the homotopy category of a pretriangulated dg category $\cA$.
Denote $f_! : K_0 (\MFdg (\cX, w) ) \to K_0 (\MFdg (\cY , v) ) $ be the homomorphism in the Grothendieck groups 
induced from $\RR f_*$. 

\begin{Thm} \label{Thm: GRR body} {\em (=Theorem~\ref{Thm: GRR})}
The diagram 
\begin{equation*} \xymatrix{ K_0 (\MFdg (\cX, w) ) \ar[d]_{\Ch_{HH}} \ar[rrr]^{f_{!}} & & & K_0 (\MFdg (\cY , v )) \ar[d]^{\Ch_{HH}} \\ 
\ar[d]_{I_{HKR}}  HH_* (\MFdg (\cX, w)) \ar[rrr]^{HH(\RR f_*)}  & & & HH_* (\MFdg (\cY , v ) )  \ar[d]^{I_{HKR}}  \\
                                            \HH ^{-*} (I\cX, (\Omega ^{\bullet}_{I\cX}, -dw|_{I\cX})) \ar[rrr]_{  \int _{If}  (-1)^{\dim_{If}} \cdot \wedge \widetilde{\td} (T_{If}) } & & 
                                            &  \HH ^{-*} (I\cY, (\Omega ^{\bullet}_{I\cY}, -dv|_{I\cY})) } \end{equation*}
is commutative.  Here $\widetilde{\td} (T_{If}) : = \widetilde{\td} (T_{I\cX})  / If^* \widetilde{\td} (T_{I\cY})$ and $\dim_{If} = \dim _{I\cX}  -  \dim _{I\cY} $,
where $\widetilde{\td} (T_{?})$ is $\widetilde{\td}$ for $T_?$ in \eqref{eqn: alpha}. 
\end{Thm}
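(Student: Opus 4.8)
The plan is to split the displayed diagram into its two squares, dispatch the upper one by naturality, and concentrate all the geometry in the computation, under the HKR isomorphisms, of the categorical Chern character of the graph factorization $\cO_{\Gamma_f}$ on $(\cX\ti\cY,-w\boxplus v)$. For the upper square: by Lemma~\ref{lem: geom diag}(1) the dg functor $T_{\cO_{\Gamma_f}}\colon\Perf(\cA)\to\Perf(\cB)$ is a dg enhancement of $\RR f_*$, so on Grothendieck groups $f_!$ is $[P]\mapsto[T_{\cO_{\Gamma_f}}(P)]$. Since $\Ch_{HH}$ is natural under dg functors and additive on exact triangles (hence descends to $K_0$), commutativity of the upper square is just the identity $HH(T_{\cO_{\Gamma_f}})(\Ch_{HH}(P))=\Ch_{HH}(T_{\cO_{\Gamma_f}}(P))$, together with the compatibility of $\Ch_{HH}$ with the Yoneda/Morita embeddings $\cA\hookrightarrow\Perf(\cA)$, $\cB\hookrightarrow\Perf(\cB)$. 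So it remains to identify $HH(\RR f_*)=HH(T_{\cO_{\Gamma_f}})$ after transport by $I_{HKR}$, i.e.\ the lower square.

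For the lower square I would apply Proposition~\ref{prop: TM} to the perfect bimodule $M=\Psi(\cO_{\Gamma_f})\in\Perf(\cA^{op}\ot\cB)$: writing $\Ch_{HH}(M)=\sum_i\gamma_i\ot\gamma^i$ under the K\"unneth decomposition $HH_*(\Perf(\cA^{op}\ot\cB))\cong HH_*(\cA^{op})\ot HH_*(\cB)$ (compatibly with HKR as in \eqref{diag: iso}, via the duality functor $D$), one gets $HH(T_M)(\sigma)=\sum_i\lan\sigma,\gamma_i\ran_{can}\gamma^i$, where $\lan\ ,\ \ran_{can}$ is the canonical pairing of $\cA^{op}$. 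By Theorem~\ref{thm: can pair}, applied to the proper LG model $(\cX,-w)$ (note $I\cX$, $T_{I\cX}$, $N_{I\cX/\cX}$ do not see the sign of $w$), this pairing is the explicit residue pairing $\int_{I\cX}(-1)^{\binom{\dim_{I\cX}+1}{2}}\,\cdot\wedge\cdot\wedge\widetilde{\td}(T_{I\cX})$ on twisted Hodge cohomology. At this point the whole statement reduces to a formula for the two K\"unneth components of $\Ch_{HH}(\cO_{\Gamma_f})\in\HH^*(I\cX,(\Omega^\bullet_{I\cX},dw))\ot\HH^*(I\cY,(\Omega^\bullet_{I\cY},-dv))$, plus bookkeeping with the pairing above.

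The core computation is thus $\Ch_{HH}(\cO_{\Gamma_f})$, for which I would use Theorem~\ref{thm: HKR and Ch}: on $I(\cX\ti\cY)\cong I\cX\ti I\cY$ this equals $\tr\big(\can\,\exp(\hat{\at}(\cO_{\Gamma_f}|_{I\cX\ti I\cY}))\big)$. The graph $\Gamma_f\colon\cX\to\cX\ti\cY$ is a regular closed immersion ($\cY$ being separated) with $N_{\Gamma_f}\cong f^*T_{\cY}$, compatibly with inertia, so that $I\Gamma_f\colon I\cX\to I\cX\ti I\cY$ has normal bundle $\rho_{\cX}^*f^*T_{I\cY}$. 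I would then run the deformation to the normal cone of $\Gamma_f$ exactly as in the proof of Theorem~\ref{thm: can pair} in \S~\ref{sub: Pf can}: replace $\cO_{\Gamma_f}$ by the Koszul complex of the tautological section on the total space of $N_{\Gamma_f}$, use Tor-independence and base change (\S~\ref{subsub: basic prop}) to push the computation onto $N_{\Gamma_f}$, split off the moving parts of $N_{I\cX/\cX}$ and $N_{I\cY/\cY}$ (producing the $\tch(\lambda_{-1}(N^\vee))$ denominators, hence the $\widetilde{\td}$'s of \eqref{eqn: alpha}), and reduce to the untwisted Chern/Todd identity of \S~\ref{subsub: comp}. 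Feeding the result into $\sum_i\lan\sigma,\gamma_i\ran_{can}\gamma^i$ and collapsing the sum via the non-degeneracy and diagonal-decomposition properties of the $I\cX$-pairing (Corollary~\ref{cor: char prop}) should yield $\int_{If}(-1)^{\dim_{If}}\sigma\wedge\big(\widetilde{\td}(T_{I\cX})/If^*\widetilde{\td}(T_{I\cY})\big)=\int_{If}(-1)^{\dim_{If}}\sigma\wedge\widetilde{\td}(T_{If})$, the bottom arrow.

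The hard part will be this last step: running the deformation-to-the-normal-cone argument for the \emph{non-diagonal} local immersion $\Gamma_f$ in the stacky, $dw$-twisted setting while keeping precise control of (i) the canonical automorphism $\can$ on $I\cX\ti I\cY$ and its interaction with the Koszul complex, (ii) the moving/fixed decompositions of $N_{I\cX/\cX}$ and $N_{I\cY/\cY}$ that yield the $\widetilde{\td}$-normalization and the cancellation producing $\widetilde{\td}(T_{If})$, and (iii) the sign conventions — the two factors $(-1)^{\binom{\dim_{I\cX}+1}{2}}$ and $(-1)^{\binom{\dim_{I\cY}+1}{2}}$ from the residue pairings, together with the Koszul sign, must collapse to the single factor $(-1)^{\dim_{If}}$ with $\dim_{If}=\dim_{I\cX}-\dim_{I\cY}$. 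The remaining technical points — verifying Tor-independence for the relevant fiber squares over the deformation stack, and that $\int_{If}$ obeys the projection formula and base change invoked above — are parallel to \S~\ref{sub: Pf can} and \S~\ref{subsub: basic prop}.
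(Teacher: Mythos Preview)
Your proposal is correct and follows essentially the same route as the paper: dispatch the upper square by naturality, then for the lower square combine Proposition~\ref{prop: TM} applied to $\Psi(\cO_{\Gamma_f})$ with the explicit canonical pairing of Theorem~\ref{thm: can pair}, and compute the resulting expression via deformation to the normal cone of $\Gamma_f$ and the Koszul identity of \S~\ref{subsub: comp}. The only cosmetic difference is the endgame: rather than ``collapsing the sum via the diagonal-decomposition property'' (Corollary~\ref{cor: char prop} concerns the diagonal, not $\Gamma_f$, so is not directly applicable), the paper simply pairs the putative identity against an arbitrary test class $\beta\in\HH^*(I\cY,(\Omega^\bullet_{I\cY},dv))$ using the $I\cY$ canonical pairing and then computes the resulting number as a single integral over $I\cX\times I\cY$; non-degeneracy on $I\cY$ finishes.
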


\begin{proof} The proof is parallel to that of Theorem 3.6 of \cite{Kim}. The upper rectangle is clearly commutative. We show the commutativity of the lower rectangle.
For  $\gamma  \in HH_* (\MFdg (\cX, w)) $ let $\ka := I_{HKR} (\gamma )$,  $\ka ' :=  I_{HKR} (HH (\RR f_* ) (\gamma )) $ and 
let \[ \ch (\Psi ( \cO _{\Gamma _f}))   = \sum_i T^i \ot T_i \in   \HH ^{*} (I\cX, (\Omega ^{\bullet}_{I\cX}, dw|_{I\cX}))  \ot   \HH ^{*} (I\cY, (\Omega ^{\bullet}_{I\cY}, -dv|_{I\cY})) . \]
then by  Proposition \ref{prop: TM} and Theorem \ref{thm: can pair} we have 
for  $\beta \in   \HH ^{-*} (I\cY, (\Omega ^{\bullet}_{I\cY}, dv|_{I\cY}))$
\begin{equation}\label{eqn: Rf 1}
 \int _{I\cY}  \ka '   \wedge \beta \wedge \widetilde{\td} (T_{I\cY}) = \sum _i  \int _{I\cX} (-1)^{{\dim _{I\cX}+1}\choose{2}}  \ka \wedge T^i \wedge  \widetilde{\td} (T_{I\cX}) \int _{I\cY} T_i \wedge \beta \wedge  \widetilde{\td} (T_{I\cY}) . \end{equation}
 Let $\pi$ denote the projection $If^*T_{I\cY}  \to I\cX$ and  let $s$ be the diagonal section
of $\pi^*If^*T_{I\cY}$ on $If^*T_{I\cY} $. Then
the deformation argument for $\Gamma _f : \cX \to \cX \ti \cY$ as in the proof of Theorem \ref{thm: can pair} shows that
\begin{align*} &  \mathrm{RHS} \text{ of } \eqref{eqn: Rf 1}  \\
& =  \int _{I\cX\ti I\cY }  (-1)^{{\dim _{I\cX}+1}\choose{2}}  (\ka   \ot \beta) \wedge \ch (\cO ^{w\boxminus v}_{\Gamma _f }) \wedge ( \widetilde{\td} (T_{I\cX}) \ot  \widetilde{\td} (T_{I\cY})) \\
& =  \int _{f^*T_{I\cY}}(-1)^{{\dim _{I\cX}+1}\choose{2}}  \pi ^* (\ka \wedge If^* \beta  \wedge \widetilde{\td} (If^* T_{I\cY}) \wedge   \widetilde{\td} (T_{I\cX}) )  \wedge \ch (\mathrm{Kos} (s))  \\
& =  \int _{I\cX} (-1)^{{\dim _{If}+1}\choose{2}}  \ka \wedge f^* \beta \wedge  \widetilde{\td} (T_{I\cX})  =  \int _{I\cY} (\int _{If} (-1)^{{\dim _{If}+1}\choose{2}}  \ka \wedge \widetilde{\td} (T_{I\cX})) ) \wedge \beta .\end{align*} 
This completes the proof.
\end{proof}

\begin{Rmk}
We briefly discuss how the GRR for $\Delta$ would compute the canonical pairing, which shows 
some relationship between GRR and the canonical paring.

Consider the Riemann-Roch map 
 \begin{align*} \ch ^{\tau}:  K_0 (\cX , w)  & \to  \HH ^* (I\cX, (\Omega ^{\bullet}_{I\cX} , -dw)) \\
 E  &\mapsto  \ch _{HH} (E)  \widetilde{\td} (T_{I\cX}).  \end{align*}
Suppose that we have a GRR type theorem for the diagonal map $\Delta : \cX \to \cX ^2$:
\begin{equation}\label{eqn: GRR Delta}  \Delta _* \ch ^{\tau} (\cO _X) = \ch ^{\tau} (\Delta _* \cO _{\cX})  =  \frac{\td (I\cX ^2) \ch_{HH}  (\Delta _* \cO _{\cX})}{\ch_{tw} (N_{I\cX ^2/\cX ^2})} . \end{equation}
This yields a formula
\begin{align*} \ch_{HH}(\Delta _* ( \cO _{\cX}))  = \Delta _* \left( \frac{ \ch_w(N_{I\cX/ \cX})  }{ \td (I\cX) } \ch _{HH} (\cO _{\cX} )\right) 
                                 =  \Delta _*  \frac{ \ch_w(N_{I\cX/ \cX})  }{ \td (I\cX) },  \end{align*}
                                since $\ch _{HH} (\cO _{\cX} ) = 1.$
Denote $ \widetilde{\td} = \widetilde{\td} {T_{I\cX}} $. 
Then 
\begin{multline*}  \int _{I\cX} (-1)^{\binom{\dim _{I\cX}+1}{2}}  \gamma \cdot t^i  \cdot \widetilde{\td} \   \int _{I\cX} (-1)^{\binom{\dim _{I\cX}+1}{2}}  t_i \cdot \gamma'   \cdot \widetilde{\td}  
\\  = \int _{I\cX ^2} \gamma \ot \gamma '  \cdot  \Delta _*  \frac{ \ch_w(N_{I\cX/ \cX})  }{ \td (I\cX) }  \cdot  \widetilde{\td} \ot  \widetilde{\td}  
 = \int _{I\cX} (-1)^{\binom{\dim _{I\cX}+1}{2}}  \gamma \cdot \gamma ' \cdot \widetilde{\td} , \end{multline*}
which is the characterization property of the canonical pairing.
Thus \eqref{eqn: GRR Delta} implies that the canonical pairing 
is $\int _{I\cX} (-1)^{\binom{\dim _{I\cX}+1}{2}}  \cdot \wedge \cdot \wedge \widetilde{\td} (T_{I\cX})$. 

\end{Rmk}

\section{Pushforward in Hodge cohomology}\label{sec: push}

\subsection{A functor $f^!$ and its base change}
In \ref{sub: Pf can}, we implicitly use the existence of the right adjoint functor $f^!$ of $\RR f_*$ and its base change formula.
In order to simplify notations, all functors in this subsection are considered as derived functors unless stated otherwise. 
    \begin{Thm}(See \cite{Ni})
    \label{f^!}
    Let $f: \cX \to \cY$ be a proper morphism between Deligne-Mumford stacks and let $F \in D^+_c(\cX)$ and $G\in D^+(\cY)$. Then there exists a right adjoint functor $f^!$ of $f_*$ such that the composition 
    \[ f_*  \sHom_\cX (F, f^! G) \xrightarrow{\mathrm{nat}} \sHom_\cY(f_* F,  f_* f^! G) \xrightarrow{\tr_f} \sHom_\cY ( f_* F, G)\]
    is an isomorphism. 
    Here, the map $\tr_f$ is the counit of the adjoint pair $ f_* \dashv f^!$. 
    \end{Thm}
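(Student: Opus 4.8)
The plan is to first produce the functor $f^!$ abstractly, as a right adjoint of $\RR f_*$, and then to reduce the asserted duality isomorphism of $\sHom$-complexes to the ordinary adjunction isomorphism by localizing on $\cY$. For the first point I would invoke Brown representability: a finite type separated Deligne--Mumford stack over $k$ has a compactly generated derived category $D_{qc}(\cX)$ of quasi-coherent sheaves, and a proper morphism $f$ is quasi-compact, quasi-separated and of finite cohomological dimension (finite stabilizers and $\mathrm{char}\,k=0$), so $\RR f_*\colon D_{qc}(\cX)\to D_{qc}(\cY)$ preserves small coproducts; Neeman's theorem then yields a right adjoint $f^!\colon D_{qc}(\cY)\to D_{qc}(\cX)$ whose counit is the trace $\tr_f\colon \RR f_*f^!\to\mathrm{id}$. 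Properness and the boundedness of the cohomological dimension of $f$ guarantee that $\RR f_*$ preserves $D^+_c$ and that $f^!$ preserves $D^+$ and coherence of cohomology on bounded-below complexes, so the discussion applies to the $F\in D^+_c(\cX)$, $G\in D^+(\cY)$ of the statement. Taking derived global sections of the adjunction yields the natural isomorphism $\RR\Hom_\cX(F,f^!G)\cong\RR\Hom_\cY(\RR f_*F,G)$, and unwinding the counit shows it is implemented by composition with $\tr_f$.

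Next I would upgrade this to the sheaf level. Denote by $\theta_{F,G}\colon \RR f_*\sHom_\cX(F,f^!G)\to\sHom_\cY(\RR f_*F,G)$ the composite appearing in the theorem. It is functorial in $F,G$, and by flat base change for $\RR f_*$ and $\sHom$ (using the boundedness hypotheses), flat base change for $f^!$, and compatibility of $\tr_f$ with base change, it is compatible with restriction along any \'etale $\cV\to\cY$. Hence the question whether $\theta_{F,G}$ is a quasi-isomorphism is \'etale-local on $\cY$, so I may assume $\cY=[\Spec B/H]$ with $H$ a finite group. Since $\mathrm{char}\,k=0$, the global-sections functor on $\cY$ is exact and conservative on quasi-coherent sheaves (invariants agree with coinvariants, as recalled in the excerpt), hence conservative on $D^+_{qc}(\cY)$; applying $\RR\Gamma(\cY,-)$ to $\theta_{F,G}$ and using $\RR\Gamma(\cY,\RR f_*\sHom_\cX(-,-))=\RR\Hom_\cX(-,-)$ and $\RR\Gamma(\cY,\sHom_\cY(-,-))=\RR\Hom_\cY(-,-)$ recovers exactly the adjunction isomorphism $\RR\Hom_\cX(F,f^!G)\cong\RR\Hom_\cY(\RR f_*F,G)$ of the previous step. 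Therefore $\theta_{F,G}$ is a quasi-isomorphism, which is the assertion.

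The main obstacle is not a single computation but the bookkeeping of the stacky Grothendieck duality formalism that the above uses as input: flat base change for $f^!$ over Deligne--Mumford stacks under the precise boundedness and coherence hypotheses, the projection formula in this setting, and the identification of the abstractly defined trace $\tr_f$ with the geometric trace maps employed elsewhere in the paper (e.g.\ $\tr_{I\cX}$ entering Theorem~\ref{thm: HRR}). These are precisely the statements established in \cite{Ni}, and the theorem follows from them; the role of the argument above is only to organize their application.
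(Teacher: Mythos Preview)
The paper does not supply its own proof of this theorem: it is stated with the attribution ``(See \cite{Ni})'' and then used as input for the subsequent base-change discussion. So there is nothing in the paper to compare your argument against beyond the citation to Nironi.

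That said, your sketch contains a concrete error. In the second step you claim that for $\cY=[\Spec B/H]$ with $H$ finite and $\mathrm{char}\,k=0$, the functor $\RR\Gamma(\cY,-)$ is conservative on $D^+_{qc}(\cY)$. This is false: take $B=k$, $H=\ZZ/2$, and let $M$ be the sign representation placed in degree $0$; then $\RR\Gamma(\cY,M)=M^{H}=0$ while $M\ne 0$. The fact that invariants agree with coinvariants yields \emph{exactness} of $(-)^{H}$, not conservativity. Hence the passage from ``$\RR\Gamma(\cY,\theta_{F,G})$ is an isomorphism'' to ``$\theta_{F,G}$ is an isomorphism'' breaks down.

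The repair is easy within your own framework: rather than applying $\RR\Gamma(\cY,-)$, pull back along the \'etale cover $p\colon \Spec B\to\cY$, which \emph{is} conservative. By the base-change compatibility of $\theta$ that you already invoke, $p^*\theta_{F,G}$ identifies with the analogous map $\theta'$ for $f'\colon \cX\times_{\cY}\Spec B\to\Spec B$; now the target is an affine scheme, $\RR\Gamma(\Spec B,-)$ is conservative, and $\RR\Gamma(\Spec B,\theta')$ is the global adjunction isomorphism you produced in step one via Brown representability. With this correction the outline is coherent, though, as you yourself concede in the final paragraph, the genuine inputs---flat base change for $f^!$ and $\tr_f$, and the fact that $f^!$ preserves $D^+$ and coherence---are exactly what \cite{Ni} establishes, so your argument is really an organization of that reference rather than an independent proof.
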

Suppose that we have a tor-independent Cartesian diagram of DM stack 
    \begin{equation}
    \label{cartesian diagram}
    \xymatrix
    { \cX' \ar[r]^v \ar[d]^g &  \cX  \ar[d]^f \\
    \cY' \ar[r]^u & \cY   }
    \end{equation}
such that $f$ is proper. Because it is tor-independent, we have a base change isomorphism 
    \begin{equation}
    \label{tor-independent base change}
    \sigma: g_*v^* \xrightarrow{\sim} u^*f_*
    \end{equation}
We also have a unit and counit map from the adjoint pair $f_*\dashv f^!$
    \begin{equation}
    \label{unit counit}
    \epsilon_f: \mathrm{Id} \to f^!f_* , \hskip 0.2cm \eta_f (= \tr_f) : f_*f^! \to \mathrm{Id}.
    \end{equation}
In this setup, we want to prove  the following base change formula for Grothendieck duality  which is well-known schemes. We prove a minor  extension for Deligne-Mumford stacks under a mild assumption on $u$.
    \begin{Prop} (See \cite{LH})
    \label{duality base change}
    Further assume that the map $u$ in \ref{cartesian diagram} is representable affine of finite-Tor dimension. 
    Then the canonical base change
    \[\beta: v^*f^! \xrightarrow{\epsilon_f} g^!g_*v^*f^! \xrightarrow{\sigma} g^!u^*f_*f^! \xrightarrow{\tr_f} g^!u^*\]
    is an isomorphism. 
    \end{Prop}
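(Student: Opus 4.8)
The plan is to reduce the statement to the analogous fact for schemes, for which I may invoke \cite{LH}, by descending along a suitable \'etale (or smooth) atlas. Since $u$ is representable, for any scheme $S$ with a smooth surjection $S \to \cY'$ the fiber product $S\times_{\cY'}\cX'$ is again a stack of the same type, and one can further pick smooth atlases and arrange compatible Cartesian squares; the point is that all four functors $v^*, f^!, g^!, u^*$ commute with the relevant smooth-pullback functors, essentially by construction of $f^!$ via \cite{Ni} (it is characterized by the trace isomorphism in Theorem~\ref{f^!}, which is local on the target in the smooth topology) and by flat base change for the ordinary pullbacks. Because being an isomorphism in the derived category can be checked after pulling back along a smooth cover, it suffices to verify that $\beta$ is carried to the corresponding canonical base-change map for a Cartesian square of schemes, and then quote the scheme-level result.

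\textbf{Key steps, in order.} First I would record that $f^!$ exists and is characterized by the trace isomorphism of Theorem~\ref{f^!}, and note that this characterization is compatible with smooth base change on $\cY$: if $\cY_1 \to \cY$ is smooth and $f_1: \cX_1 \to \cY_1$ is the base change, then $f_1^!(-) \cong (\text{smooth pullback})\circ f^!$ canonically, because the trace map $\tr_{f_1}$ is obtained from $\tr_f$ by pullback (this uses the tor-independence, automatic here since the cover is flat). Second, I would unwind the definition of $\beta$ as the composite displayed in the statement and observe that each of its three constituent arrows ($\epsilon_f$, $\sigma$, $\tr_f$) is natural with respect to smooth base change — the unit/counit of an adjunction pull back to the unit/counit of the base-changed adjunction, and $\sigma$ is the tor-independent base-change isomorphism \eqref{tor-independent base change}, whose formation is local. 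Third, I would choose a smooth surjection from a scheme $Y' \to \cY'$, form the induced tower of Cartesian squares, observe that the hypothesis on $u$ (representable affine of finite Tor-dimension) is preserved, and conclude that $\beta$ pulled back to the scheme level is exactly the classical canonical base-change map, which is an isomorphism by \cite{LH}. Fourth, since smooth pullback is conservative on (the relevant bounded-below, quasi-coherent-cohomology) derived categories and detects isomorphisms, $\beta$ itself is an isomorphism.

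\textbf{Main obstacle.} The delicate point is not any single computation but the bookkeeping of compatibilities: one must check that the \emph{construction} of $f^!$ from \cite{Ni} — which produces $f^!$ only via the abstract trace isomorphism, not by an explicit formula — genuinely commutes with smooth pullback on the target, and that the canonical map $\beta$ is the one that descends. Concretely, the hardest part will be verifying that the counit $\tr_f: f_*f^! \to \mathrm{Id}$ and its base change $\tr_g$ fit into a commuting square with $\sigma$, since $f^!$ is only pinned down up to the characterizing isomorphism; this is where the representability and finite-Tor-dimension hypotheses on $u$ are used, to guarantee that $u^*$ interacts well with $\sHom$ and with the trace. Once these coherences are in place, the reduction to \cite{LH} is formal. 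An alternative to descent, which I would mention as a remark, is to repeat the \cite{LH} argument directly in the stacky setting using that both $v^*f^!$ and $g^!u^*$ are right adjoints (on appropriate subcategories) to explicitly comparable functors built from $f_*$, $v_*$, $g_*$, $u_*$; but the descent route seems shorter given that the scheme case is already available.
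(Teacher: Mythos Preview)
Your descent-to-schemes strategy has a genuine gap. Base-changing the square along a smooth atlas of $\cY'$ (or of $\cY$) replaces $\cY$ and $\cY'$ by schemes, since $u$ is representable; but $\cX$ and $\cX'$ remain DM stacks, because $f$ is not assumed representable. You acknowledge this (``a stack of the same type'') and then say one can ``further pick smooth atlases and arrange compatible Cartesian squares,'' but this is exactly where the argument breaks: passing to an atlas $X\to\cX$ destroys properness of the vertical map $X\to Y$, so the right adjoint $f^!$ in the form supplied by Theorem~\ref{f^!} is no longer available, and there is no Cartesian square of schemes with a proper left vertical to which \cite{LH} applies. Patching this would require either an independent proof of smooth base change for $f^!$ on DM stacks (a cousin of the very statement you are proving, and not covered by the affine hypothesis here), or importing the full compactification formalism for $(-)^!$ along non-proper maps of stacks---neither of which you set up.

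The paper takes precisely the route you list as an ``alternative'' at the end: it runs the \cite{LH} argument directly for DM stacks, using only Theorem~\ref{f^!}. The key observation is that $u$, hence $v$, is affine, so $v_*$ is conservative; thus it is enough to show $v_*\beta$ is an isomorphism. Via the adjunction $v^*\dashv v_*$ and Yoneda this reduces to showing that a sheafified comparison map
\[
\tilde\beta:\; g_*\sHom_{\cX'}(v^*F,\,v^*f^!G)\ \longrightarrow\ \sHom_{\cY'}(g_*v^*F,\,u^*G)
\]
is an isomorphism. Two small commutative diagrams do this: one identifies $\tilde\beta$ with $g_*\beta$ followed by the duality isomorphism of Theorem~\ref{f^!} for $g$, and the other compares $\tilde\beta$ with $u^*$ applied to the duality isomorphism for $f$, using that $u^*\sHom_\cY(F,G)\to\sHom_{\cY'}(u^*F,u^*G)$ is an isomorphism because $u$ has finite Tor-dimension. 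No descent to schemes is needed.
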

    \begin{proof}
    Observe that $u$, and hence $v$, is \textit{reflexive}, which means that if $v_*\phi$ is an isomorphism, then so is $\phi$. 
    Therefore, $\beta$ is an isomorphism if $v_*\beta: v_*v^*f^! \to v_*g^!u^*$ is an isomorphism. 
    Applying Yoneda lemma and adjunction formula for $v^*\dashv v_*$, it is enough to prove that for $F\in D^+_c(\cX), G\in D^+(\cY)$, the induced map 
    \[\beta: \Hom_{\cX'}(v^*F, v^*f^!G)\longrightarrow \Hom_{\cX'}(v^*F, g^!u^*G)\]
    is an isomorphism. 
    Consider the following commutative diagram of sheaves on $\cY'$;
    \begin{equation}
    \label{GD diagram I}
    \xymatrix
    {g_*\sHom_{\cX'}(v^*F, v^*f^!G) \ar[dr]_{\tilde \beta} \ar[r]^{g_*\beta}    &   g_*\sHom_{\cX'}(v^*F, g^!u^*G)  \ar[d]_{\sim}^{\ref{f^!}} \\
    &   \sHom_{\cY'}(g_*v^*F, u^*G)}
    \end{equation}
    The morphism $\tilde \beta$ is an instance of a sheaf version of base change morphism. It is defined as a composition
    \begin{multline}
    \tilde \beta_{(F', G)} : g_*\sHom_{\cX'}(F', v^*f^!G) \rightarrow \sHom_{\cX'}(g_*F', g_*v^*f^!G)\\
    \xrightarrow{\sigma} \sHom_{\cY'}(g_*F,, u^*f_*f^!G) \xrightarrow{\tr_f} \sHom_{\cY'}(g_*F,' u^*G).
    \end{multline}
    for each $F'\in D^+_c(\cY), G\in D^+(\cY)$. Then $\tilde \beta = \tilde \beta_{(v^*F, G)}$. One can easily check that it is an isomorphism from the following commutative diagram;
    \begin{equation}
    \label{GD diagram II}
    \xymatrix
    {g_*\sHom_{\cX'}(v^*F, v^*f^!G) \ar[d]_{\sim} \ar[r]^{ \tilde \beta}    &   \sHom_{\cY'}(g_*v^*F, u^*G)  \ar[d]_{\sim}^{\sigma} \\
    g_*v^*\sHom_\cX(F, f^!G) \ar[d]_{\sim}^{\sigma}     &   \sHom_{\cY'}(u^*f_*F, u^*G) \ar[d]_{\sim}\\
    u^*f_*\sHom_\cX(F, f^!G) \ar[r]_{\sim}^{\ref{f^!}}      &   u^*\sHom_{\cY}(f_*F, G).}
    \end{equation}
    Here, the vertical isomorphisms on the top left and bottom right follows from the fact that the natural morphism of sheaves
    \[u^*\sHom_{\cY}(F, G)\to \Hom_{\cY'}(u^*F, u^*G)\]
    is an isomorphism because $u$ (and hence, $v$) is of finite tor-dimension. 
    \end{proof}
We apply \ref{duality base change} to the deformation to the normal cone. 

\subsubsection{Some basic properties}\label{subsub: basic prop}
 In this section all stacks are assumed to be smooth separated DM stacks of finite type over $k$.
Let $f: \cX \to \cY$ be a morphism. Assume that they are pure dimensional and let $d$ be $\dim \cX - \dim \cY$.

\begin{Def}
Once we have the right adjoint functor $f^!$ of $\RR f_*$, as in \cite{Kim} we can define 
\[ \int _f: H ^{q}_{\sharp _1} (\cX, \Omega ^p_{\cX}) \to H^{q-d}_{\sharp _2} (\cY, \Omega _{\cY}^{p-d}) \]
where $(\sharp_1, \sharp_2)$ is either $(c, c)$ or $(cf, \emptyset)$. 
When $\cY$ is  $\Spec\,k$, write $\int _{\cX}$ for $\int _f$.
\end{Def}

\begin{Rmk} In the construction of $\int_f$, Nagata's compactification and the resolution of singularities were used. In our separated DM stack setup both are known by
\cite{Rydh} and \cite{Tem}, respectively.  
\end{Rmk}

The following can be straightforwardly proven as in \cite[\S~3.6]{Kim}.
\begin{enumerate}
\item (Base change I) Consider a Cartesian diagram \eqref{cartesian diagram}. Assume that $f$ is a flat, proper and l.c.i morphism. 
Then 
\begin{equation*}\label{base change I}
\int _{\cX '} v^* (\gamma ) = u^* (\int _f \gamma ). \end{equation*}

\item (Base change II) Consider a Cartesian diagram \eqref{cartesian diagram}. 
Assume that $f$ is a flat morphism, $\cY$ is a connected $1$-dimensional smooth scheme,
and $u$ is the embedding of a closed point $\cY'$ of $\cY$.
Then 
\begin{equation*}\label{base change II}
\int _{\cX '} v^* (\gamma ) = u^* (\int _f \gamma ) \in k . \end{equation*}

\item (Functoriality) Let $\cX \xrightarrow{f} \cY \xrightarrow{g} \cZ$ be morphisms. 
Then \[
\int _{g \circ f} = \int _g \circ \int _f . \]

\item (Projection formula)
Let $\cX \xrightarrow{f} \cY$ be a morphisms.
Then \[ \int _f (f^* \sigma \wedge \gamma) = \sigma \wedge \int _f \gamma \]
for $\gamma \in H^d_{cf} (\cX, \Omega ^d_{\cX})$ and $\sigma \in H^q (\cY, \Omega _{\cY}^p)$.

\end{enumerate}

\subsubsection{Computation}\label{subsub: comp}  
Let $E$ be a vector bundle on $\cX$ of rank $n$, let $\pi : E \to \cX$ be the projection, and let $s$ be the diagonal section of $\pi ^*E$.
Since $\pi$ is representable, 
we have \[ \int _{\pi} \ch (\Kos (s)) \td (\pi ^* E) = (-1)^{{n+1}\choose{2}} \]
by the base change I in \S~\ref{subsub: basic prop} and the computation of \cite[\S~3.6.6]{Kim}.

\end{document}